\documentclass[11pt]{article}

\usepackage{amsmath, amsthm, amssymb}
\usepackage{enumerate}

\usepackage{tikz}
\usetikzlibrary{arrows}

\date{}

\title{\vspace{-0.8cm}Getting a directed Hamilton cycle two times faster}
\author{
Choongbum Lee \thanks{Department of Mathematics, UCLA, Los
Angeles, CA, 90095. Email: choongbum.lee@gmail.com. Research
supported in part by Samsung Scholarship.}
\and
Benny Sudakov \thanks{Department of Mathematics, UCLA, Los Angeles, CA 90095.
Email: bsudakov@math.ucla.edu. Research supported in part by NSF grant
DMS-1101185, NSF CAREER award DMS-0812005 and by USA-Israeli BSF grant. }
\and
Dan Vilenchik \thanks{Department of Mathematics, UCLA, Los Angeles, CA 90095.
Email: vilenchik@math.ucla.edu.
}
}

\oddsidemargin  0pt
\evensidemargin 0pt
\marginparwidth 40pt
\marginparsep 10pt
\topmargin -10pt
\headsep 10pt
\textheight 8.7in
\textwidth 6.7in

\theoremstyle{plain}
\newtheorem{THM}{Theorem}[section]
\newtheorem{PROP}[THM]{Proposition}
\newtheorem{LEMMA}[THM]{Lemma}
\newtheorem{DEF}[THM]{Definition}
\newtheorem{COR}[THM]{Corollary}
\newtheorem{CLAIM}[THM]{Claim}

\theoremstyle{definition}

\newcommand{\BBE}{\mathbb{E}}
\newcommand{\BFP}{\mathbb{P}}

\newcommand{\ALG}{{\bf Orient}}
\newcommand{\ALGPRIME}{{\bf OrientPrime}}
\newcommand{\BIP}{{\bf BIP}}
\newcommand{\FIVEINOUT}{D_{5-in,5-out}}

\newcommand{\VTXONEDEG}[2]{d_1(#1)}
\newcommand{\VTXFREETWODEG}[2]{d_{2}(#1)}   
\newcommand{\PHASETWODEG}[2]{d_{AB}(#1)}
\newcommand{\OUT}{\textrm{\textit{OUT}}}
\newcommand{\IN}{\textrm{\textit{IN}}}

\newcommand{\RV}{RV}

\newcommand{\HAM}{\mathcal{HAM}}
\newcommand{\whp}{whp}

\newcommand{\Erdos}{Erd\H{o}s}
\newcommand{\Renyi}{R\'enyi}

\newcommand{\Bollobas}{Bollob\'as}

\newcommand{\Komlos}{Koml{\'o}s}

\newcommand{\Szemeredi}{Szemer\'edi}

\begin{document}
\maketitle

\begin{abstract}
Consider the random graph process where we start with an empty graph
on $n$ vertices, and at time $t$, are given an edge $e_t$ chosen
uniformly at random among the edges which have not appeared so far.
A classical result in random graph theory asserts that $\whp$ the
graph becomes Hamiltonian at time $(1/2+o(1))n \log n$. On the
contrary, if all the edges were directed randomly, then the graph has a
directed Hamilton cycle $\whp$ only at time $(1+o(1))n \log n$. In
this paper we further study the directed case, and ask whether it is
essential to have twice as many edges compared to the undirected
case. More precisely, we ask if at time $t$, instead of a random
direction one is allowed to choose the orientation of $e_t$, then
whether it is possible or not to make the resulting directed graph Hamiltonian
at time earlier than $n \log n$. The main result of our paper
answers this question in the strongest possible way, by asserting that one
can orient the edges on-line so that $\whp$, the resulting graph has
a directed Hamilton cycle exactly at the time at which the
underlying graph is Hamiltonian.
\end{abstract}

\section{Introduction}

The celebrated \emph{random graph process}, introduced by \Erdos~and
\Renyi~\cite{ErdRen} in the 1960's, begins with an empty graph on
$n$ vertices, and in every round $t=1,\ldots,m$ adds to the
current graph a single new
edge chosen uniformly at random out of all missing edges.
This distribution is commonly denoted as $G_{n,m}$.
An equivalent ``static" way of defining $G_{n,m}$ would be: choose
$m$ edges uniformly at random out of all $\binom{n}{2}$ possible
ones. One advantage in studying the random graph process, rather
than the static model, is that it allows for a higher resolution
analysis of the appearance of monotone graph properties (a graph
property is monotone if it is closed under edge addition).

A \emph{Hamilton cycle} of a graph is a simple cycle that passes through
every vertex of the graph, and a graph containing a Hamilton cycle
is called \emph{Hamiltonian}. Hamiltonicity is one of the most fundamental
notions in graph theory, and has been intensively studied in various
contexts, including random graphs. The earlier results on
Hamiltonicity of random graphs were obtained by P\'osa~\cite{Posa},
and Korshunov \cite{Korshunov}. Improving on these results,
\Komlos~and \Szemeredi~\cite{KomSze} proved that if $m' =
\frac{1}{2}n \log n + \frac{1}{2} \log \log n + c_n n$, then
\[ \lim_{n \rightarrow \infty} \BFP(G_{n,m'} \, \textrm{is Hamiltonian}) =
 \left\{ \begin{array}{cll}
  & 0                   & \textrm{if }c_n \rightarrow -\infty \\
  & e^{-e^{-2c}} &  \textrm{if }c_n \rightarrow c \\
  & 1                   &   \textrm{if }c_n \rightarrow \infty.
  \end{array}\right.
 \]
One obvious necessary condition for the graph to be Hamiltonian is
for the minimum degree to be at least 2, and surprisingly, the
probability of $G_{n,m'}$ having minimum degree two at time ${m'}$
has the same asymptotic behavior as the probability of it being
Hamiltonian. \Bollobas~\cite{Bollobas} strengthened this observation
by proving that $\whp$ the random graph process becomes Hamiltonian when the last
vertex of degree one disappears. Moreover, \Bollobas, Fenner, and
Frieze \cite{BolFenFri} described a polynomial time algorithm which
$\whp$ finds a Hamilton cycle in random graphs.

Hamiltonicity has been studied for directed graphs as well.
Consider a {\em random directed graph process} where at time $t$ a
random directed edge is chosen uniformly at random among all missing
edges. and let $D_{n,m}$ be the graph consisting of the first $m$
edges. Frieze \cite{Frieze} proved that for $m'' = n \log n + c_n
n$, the probability of $D_{n,m''}$ containing a (directed) Hamilton
cycle is
\[ \lim_{n \rightarrow \infty} \BFP(D_{n,m''} \, \textrm{is Hamiltonian}) =
 \left\{ \begin{array}{cll}
  & 0                   & \textrm{if }c_n \rightarrow -\infty \\
  & e^{-2e^{-c}} &  \textrm{if }c_n \rightarrow c \\
  & 1                   &   \textrm{if }c_n \rightarrow \infty.
  \end{array}\right.
 \]
Similar to the undirected case, this probability has the same
asymptotic behavior as the probability of the directed graph having
minimum in-degree and out-degree 1. In fact, Frieze proved
\cite{Frieze} that when the last vertex to have in- or out-degree
less than one disappears, the graph has a Hamilton cycle $\whp$.

Hamiltonicity of various other random graph models has also been
studied \cite{RobWor, BalBolKriMulWal}. One model which will be of
particular interest to us is the $k$-in $k$-out model, in which every vertex
chooses $k$ in-neighbors and $k$-out neighbors uniformly at random
and independently of the others. Improving on several previous
results, Cooper and Frieze \cite{CooFri} proved that a random graph
in this model is Hamiltonian $\whp$ already when $k=2$ (which is
best possible since it is easy to see that a 1-in 1-out random graph
is $\whp$ not Hamiltonian).

\subsection{Our Contribution}

\Bollobas~\cite{Bollobas}, and Frieze's \cite{Frieze} results
introduced above suggest that the main obstacle to
Hamiltonicity of random graphs lies in ``reaching'' certain minimum
degree conditions. It is therefore natural to ask how the thresholds
change if we modify the random graph process so
that we can somehow bypass this obstacle.

We consider the following process suggested by Frieze \cite{Frieze2}
which has been designed for this purpose.
Starting from the empty graph, at time $t$, an undirected edge
$(u,v)$ is given uniformly at random out of all missing edges, and a
choice of its orientation ($u \to v$ or $v \to u$) is to be made at
the time of its arrival. In this process, one can
attempt to accelerate the appearance of monotone directed graph
properties, or delay them, by applying an appropriate on-line algorithm.
It is important to stress that the process
is {\em on-line} in nature, namely, one cannot see
any future edges at the current round and is forced to make the
choice based only on the edges seen so far.
In this paper, we investigate the property of containing a directed
Hamilton cycle by asking the question, ``can one speed up the appearance
of a directed Hamilton cycle?''. The best we can hope for is to
obtain a directed Hamilton cycle at the time when the underlying
graph has minimum degree 2. The following result asserts that
directed Hamiltonicity is in fact achievable exactly at that time,
and this answers the above question positively in the
strongest possible way.

\begin{THM} \label{thm_mainthm2}
Let $\mathcal{G}$ be a random (undirected) graph process
that terminates when the last vertex of degree one disappears.
There exists an on-line algorithm $\ALG$ that orients the edges of
$\mathcal{G}$, so that the resulting directed graph is Hamiltonian
$\whp$.
\end{THM}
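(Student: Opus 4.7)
The plan is to split the random graph process $\mathcal{G}$ into two phases at a checkpoint $t_1$ lying well inside the main body of the process, and to treat ``ordinary'' and ``exceptional'' vertices differently on-line. Concretely, set $t_1 = (\tfrac12 - \epsilon) n \log n$ for a small constant $\epsilon > 0$ and a threshold $k = \Theta(\log \log n)$. Let $\VTXONE$ denote the set of vertices whose degree in $\mathcal{G}$ at time $t_1$ is at most $k$, and $\VTXTWO$ its complement. A Chernoff-plus-coupon-collector estimate shows that \whp{} $|\VTXONE| = n^{o(1)}$ and no two vertices of $\VTXONE$ are adjacent throughout the process; in particular, every edge touching $\VTXONE$ lies in $\VTXONE \times \VTXTWO$.

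During Phase 1, for $t \leq t_1$, the algorithm $\ALG$ orients each arriving edge independently uniformly at random. I would then prove two properties of the resulting partial digraph, \whp: the orientation induced on $\VTXTWO$ contains, as a subdigraph, a copy of the Cooper--Frieze $\FIVEINOUT$ random digraph on $\VTXTWO$; and every $v \in \VTXTWO$ already has $|\IN(v)|, |\OUT(v)| = \Theta(\log n)$, yielding strong two-sided directed expansion of all linear-size sets inside $\VTXTWO$.

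During Phase 2 ($t_1 < t \leq T$, with $T$ the termination time of $\mathcal{G}$) the algorithm becomes \emph{targeted}. For an arriving edge $e_t$ incident to a vertex $w \in \VTXONE$ that is still missing an in-edge (resp.\ out-edge), orient $e_t$ towards (resp.\ away from) $w$; for edges with both endpoints in $\VTXTWO$, orient uniformly at random. Since $|\VTXONE|$ is tiny, the number of such ``forced'' Phase-2 orientations is $o(n)$, so the Phase-2 edges among $\VTXTWO$ preserve the random-like structure built in Phase 1. A simple lower bound on how many edges each $w \in \VTXONE$ receives by time $T$ then guarantees that every $w$ acquires at least one in- and one out-edge, so the final digraph meets the necessary minimum-degree condition.

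To extract a directed Hamilton cycle, I would first invoke (an extension of) Cooper and Frieze's result on $\FIVEINOUT$ to produce a directed Hamilton cycle $C$ on the $\VTXTWO$-part using only Phase-1 edges, and then insert each $w \in \VTXONE$ into $C$ via its Phase-2 in-neighbor $u$ and out-neighbor $x$ using \Posa-style rotations. The hard part will be this last insertion step: the pair $(u,x)$ attached to $w$ is revealed on-line and could in principle concentrate on a small portion of $C$, so a careful analysis of the available rotations, combined with a union bound over the $|\VTXONE|$ exceptional vertices and a simultaneous matching of each $w$ to a distinct arc of $C$ that can be rerouted through it, is where the bulk of the technical work will lie.
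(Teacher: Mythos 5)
Your proposal has a fatal gap at its very first step: orienting all Phase-1 edges uniformly at random. The hitting time $m_*$ is, by definition, the moment the last degree-one vertex disappears, and at that moment the graph whp contains $\Theta(\log n)$ vertices of final degree exactly $2$. Conditioned on having final degree $2$, the two edge-arrival times of such a vertex are essentially uniform in $[1,m_*]$, so a constant fraction $\bigl((1-2\epsilon)^2-o(1)\bigr)$ of these vertices receive \emph{both} of their edges before your checkpoint $t_1=(\tfrac12-\epsilon)n\log n$. Such a vertex gets no Phase-2 edge at all, so your targeted Phase-2 rule never fires for it, and your ``simple lower bound on how many edges each $w\in\VTXONE$ receives by time $T$'' is simply false for these vertices. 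Each of them is oriented badly (both edges in, or both edges out) with probability exactly $1/2$, independently across the (whp pairwise non-adjacent) vertices in question, so with probability $1-2^{-\Theta(\log n)}=1-o(1)$ the final digraph has a vertex of in-degree or out-degree $0$ and is not Hamiltonian. This is not a repairable proof gap but a failure of the algorithm itself. It is precisely the difficulty the paper is organized around: its orientation rule alternates in/out deterministically with respect to a designated endpoint \emph{from the very first edge} (Step I, continued carefully into Step II via the rules for neglected edges and ``bud'' vertices), exactly so that every vertex of degree $\ge 2$ is guaranteed one in-edge and one out-edge no matter when its edges arrive. Any scheme that randomizes early orientations and defers the fix-up to a late phase cannot achieve the hitting-time result (though, as the paper's concluding remarks note, it could plausibly give the weaker bound $t^*=(1+\varepsilon)n\log n/2$, where no bud-type vertices exist).

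Two secondary points. First, your size estimate $|\VTXONE|=n^{o(1)}$ is wrong for constant $\epsilon$: the number of vertices of degree at most $\Theta(\log\log n)$ at time $(\tfrac12-\epsilon)n\log n$ is $n^{2\epsilon+o(1)}$, i.e.\ polynomial in $n$ (your non-adjacency claim still holds for $\epsilon<1/4$, but the ``tiny set'' intuition driving the rest of the argument needs revisiting). Second, the claim that a uniformly random orientation restricted to $\VTXTWO$ ``contains a copy of the Cooper--Frieze $\FIVEINOUT$ digraph'' is not a black-box application: in that model each vertex chooses its in- and out-neighbours independently and uniformly, whereas here the candidate neighbours are constrained by the underlying graph process and by which endpoint controlled the orientation. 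The paper spends most of its technical effort (the bipartite graph $\BIP$, the permutation-invariance lemma, and the compression argument) precisely on substituting for this missing symmetry; your proposal would need an analogous argument rather than a citation.
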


Let us remark that $\mathcal{G}$ $\whp$ contains $(1+o(1))n \log
n/2$ edges, in contrast with $(1+o(1))n \log n$ edges in the random
directed graph model. Thus the required number of random edges is
reduced by half.


Our model is similar in spirit to the so called Achlioptas process.
It is well known that a giant connected component (i.e. a component
of linear size) appears in the random graph
$G_{n,m}$ when $m=(1+o(1))n/2$. Inspired by the celebrated ``power
of two choices" result \cite{Azar94balancedallocations},
Achlioptas posed the following question: Suppose that edges arrive
in pairs, that is in round $t$ the pair of edges $(e_t,e'_t)$ chosen
uniformly at random is given, and one is allowed to pick an edge out
of it for the graph (the other edge will be discarded). Can one
delay the appearance of the giant component? Bohman and Frieze
answered this question positively \cite{bohman01avoidingGiant} by
describing an algorithm whose choice rule allows for the ratio $m/n
\geq 0.53$, and this ratio has been improved since \cite{BohFriWor}.
Quite a few papers have thereafter studied various related problems that arise in
the above model \cite{BohKra, FlaGamSor, KriLohSud, SinVil, SpeWor}.
As an example, in \cite{KriLohSud}, the authors studied the question,
``How long can one delay the appearance of a certain fixed
subgraph?''.

One such paper which is closely related to our work
is the recent work of Krivelevich, Lubetzky, and
Sudakov \cite{KriLubSud}. They studied the Achlioptas
process for Hamiltonicity, and proved that by exploiting the ``power
of two choices'', one can construct a Hamilton cycle at time
$(1+o(1))n \log n /4$, which is twice as fast as in the random case.
Both our result and this result suggest that the ``bottleneck'' to
Hamiltonicity of random graphs indeed lies in the minimum degree, and
thus these results can be understood in the context of complementing
the results of
\Bollobas~\cite{Bollobas}, and Frieze \cite{Frieze}.

\subsection{Preliminaries}

The paper is rather involved technically. One factor that
contributes to this is the fact that we are establishing the
``hitting time" version of the problem. That is, we determine the
exact threshold for the appearance of a Hamilton cycle. The analysis
can be simplified if one only wishes to estimate this threshold
asymptotically (see concluding remarks). To make the current
analysis more approachable without risking any significant
change to the random model, we consider the following variant of the
graph process, which we call the {\em random edge process} :
at time $t$, an edge
is given as an ordered pair of vertices $e_t = (v_t, w_t)$ chosen
uniformly at random, with repetition, from the set of all possible
$n^2$ ordered pairs (note that this model allows loops and repeated
edges). In what follows, we use $G_t$ to denote the graph induced by
the first $t$ edges, and given the orientation of each edge, use
$D_t$ to denote the directed graph induced by the first $t$ edges.
By $m_*$ we denote the time $t$ when the last vertex of degree one
in $G_t$ becomes a degree two vertex.

We will first prove that there exists an on-line algorithm $\ALG$
which $\whp$ orients the edges of the graph $G_{m_*}$ so that
the directed graph $D_{m_*}$ is Hamiltonian, and then in
Section \ref{sec:MainThmMod} show how Theorem \ref{thm_mainthm2} can
be recovered from this result.

\subsection{Organization of the Paper}
In the next section we describe the algorithm $\ALG$ that is used to
prove Theorem \ref{thm_mainthm2} (in the modified model). Then in
Section \ref{section_outline} we outline the proof of Theorem
\ref{thm_mainthm2}. Section \ref{sec:TypicalProperties} describes
several properties that a typical random edge process possesses.
Using these properties we prove Theorem \ref{thm_mainthm2} in
Section \ref{section_findingham}. Then in Section
\ref{sec:MainThmMod}, we show how to modify the algorithm $\ALG$, in
order to make it work for the original random graph process.

\medskip

\noindent \textbf{Notation.} A \emph{directed 1-factor} is a directed graph
in which every vertex has in-degree and out-degree exactly 1, and a
\emph{1-factor} of a directed graph is a spanning subgraph which is a directed
1-factor. The function $\exp(x) := e^{x}$ is the exponential
function. Throughout the paper $\log(\cdot)$ denotes the natural
logarithm. For the sake of clarity, we often omit floor and ceiling
signs whenever these are not crucial and make no attempts to
optimize our absolute constants. We also assume that the order $n$
of all graphs tends to infinity and therefore is sufficiently large
whenever necessary.

\section{The Orientation Rule}
\label{section_algorithm}

In this section we describe the algorithm $\ALG$. Its input is the
edge process ${\bf e} = (e_1, e_2, \ldots, e_{m_*})$, and output is
an on-line orientation of each edge $e_t$. The algorithm proceeds in two steps.
In the first step, which consists
of the first $2 n \log \log n$ edges, the algorithm builds a ``core"
which contains almost all the vertices, and whose edges are distributed (almost)
like a 6-in 6-out random graph. In the second step, which contains
all edges that follow, the remaining $o(n)$ non-core vertices are
taken care of, by being connected to the core in a way that
will guarantee $\whp$ the existence of a directed Hamiltonian cycle.

\subsection{Step I} \label{subsection_orientationrule1}
Recall that each edge is given as an ordered pair $(v,w)$. For every
vertex $v$ we keep a count of the number of times that $v$ appears
as the first vertex. We update the set of \emph{saturated} vertices,
which consists of the vertices which appeared at least 12 times as
the first vertex. Given the edge $(v,w)$ at time $t$, if $v$ is
still not saturated, direct the edge $(v,w)$ alternatingly with
respect to $v$ starting from an out edge (by alternatingly we mean,
if the last edge having $v$ as the first vertex was directed as an
out edge of $v$, then direct the current one as an in edge of $v$,
and vice-versa. For the first edge we choose arbitrarily the out
direction). Otherwise, if $v$ is saturated, then count the number of
times that $w$ appeared as a second vertex when the first vertex is
already saturated, and direct the edges alternatingly according to
this count with respect to $w$ starting from an in edge. This
alternation process is independent to the previous one. That is, even
if $w$ appeared as a first vertex somewhere before, the count should
be kept track separately from it.

For a vertex $v \in V$, let the \emph{first vertex degree} of $v$ be
the number of times that $v$ appeared as a first vertex in Step I,
and denote it as $\VTXONEDEG{v}{}$. Let the \emph{second vertex
degree} of $v$ be the number of times that $v$ appeared in Step I
as a second vertex of an edge whose first vertex is already
saturated, and denote it as $\VTXFREETWODEG{v}{}$. Note that the sum
of the first vertex degree and second vertex degree of $v$ is not
necessarily equal to the degree of $v$ in Step I as $v$ might
appear as a second vertex of an edge whose first vertex is not yet
saturated. We will call such an edge a \emph{neglected edge} of $v$.

\subsection{Step II} \label{subsection_orientationrule2}
Let $A$ be the set of saturated vertices at the end of Step I, and
$B=V\setminus A$. Call an edge an $A$-$B$ edge if one end point lies
in $A$ and the other end point lies in $B$, and similarly define
$A$-$A$ edges and $B$-$B$ edges. Given an edge $e=(v,w)$ at time
$t$, if $e$ is an $A$-$B$ edge, and w.l.o.g. assume that $v \in B$ and
$w \in A$, then direct $e$ alternatingly with respect to $v$, where
the alternation process of Step II continues the one from Step I
as follows:
\begin{enumerate}
  \setlength{\itemsep}{1pt} \setlength{\parskip}{0pt}
  \setlength{\parsep}{0pt}
\item If $v$ appeared as a first vertex in Step I at least once, then pick up where the alternation
process of $v$ as a first vertex in Step I stopped and continue the alternation.
\item If $v$ did not appear as a first vertex in Step I but did appear as a second vertex of an already saturated vertex,
then pick up where the alternation process of $v$ as a second vertex of a saturated vertex stopped in Step I and continue the alternation.
\item If $v$ appeared in Step I but does not belong to the above two cases, then consider the first neglected edge connected to $v$,
and start the alternation process from the opposite direction of this edge.
\item If none of the above, then start from an out edge.
\end{enumerate}
 Otherwise, if $e$ is an
$A$-$A$ edge or a $B$-$B$ edge, orient it uniformly at random. Note
that unlike Step~I, the order of vertices of the given edge
does not affect the orientation of the edge in Step II.

For a vertex $v \in B$, let the \emph{$A$-$B$ degree} of $v$ be the
number of $A$-$B$ edges incident to $v$ in Step II, and denote it
as $\PHASETWODEG{v}{}$. For $v \in A$, let $\PHASETWODEG{v}{} = 0$.


\section{Proof Outline}
\label{section_outline}

Our approach builds on Frieze's proof of the Hamiltonicity of the random
directed graph process \cite{Frieze} with some additional ideas.
His proof consists of two phases (the original proof consists of
three phases, but for simplicity, we describe it as two phases). We shall first
describe these two phases of Frieze's proof, and then point out the modifications
that are necessary to accommodate our different setting. Let $m = (1+o(1))n \log n $ be the
time at which the random directed graph process has
minimum in-degree and out-degree 1, and let $D_{n,m}$ be the
directed graph at time $m$
(throughout this section we say that random directed graphs
have certain properties if they have the properties $\whp$).

\subsection{Phase 1 : Find a small 1-factor}

In Phase 1, a 1-factor of $D_{n,m}$ consisting of at most $O(\log
n)$ cycles is constructed. To this end, a subgraph $\FIVEINOUT$ of
$D_{n,m}$ is constructed which uses only a small number of the
edges. Roughly speaking, for each vertex, use its first 5
out-neighbors and 5 in-neighbors (if possible) to construct $\FIVEINOUT$. Note
that the resulting graph will be similar to a random 5-in 5-out
directed graph, but still different as some vertices will only have
1 in-neighbor and 1 out-neighbor even at time $m$. Finally,
viewing $\FIVEINOUT$ as a bipartite graph $G'(V\cup V^*,E')$, where
$V^*$ is a copy of $V$, and $\{u,v^*\}\in E'$ iff $u \to v$ belongs to
$\FIVEINOUT$, one proves that $G'$ has a perfect matching. It turns
out that this matching can be viewed as a uniform random permutation of the
set of vertices $V$. A well known fact
about such permutations is that they $\whp$ consist of at most
$O(\log n)$ cycles.

\subsection{Phase 2 : Combining the cycles into a Hamilton cycle}

In Phase 2, the cycles of the 1-factor are combined into a
Hamilton cycle. The technical issue to overcome in this step is the
fact that in order to construct $\FIVEINOUT$, all of the edges were
scanned, and now supposedly we have no remaining random edges in the
process to combine the cycles of the 1-factor. However, note that
since $\FIVEINOUT$ consists of at most $10n$ edges, the majority of
edges need not be exposed. More rigorously, let $LARGE$ be the
vertices whose degree is $\Omega(\log n / \log \log n)$ at time
$t_0=2n \log n /3$ in the directed graph process. For the $LARGE$
vertices, its 5 neighbors in $\FIVEINOUT$ will be determined solely
by the edges up to time $t_0$, leaving the remaining edges (edges
after time $t_0$) of the process unexposed. Two key properties
used in Phase 2 are that $\whp$, $(a)$ $|LARGE|=n-o(n^{1/2})$,
and $(b)$ every cycle of the 1-factor contains many $LARGE$
vertices. Note that by $(a)$, out of the remaining $n\log n /3$ edges,
all but $o(1)$-fraction will connect two $LARGE$ vertices. Phase 2
can now be summarized by the following theorem \cite{Frieze}.

\begin{THM} \label{thm_friezephase23}
Let $V$ be a set of $n$ vertices and $L \subset V$ be a subset of
size at least $n - o(n^{1/2})$. Assume that $D$ is a directed
1-factor over $V$ consisting of at most $O(\log n)$ cycles, and the
vertices $V \setminus L$ are at distance at least 10 away from each
other in this graph.

If $(1/3-o(1))n\log n$ $L$-$L$ edges are given uniformly at random,
then $\whp$ the union of these edges and the graph $D$ contains a
directed Hamilton cycle.
\end{THM}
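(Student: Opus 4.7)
My plan is to prove Theorem~\ref{thm_friezephase23} by iteratively transforming the $1$-factor $D$ into a Hamilton cycle via a sequence of local swaps, each consuming a few of the random $L$-$L$ edges and reducing the number of cycles of the current $1$-factor by one. Since $D$ has $k = O(\log n)$ cycles and $|V\setminus L|=o(n^{1/2})$, and the distance-$10$ condition forces every non-$L$ vertex to be an interior point of a long $L$-only segment of its cycle, every cycle contains $\Omega(n/\log n)$ edges with both endpoints in $L$. These ``$L$-edges'' are the only $1$-factor edges I would ever modify, which means all non-$L$ vertices and their incident $D$-edges survive intact, and therefore the distance-$10$ hypothesis is preserved throughout.

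The basic merger of two cycles $C$ and $C'$ is a $2$-switch: locate $(a,b) \in E(C)$ and $(c,d) \in E(C')$ with $a,b,c,d \in L$ for which both random edges $a \to d$ and $c \to b$ are present, then replace $\{(a,b), (c,d)\}$ by $\{(a,d), (c,b)\}$ in the current $1$-factor. A direct trace of the resulting orientation shows that $V(C) \cup V(C')$ becomes the vertex set of a single new cycle while every other cycle is untouched.

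A direct first moment calculation gives only $\Theta(1)$ expected $2$-switches for a designated pair of cycles among the full random edge set, which is insufficient both for a single merger and for the $k-1$ iterations. The bulk of the technical work is to amplify this by a \Posa-type rotation argument adapted to the directed setting: one uses a portion of the random edges to perform a bootstrap rotation process that grows, starting from a single vertex $v \in C \cap L$, an exponentially large set $S$ of ``alternative predecessors of $v$'' realisable by intra-$C$ modifications, and symmetrically a large set $T$ of ``alternative successors'' of a fixed $v' \in C' \cap L$. The merger then only requires finding \emph{any} random edge between $S$ and $T$, paired with its companion second edge that also becomes abundant after rotation, which follows from a Chernoff bound on the number of random $L$-$L$ edges between two linear-sized vertex subsets.

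The principal obstacle I anticipate is controlling the adaptive dependencies in the rotation process: each rotation alters the current $1$-factor, so the candidate edges for the next rotation are themselves random, and one must argue that the rotation tree continues to double \whp\ even under this adaptive exposure. I would handle this via a deferred-decision argument, revealing the random edges in $O(\log n)$ independent blocks and applying a standard doubling lemma to each block; the distance-$10$ assumption enters here as well, since it ensures that the ``bad'' non-$L$ vertices never clog the rotations. After $k-1$ successful mergers, each failing with probability $o(1/\log n)$, a union bound shows that the final $1$-factor is a single Hamilton cycle, completing the proof.
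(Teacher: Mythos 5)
Your overall strategy --- merge cycles two at a time using pairs of random edges, amplify the supply of usable edge pairs by directed P\'osa-type rotations, and control dependencies by exposing the random edges in $O(\log n)$ independent blocks --- is essentially the strategy of the actual proof. But there is a genuine gap at the very first step: the claim that ``every cycle contains $\Omega(n/\log n)$ edges with both endpoints in $L$'' does not follow from the hypotheses. The theorem only bounds the \emph{number} of cycles by $O(\log n)$, so the \emph{average} cycle length is $\Omega(n/\log n)$, but individual cycles can have bounded length; indeed, in the intended application the 1-factor behaves like a uniform random permutation, which whp has cycles of length $1,2,3,\dots$. This false claim is load-bearing. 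For a short cycle $C'$ your first-moment count of 2-switches is $O\bigl(|C'|\cdot n\cdot(\log n/n)^2\bigr)=o(1)$, and, more importantly, the rotation process cannot grow an ``exponentially large set $T$ of alternative successors'' inside $C'$: rotations never leave $V(C')$, and $|V(C')|$ may be $O(1)$. So the symmetric ``rotate in $C$ and in $C'$, then find an $S$--$T$ edge'' merger fails precisely for the cycles that are hardest to absorb.

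The repair is to break the symmetry, which is what the paper does. First, as long as no cycle has length $n-o(n)$, the cycles can be split into two groups each covering $\Omega(n/\log^{1/2}n)$ vertices; a direct two-edge switch between the two groups (one edge from one block of random edges, its companion from the next block) then succeeds with probability $1-e^{-\Omega(\log^{1/2}n)}$ per round, with no rotations needed, and iterating produces a single cycle of length $n-o(n)$ plus $O(\log n)$ leftover cycles which may be short. Each leftover cycle is then absorbed by opening it into a path, spending one random edge to concatenate that path onto the giant cycle, and performing all rotations inside the resulting path of length $n-o(n)$ --- where the doubling argument has room to work --- before closing it into a cycle. (A further wrinkle you should note: in the directed setting each rotation costs \emph{two} random edges rather than one, so the endpoint set grows by a factor of roughly $\log^2 n$ per two-stage exposure round rather than doubling; this still terminates after $O(\log n/\log\log n)$ rounds.) If you reorganize your argument along these lines --- rotating only in the long merged path, never inside a short cycle --- the rest of your plan, including the block exposure, the preservation of the distance-10 condition by avoiding breaking points near $V\setminus L$, and the union bound over $O(\log n)$ mergers each failing with probability $o(1/\log n)$, goes through.
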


The proof of a slightly stronger version of Theorem \ref{thm_friezephase23}
will be given in Section \ref{sec:MainThmMod}.

\subsection{Comparing with our setting}
The main technical issue in this paper is to reprove Phase 1, namely,
the existence of a 1-factor with small number of cycles.
In \cite{Frieze}, the fact that
all vertices have the same distribution in $\FIVEINOUT$, led to an argument
showing the existence of a matching that translates into a uniform
random permutation. Our case is different because of the orientation
rule. We have different types of vertices each being oriented in a
different way, breaking the nice symmetry. The bulk of our technical
work is spent in resolving this technical issue.

Once this is done, that is after achieving the 1-factor, we
come up with an analogue of $LARGE$, which we call ``saturated".
Similarly as in Phase 2 described above, we prove that $\whp$
$(a')$ most of the vertices are saturated, and $(b')$ every cycle in
the 1-factor contains many saturated vertices. However, the naive
approach results in a situation where one cannot apply Theorem
\ref{thm_friezephase23} ($(a')$ and $(b')$ are quantitatively weaker
than $(a)$ and $(b)$). Thus we develop the argument of
``compressing'' vertices of a given cycle. This idea allows us to
get rid of all the non-saturated vertices, leading to another graph
which only has saturated vertices in it. Details will be given in
Section \ref{subsection_small1factor}. Once we apply the compression
argument, we can use Theorem \ref{thm_friezephase23} to finish the
proof. Let us mention that the compression argument can be applied
after Phase 1 in \cite{Frieze} as well to simplify the proof.

\section{A Typical Random Process}\label{sec:TypicalProperties}

The following well-known concentration result (see, for example
\cite[Corollary A.1.14]{AlSp}) will be used several times in the
proof. We denote by $Bi(n,p)$ the binomial random variable with
parameters $n$ and $p$.
\begin{THM}(Chernoff's inequality) If $X \sim Bi(n,p)$ and $\varepsilon>0$, then
\[ \BFP\big(|X - \mathbb{E}[X]| \geq \varepsilon \mathbb{E}[X]\big) \leq e^{-\Omega_\varepsilon(\mathbb{E}[X])}. \]
\end{THM}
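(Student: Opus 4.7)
The plan is to prove the bound by the classical exponential-moment (Bernstein) trick. Since $X \sim Bi(n,p)$, I would first write $X = X_1+\cdots+X_n$ with the $X_i$ independent Bernoulli$(p)$ variables, and, for an arbitrary parameter $\lambda>0$, apply Markov's inequality to the nonnegative random variable $e^{\lambda X}$:
\[
\BFP\bigl(X \ge (1+\varepsilon)\BBE[X]\bigr) \;\le\; \frac{\BBE[e^{\lambda X}]}{e^{\lambda (1+\varepsilon)\BBE[X]}} \;=\; \frac{(1-p+pe^{\lambda})^{n}}{e^{\lambda(1+\varepsilon)np}}.
\]
Independence of the $X_i$ gives the product form of the moment generating function, and the elementary inequality $1+x \le e^{x}$ yields $(1-p+pe^{\lambda})^{n} \le \exp\bigl(np(e^{\lambda}-1)\bigr)$, producing a one-parameter family of upper bounds indexed by $\lambda$.

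Next I would optimize in $\lambda$. The exponent $np(e^{\lambda}-1) - \lambda(1+\varepsilon)np$ is minimized at $\lambda = \log(1+\varepsilon)$, which gives the sharp bound
\[
\BFP\bigl(X \ge (1+\varepsilon)np\bigr) \;\le\; \left(\frac{e^{\varepsilon}}{(1+\varepsilon)^{1+\varepsilon}}\right)^{np}.
\]
A short calculus argument comparing $\varepsilon$ with $(1+\varepsilon)\log(1+\varepsilon)$ shows the base is strictly less than $1$ for every $\varepsilon>0$, so this is of the form $\exp(-c_{\varepsilon}\,np)$ for some $c_{\varepsilon}>0$ depending only on $\varepsilon$. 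The lower tail is handled symmetrically by choosing $\lambda<0$, specifically $\lambda = \log(1-\varepsilon)$ for $\varepsilon \in (0,1)$, which yields
\[
\BFP\bigl(X \le (1-\varepsilon)np\bigr) \;\le\; \left(\frac{e^{-\varepsilon}}{(1-\varepsilon)^{1-\varepsilon}}\right)^{np},
\]
and the elementary estimate $(1-\varepsilon)^{1-\varepsilon}e^{\varepsilon} \ge e^{\varepsilon^{2}/2}$ turns this into the cleaner $e^{-\varepsilon^{2}np/2}$. A union bound over the two tails then gives the claim with an exponent of order $\Omega_{\varepsilon}(\BBE[X])$.

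The only genuinely nontrivial step in this plan is the elementary but slightly fiddly calculus verifying that both bases above are strictly below $1$ on their respective ranges of $\varepsilon$ and extracting a positive constant $c_{\varepsilon}$; everything else is entirely mechanical and uses nothing specific to the random-graph setting. Because the theorem is stated with the soft $\Omega_{\varepsilon}(\cdot)$ notation, one does not even need an explicit expression for $c_{\varepsilon}$, so I expect no real obstacle. The only point worth flagging for the later use of the bound in the paper is that $c_{\varepsilon}$ deteriorates as $\varepsilon\to 0$, so when the inequality is applied to a concentration window of vanishing relative width, one should fall back on the familiar $\varepsilon^{2}/3$-type corollary extracted from the same computation rather than the raw statement.
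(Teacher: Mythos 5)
Your proof is correct and is the standard exponential-moment (Bernstein/Chernoff) argument; the paper itself does not prove this statement but simply cites it from Alon--Spencer, whose proof is the same computation. Nothing to add.
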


\subsection{Classifying Vertices}

To analyze the algorithm it will be convenient to work with three
sets of vertices. The first is the set of \textbf{saturated}
vertices at Step I. Throughout we will use $A$ to denote this set.
Let us now consider the non-saturated vertices $B=V\setminus A$.
Here we distinguish between two types. We say that $v \in B$
\textbf{blossoms} if there are at least 12 edges of the form
$\{v,A\}$ in Step II (by $A$ we mean an arbitrary vertex from $A$),
and let $B_1$ be the collection of vertices which blossom.  All the
remaining vertices are \textbf{restricted}, and is denoted by $B_2$.
Thus every vertex either is saturated ($A$), blossoms ($B_1$), or
is restricted ($B_2$).

Furthermore, the set of restricted vertices has two important
subclasses which are determined by the first vertex
degree $\VTXONEDEG{v}{}$, second vertex
degree $\VTXFREETWODEG{v}{}$, and $A$-$B$
degree $\PHASETWODEG{v}{}$ defined in the previous section. We say
that a restricted vertex $v$ \textbf{partially-blossoms} if the sum
of its first vertex degree, second vertex degree, and $A$-$B$ degree
is at least 2. Note that since we stopped the process when the graph
has minimum degree 2, every vertex $v$ has degree at least 2. Thus,
if the above mentioned sum is at most 1, then $v$ either has a
neglected edge, or a $B$-$B$ edge connected to it. A useful fact
that we prove in Lemma \ref{claim_exhastiveness} says that $\whp$
all such vertices $v$ have one $A$-$B$ edge (thus $\PHASETWODEG{v}{}
= 1$), and at least one neglected edge. Thus, we call a restricted
vertex $v$ not being partially-blossomed, and having one $A$-$B$
edge and at least one neglected edge as a \textbf{bud}.

\subsection{Properties of a Typical Random Process}

In this section we list several properties that hold $\whp$ for random edge processes. We will call
an edge process \textbf{typical} if indeed the properties hold. Let
$$m_1 = \frac{1}{2} n \log n + \frac{1}{2} n \log \log
n - n \log \log \log n, \qquad m_2 = \frac{1}{2} n \log n +
\frac{1}{2} n \log \log n + n \log \log \log n.$$ Note that for
a fixed vertex $v$, the probability of an edge being incident to $v$
is $\frac{2n-1}{n^2} = \frac{2}{n} - \frac{1}{n^2}$ (this is because
in our process, each edge is given by an ordered pair of vertices). However as it
turns out the small order term $\frac{1}{n^2}$ is always negligible for
our purpose, so we will use the probability $\frac{2}{n}$ for this
event, and remind the reader that the term
$\frac{1}{n^2}$ is omitted. Recall that the stopping time $m_*$ is the time
at which the last vertex of degree one becomes a degree two vertex and the
process stops.

\begin{CLAIM} \label{clm_stopping_time}
Let $m_*$ be the stopping time of the random process. Then $\whp$
$$m_1 \le m_*\le m_2.$$
\end{CLAIM}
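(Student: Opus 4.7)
The plan is the standard first and second moment method on the number of low-degree vertices. For each vertex $v$, let $d_t(v)$ denote its degree in $G_t$, and set $X_t := |\{v : d_t(v) \le 1\}|$. Since $X_t$ is nonincreasing in $t$, the stopping time $m_*$ is precisely the first $t$ with $X_t = 0$, so it suffices to prove that $X_{m_2} = 0$ whp (upper bound) and $X_{m_1} \ge 1$ whp (lower bound).

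For any fixed vertex $v$, each edge $e_s$ is incident to $v$ with probability $p = (2n-1)/n^2 = (2/n)(1+O(1/n))$, and these events are independent across $s$, so $d_t(v) \sim Bi(t,p)$. Hence
\[
\BBE[X_t] \;=\; n\bigl[(1-p)^t + tp(1-p)^{t-1}\bigr] \;=\; (1+o(1))\cdot 2t \cdot e^{-2t/n}
\]
for $t = O(n \log n)$. Plugging $t = m_2$ gives $2t/n = \log n + \log\log n + 2\log\log\log n$, so $\BBE[X_{m_2}] = \Theta(1/(\log\log n)^2) = o(1)$; Markov's inequality then yields $X_{m_2} = 0$ whp, proving $m_* \le m_2$.

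For the lower bound, plugging $t = m_1$ gives $\BBE[X_{m_1}] = \Theta((\log\log n)^2) \to \infty$, and I would conclude by the second moment method. The key observation is that for two distinct vertices $u,v$, the event $\{e_s \ni u, e_s \ni v\}$ occurs only for the two ordered pairs $(u,v),(v,u)$, so $\BFP[e_s \ni u, e_s \ni v] = 2/n^2 = p^2 + O(1/n^2)$. A routine case analysis splitting on $(d_t(u),d_t(v)) \in \{0,1\}^2$ then gives
\[
\BFP\bigl[d_t(u) \le 1,\; d_t(v) \le 1\bigr] \;=\; (1+o(1))\,\BFP[d_t(v) \le 1]^2
\]
uniformly for $t = O(n \log n)$; the only correction beyond the product of marginals comes from the $O(t/n^2)$-probability subcase in which a single edge $e_s = (u,v)$ or $(v,u)$ accounts for the entire joint degree, and this is negligible since $(1 + 2t/n)^2 = \Theta((\log n)^2)$. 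It follows that $\BBE[X_{m_1}^2] = (1+o(1))\BBE[X_{m_1}]^2$, and Chebyshev's inequality yields $X_{m_1} \ge 1$ whp, proving $m_* \ge m_1$.

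The only mildly delicate step is the bookkeeping of the $O(1/n^2)$ corrections in both the marginal and joint edge-incidence probabilities (coming from loops and from the $(u,v)/(v,u)$ double-count); none of them affect leading-order terms in our range of $t$. Apart from this, the argument is a direct adaptation to the ordered-pair-with-repetition model of the classical Bollob\'as-style hitting-time analysis for minimum degree $2$ in $G_{n,m}$.
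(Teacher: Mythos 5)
Your proposal is correct and follows essentially the same route as the paper: a first-moment/Markov argument at time $m_2$ for the upper bound and a second-moment/Chebyshev argument at time $m_1$ for the lower bound, with the same treatment of the $O(1/n^2)$ corrections as negligible. In fact the paper omits the second-moment details as ``standard,'' so your explicit identification of the shared-edge correction term is, if anything, slightly more complete than the paper's own sketch.
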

\begin{proof}
For a fixed vertex $v$, the probability of an edge being incident to
$v$ is about $\frac{2}{n}$. Hence the probability of $v$ having
degree at most 1 at time $m_2$ is,
\[ \left(1 - \frac{2}{n}\right)^{m_2} + {m_2 \choose 1} \frac{2}{n} \cdot \left(1 - \frac{2}{n}\right)^{m_2 - 1}
\le  3 \log n \cdot  e^{- \log n - \log \log n - 2 \log \log \log n
} = O\left(\frac{1}{n (\log \log n)^2}\right). \] Thus by Markov's
inequality, $\whp$ there is no vertex of degree at most 1 after
$m_2$ edges. This shows that $m_* \le m_2$. Similarly, the expected
number of vertices having degree at most 1 after seeing $m_1$
edges is $\Omega((\log \log n)^2)$, and by computing the second
moment of the number of vertices having degree at most 1, we can
show that after $m_1$ edges $\whp$ at least one such vertex exits.
This shows that $m_* \ge m_1$. The rest of the details are fairly
standard and are omitted.
\end{proof}

Next we are going to list some properties regarding the different
types of vertices.
\begin{CLAIM} \label{clm_sizes_of_A_B}
The number of saturated vertices satisfies $\whp$
$$|A| \ge n\left(1- \frac{(\log
\log n)^{12}}{\log^2 n}\right).$$
\end{CLAIM}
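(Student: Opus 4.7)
The plan is to compute the expected number of non-saturated vertices and apply Markov's inequality. Let $m_I := 2n \log \log n$ denote the number of edges in Step I. First I would note that by Claim \ref{clm_stopping_time} we have $m_* \ge m_1 \gg m_I$ whp, so Step I always completes, i.e., the first $m_I$ ordered pairs are indeed revealed.

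For a fixed vertex $v$, since each edge $e_t = (v_t, w_t)$ is a uniformly random ordered pair, the event ``$v_t = v$'' occurs independently across $t$ with probability $1/n$. Therefore the first vertex degree $\VTXONEDEG{v}{}$ accumulated during Step I is distributed as $\mathrm{Bi}(m_I, 1/n)$, with mean $2 \log \log n$. The vertex $v$ fails to be saturated iff $\VTXONEDEG{v}{} \le 11$, so I would estimate
\[
\BFP\bigl(v \notin A\bigr) \;=\; \sum_{k=0}^{11} \binom{m_I}{k} \Bigl(\tfrac{1}{n}\Bigr)^{k} \Bigl(1-\tfrac{1}{n}\Bigr)^{m_I - k}.
\]
The ratio between consecutive terms is $(m_I - k)/(k(n-1)) \approx (2\log\log n)/k$, which is at least $1$ for $k \le 11$, so the sum is dominated (up to a factor of $12$) by the $k=11$ term. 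Using $\binom{m_I}{11} \le (m_I)^{11}/11!$ and $(1-1/n)^{m_I - 11} = e^{-2\log \log n}(1+o(1)) = (1+o(1))/\log^2 n$, I get
\[
\BFP\bigl(v \notin A\bigr) \;\le\; \frac{12 \cdot 2^{11} (\log \log n)^{11}}{11!} \cdot \frac{1+o(1)}{\log^2 n} \;=\; O\!\left(\frac{(\log \log n)^{11}}{\log^2 n}\right).
\]

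By linearity of expectation, the expected size of $B = V \setminus A$ is at most $C \cdot n (\log \log n)^{11}/\log^2 n$ for an absolute constant $C$. Markov's inequality then gives
\[
\BFP\!\left(|B| \ge \frac{n(\log\log n)^{12}}{\log^2 n}\right) \;\le\; \frac{C}{\log \log n} \;=\; o(1),
\]
which is exactly the claim. There is no real obstacle here; the only mildly delicate point is verifying that the $k=11$ term of the binomial tail dominates, which is automatic because $11$ is well below the mean $2 \log \log n$, and checking that the $1/n^2$ correction (ordered pairs versus unordered edges, self-loops) is absorbed into the $1+o(1)$ factor.
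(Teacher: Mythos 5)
Your proposal is correct and follows essentially the same route as the paper: bound $\BFP(v \notin A)$ by the binomial tail $\sum_{k=0}^{11}\binom{2n\log\log n}{k}(1/n)^k(1-1/n)^{2n\log\log n-k} = O((\log\log n)^{11}/\log^2 n)$ and finish with Markov's inequality, gaining the extra factor of $\log\log n$ as slack. The additional observations you make (that Step I completes before $m_*$, and that the $k=11$ term dominates) are correct and only make the argument slightly more explicit than the paper's.
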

\begin{proof}
For a fixed vertex $v$, the probability of $v$ occurring as the
first vertex of an edge is (exactly) $\frac{1}{n}$, and thus the
probability of $v$ ending up non-saturated at Step I is at most
\begin{align*}
\sum_{k=0}^{11} {2 n \log \log n \choose k} \left( \frac{1}{n} \right)^k \cdot \left(1 - \frac{1}{n}\right)^{2 n \log \log n - k}
 \le \sum_{k=0}^{11} (2 \log \log n)^k \frac{1}{\log ^2 n} = O\left(\frac{(\log \log n)^{11}}{\log ^2 n}\right).
\end{align*}
The claim follows from Markov's inequality.
\end{proof}
Our next goal is to prove that the restricted vertices consist only of partially-blossomed and bud vertices.
For that we need the following auxiliary lemma.

\begin{CLAIM} \label{clm_bbedges}
Let $E_{BB}$ be the collection of all $B$-$B$ edges (in Step II). The graph $G_{m_*}\setminus E_{BB}$ has $\whp$ minimum degree 2.
\end{CLAIM}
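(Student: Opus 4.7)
The plan is a union bound over vertices. For $v \in A$ there is nothing to check, since no edge of $E_{BB}$ touches $v$ and $G_{m_*}$ has minimum degree two. So consider $v \in B$. The edges at $v$ surviving deletion of $E_{BB}$ are exactly (i) the Step~I edges incident to $v$, of which there are, say, $d_I(v)$, and (ii) the $A$-$B$ Step~II edges incident to $v$, of which there are $\PHASETWODEG{v}{}$. Thus the claim fails at $v$ only if $d_I(v) + \PHASETWODEG{v}{} \le 1$, which forces both $d_I(v) \le 1$ and $\PHASETWODEG{v}{} \le 1$. Note moreover that $d_I(v) \le 1$ implies $\VTXONEDEG{v}{} \le 1 < 12$, hence $v \in B$ automatically, so no separate conditioning on $v \in B$ is needed. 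Define
\[
F_v := \bigl\{d_I(v) \le 1\bigr\} \cap \bigl\{\PHASETWODEG{v}{} \le 1\bigr\};
\]
it suffices to show $\BFP\!\bigl[\bigcup_v F_v\bigr] = o(1)$.

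To bound $\BFP[F_v]$, I would work on the typical event $\mathcal{T}$ that $m_* \ge m_1$ (Claim~\ref{clm_stopping_time}) and $|A| \ge n\bigl(1 - (\log\log n)^{12}/\log^2 n\bigr)$ (Claim~\ref{clm_sizes_of_A_B}), both of which hold $\whp$. The crucial fact is that the edges of the random edge process are i.i.d.; in particular the Step~I edges (positions $1,\dots,2n\log\log n$) are independent of those at later positions, and conditioning on Step~I only fixes the set $A$. On $\mathcal{T}$ the Step~II prefix from position $2n\log\log n + 1$ to $m_1$ contains $m_1 - 2n\log\log n = (1/2-o(1))n\log n$ i.i.d.\ edges, each joining $v$ to $A$ with probability $2|A|/n^2 - O(1/n^2) = (2/n)(1 - o(1))$, so the mean is $Np = (1-o(1))\log n$. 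A standard binomial tail estimate then gives
\[
\BFP\!\bigl[\PHASETWODEG{v}{} \le 1 \,\big|\, \text{Step I},\,\mathcal{T}\bigr] \le (1+Np)\,e^{-Np} = O\!\left(\frac{\log n}{n}\right),
\]
while the Step~I degree $d_I(v) \sim \mathrm{Bi}\bigl(2n\log\log n,\, 2/n - O(1/n^2)\bigr)$ has mean $\approx 4\log\log n$, so the same tail estimate yields $\BFP[d_I(v)\le 1] = O\bigl(\log\log n/\log^4 n\bigr)$.

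Conditioning on Step~I and using the above,
\[
\BFP[F_v \cap \mathcal{T}] \le \BFP[d_I(v) \le 1]\cdot O(\log n / n) = O\!\left(\frac{\log\log n}{n\log^3 n}\right),
\]
and a union bound over the $n$ vertices yields $\BFP\!\bigl[\bigcup_v F_v \cap \mathcal{T}\bigr] = O\bigl(\log\log n/\log^3 n\bigr) = o(1)$. Combined with $\BFP[\mathcal{T}^c] = o(1)$, this finishes the proof. The only subtle point is the independence of $d_I(v)$ and $\PHASETWODEG{v}{}$ once Step~I is conditioned upon, which is immediate from the i.i.d.\ structure of the edge sequence; everything else is routine Poisson-tail estimation.
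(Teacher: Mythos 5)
There is a genuine gap, and it sits exactly at the point where you discarded the paper's key ingredient. The correct failure event at a vertex $v\in B$ is $\{d_I(v)+d_{AB}(v)\le 1\}$ \emph{together with} the fact, forced by the definition of $m_*$, that $v$ is incident to at least one $B$-$B$ edge (otherwise its degree in $G_{m_*}\setminus E_{BB}$ equals its degree in $G_{m_*}$, which is at least $2$, so there is nothing to prove). You replace this by the much larger event $F_v=\{d_I(v)\le1\}\cap\{d_{AB}(v)\le1\}$ and drop the $B$-$B$ edge entirely. That edge is not a dispensable side condition: each required $B$-$B$ edge costs an extra factor of roughly $m_2\cdot\frac{2}{n}\cdot\frac{|B|}{n}=O\bigl(\tfrac{(\log\log n)^{12}}{\log n}\bigr)$ (using $|B|\le \tfrac{(\log\log n)^{12}}{\log^2 n}n$ from Claim \ref{clm_sizes_of_A_B}), and this factor is precisely what pushes the per-vertex probability below $1/n$.

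Concretely, your union bound does not close. The estimate $\BFP[d_{AB}(v)\le1\mid\cdots]\le(1+Np)e^{-Np}=O(\log n/n)$ is incorrect: here $Np=\log n-3\log\log n-2\log\log\log n$, and the subtracted lower-order terms exponentiate to a factor $\log^3 n\,(\log\log n)^2$, so the correct bound is $O\bigl(\tfrac{\log^4 n\,(\log\log n)^2}{n}\bigr)$. In a hitting-time argument one cannot write $Np=(1-o(1))\log n$ and then treat $e^{-Np}$ as $O(1/n)$; the second- and third-order terms of $m_1$ are exactly what the claim hinges on. With the correct estimate, $\BFP[F_v\cap\mathcal{T}]=O\bigl(\tfrac{(\log\log n)^3}{n}\bigr)$, and $n\cdot\BFP[F_v\cap\mathcal{T}]=O((\log\log n)^3)\to\infty$. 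This is not merely a lossy bound: the expected number of vertices with $d_I(v)\le1$ and $d_{AB}(v)\le1$ up to time $m_1$ really is of order $(\log\log n)^{3}$ --- such vertices (for instance, partially-blossomed vertices with one Step I edge and one $A$-$B$ edge) are perfectly legal degree-$2$ vertices of $G_{m_*}\setminus E_{BB}$ and must not be counted as failures. To repair the argument you must reinstate the requirement that $v$ receive $k\ge1$ $B$-$B$ edges, condition on $|B|$ being small, and sum over $k$; that is exactly the structure of the paper's proof.
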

\begin{proof}
If the graph $G_{m_*}\setminus E_{BB}$ has minimum degree less than
2 for some edge process ${\bf e}$, then there exists a vertex $v$
which gets at most one edge other than a $B$-$B$ edge, and at least
one $B$-$B$ edge. By Claim \ref{clm_stopping_time}, it suffices to
prove that the graph $\whp$ does not contain a vertex which has at
most one edge other than a $B$-$B$ edge at time $m_1$, and at least
one $B$-$B$
edge at time $m_2$. 
Let $\mathcal{A}_v$ be the
event that $v$ is such vertex. Let $\mathcal{BS}$ be the event that
$|B|\le \frac{(\log \log n)^{12}}{\log ^2 n} n $ ($B$ is small),
and note that $\BFP(\mathcal{BS}) = 1 - o(1)$ by Claim \ref{clm_sizes_of_A_B}.
Then we have
\begin{align} \label{eq:eq1}
\BFP(\text{$G_{m_*}\setminus E_{BB}$ has minimum degree less than 2}) =
\BFP\left( \bigcup_{v\in V}{\mathcal{A}_v} \right) \le n \cdot
\BFP\left(\mathcal{A}_v \cap \mathcal{BS}\right) + o(1).
\end{align}
The event $\mathcal{A}_v$ is equivalent to the vertex $v$ receiving
$k$ $B$-$B$ edges, for some $k > 0$, and at most one edge other than
a $B$-$B$ edge at appropriate times. This event is contained in the
event $\mathcal{C}_v \cap \mathcal{D}_{v,k}$ where $\mathcal{C}_v$
is the event ``$v$ appears at most once in Step I", and
$\mathcal{D}_{v,k}$ is the event ``$\PHASETWODEG{v}{} \le 1$ by time
$m_1$ and $v$ receives $k$ $B$-$B$ edges by time $m_2$". Therefore
our next goal is to bound
\begin{align}\label{eqn:eq2}
\BFP({\mathcal{C}_v} \cap {\mathcal{D}_{v,k}} \cap \mathcal{BS}) =
\BFP({\mathcal{C}_v} \cap \mathcal{BS})\cdot
\BFP({\mathcal{D}_{v,k}}|{\mathcal{C}_v \cap \mathcal{BS}})
\le \BFP({\mathcal{C}_v})\cdot
\BFP({\mathcal{D}_{v,k}}|{\mathcal{C}_v \cap \mathcal{BS}}).
\end{align}
We can bound the probability of the event $\mathcal{C}_v$ by,
\begin{align}
\label{eqn:eq3}
\left(1 - \frac{2}{n}\right)^{2 n \log \log n} + {2 n \log \log n \choose 1} \left( \frac{2}{n} \right) \cdot \left(1 - \frac{2}{n}\right)^{2 n \log \log n - 1}
 = O\left(\frac{\log \log n}{\log ^4 n}\right).
\end{align}
To bound the event $\mathcal{D}_{v,k}$ which is ``$\PHASETWODEG{v}{}
\le 1$ at time $m_1$ and $v$ receives $k$ $B$-$B$ edges by time
$m_2$", note that $\mathcal{C}_v$ and $\mathcal{BS}$
are events which depend only on
the first $2\log \log n$ edges (Step I edges). Therefore
conditioning on this event does not affect the distribution of edges
in Step II (each edge is chosen uniformly at random among all
possible $n^2$ pairs). We only consider the case
$\PHASETWODEG{v}{}=1$ (the case $\PHASETWODEG{v}{}=0$ can be
handled similarly, and turns out to be dominated by the
case $\PHASETWODEG{v}{}=1$).
Thus to bound the probability, we choose
$k+1$ edges among the $m_2 - 2n\log \log n$ edges, let 1 of them to
be an $A$-$B$ edge, $k$ of them to be $B$-$B$ edges incident to $v$.
Moreover, since $d_{AB}(v) \le 1$ at time $m_1$, we know that at
least $m_1 - 2n\log \log n - k -1$ edges are not incident to $v$.
Thus,
\begin{align*}
\BFP &({\mathcal{D}_{v,k}} \,|\, {\mathcal{C}_v \cap \mathcal{BS}}) \\
& \le {m_2 - 2 n \log \log n \choose k+1} \left(\frac{2}{n}\right)^{k+1} {k+1 \choose 1} \frac{|A|}{n} \left(\frac{|B|}{n}\right)^k \left(1 - \frac{2}{n}\right)^{m_1 - 2n \log \log n - k - 1}.
\end{align*}
By using the inequalities $1 - x \le
e^{-x}$, $|A| \le n$, and ${m_2 - 2 n \log \log n \choose k+1} \le
m_2^{k+1}$, the probability above is bounded by
\begin{align}\label{eqn:eq4}
(k+1)m_2^{k+1} \left(\frac{2}{n}\right)^{k+1} \left(\frac{|B|}{n}\right)^k \exp \left( -\frac{2}{n} (m_1 - 2n \log \log n - k - 1)\right).
\end{align}
Therefore by (\ref{eqn:eq2}), (\ref{eqn:eq3}), and (\ref{eqn:eq4}),
\begin{align*}
\BFP &({\mathcal{C}_v} \cap \mathcal{D}_{v,k} \cap \mathcal{BS}) \leq \\
&O\left(\frac{\log\log n }{\log^4 n}\right) (k+1) m_2^{k+1} \left(\frac{2}{n}\right)^{k+1} \left(\frac{|B|}{n}\right)^k \exp \left( -\frac{2}{n} (m_1 - 2n \log \log n - k - 1)\right).
\end{align*}
Plugging the bound $|B| \le \frac{n \log \log ^{12}n}{\log^2 n}$ and
$m_2 \le n \log n$ in the latter, one obtains:
\[ O(k)\left(\frac{\log\log n }{\log^3 n}\right) \left(\frac{2(\log \log n)^{12}}{\log n}\right)^{k} \exp \left( -\frac{2}{n} (m_1 - 2 n \log \log n - k - 1) \right). \]
By the definition $m_1 = \frac{1}{2}n \log n + \frac{1}{2}n \log
\log n - n \log \log \log n$, this further simplifies to
\[ O(k)\left( \frac{(\log \log n)^3}{n} \right) \left(\frac{2 e^{2/n}(\log \log n)^{12}}{\log n}\right)^{k}. \]
Summing over all possible values of $k$,
\begin{align*}
\sum_{k=1}^{\infty} \BFP ({\mathcal{C}_v} \cap \mathcal{D}_{v,k} \cap \mathcal{BS})\le  \sum_{k=1}^{\infty} \frac{O(k) (\log \log n)^3}{n}\left( \frac{4(\log \log n)^{12}}{\log n}\right)^{k} = o(n^{-1}).
\end{align*}
Going back to $(\ref{eq:eq1})$, we get that
$$\BFP(\text{$G_{m_*}\setminus E_{BB}$ has minimum degree less than 2})=n\cdot o(n^{-1})+o(1)=o(1).$$
Note that as mentioned in the beginning of this section, we used $\frac{2}{n}$ to
estimate the probability of an edge being incident to a fixed vertex. This probability
is in fact $\frac{2}{n} - \frac{1}{n^2}$, but the term $\frac{1}{n^2}$ will only
affect the lower order estimates.
\end{proof}

\begin{CLAIM} \label{claim_exhastiveness}
Every restricted vertex is $\whp$ either partially-blossomed, or a bud.
\end{CLAIM}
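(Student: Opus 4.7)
The plan is to use Claim \ref{clm_bbedges} to reduce to a single bad case, and then bound its probability by a union bound argument in the spirit of the proof of Claim \ref{clm_bbedges}. Let $v$ be a restricted vertex that is not partially-blossomed, so $v \in B$ and $d_1(v) + d_2(v) + d_{AB}(v) \leq 1$. By Claim \ref{clm_bbedges}, whp every vertex $u$ satisfies $d_1(u) + d_2(u) + \mathrm{neg}(u) + d_{AB}(u) \geq 2$. Case-splitting on $d_{AB}(v) \in \{0,1\}$: if $d_{AB}(v) = 1$, the not-partially-blossomed hypothesis forces $d_1(v) + d_2(v) = 0$, and Claim \ref{clm_bbedges} then forces $\mathrm{neg}(v) \geq 1$, so $v$ is a bud. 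It therefore remains to rule out, whp, the event $\mathcal{A}_v := \{d_{AB}(v) = 0\} \cap \{d_1(v) + d_2(v) \leq 1\}$ simultaneously for all vertices $v$.

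For the probability bound on $\mathcal{A}_v$, I would exploit that $\{d_1+d_2 \leq 1\}$ depends only on Step I, while $\{d_{AB}(v) = 0\}$ depends only on the independent Step II edges given the Step I configuration. Conditioning on the high-probability events $\mathcal{BS} := \{|A|/n \geq 1 - (\log\log n)^{12}/\log^2 n\}$ (Claim \ref{clm_sizes_of_A_B}) and $\{m_* \geq m_1\}$ (Claim \ref{clm_stopping_time}), each Step II edge is an $A$-$B$ edge incident to $v$ with probability $2|A|/n^2$ and there are at least $m_1 - 2n\log\log n$ Step II edges, so a calculation identical to the one in Claim \ref{clm_bbedges} gives
\[
 \BFP\!\left[\,d_{AB}(v) = 0 \;\middle|\; \text{Step I},\, \mathcal{BS},\, m_* \geq m_1\,\right] \;\leq\; \left(1 - \tfrac{2|A|}{n^2}\right)^{m_1 - 2n\log\log n} \;=\; O\!\left(\tfrac{(\log n)^3 (\log\log n)^2}{n}\right).
\]

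The main technical obstacle is the Step I estimate $\BFP[d_1(v) + d_2(v) \leq 1] = O(\log\log n/\log^4 n)$; the naive bound $\BFP[d_1(v) \leq 1] = O(\log\log n/\log^2 n)$ is off by $\log^2 n$ and would leave the union bound at $\omega(1)$. To obtain the refined bound, I would decompose $\{d_1+d_2 \leq 1\} = \{d_1 = 0, d_2 \leq 1\} \sqcup \{d_1 = 1, d_2 = 0\}$ and condition on the first-vertex sequence of Step I, which simultaneously determines $d_1$ and the count $S$ of Step I edges whose first vertex is already saturated at the time of arrival. Given this sequence, $d_2$ is $\mathrm{Bi}(S, 1/n)$-distributed, and a Chernoff argument (combined with the identity $\int_0^M \BFP[\mathrm{Poi}(s) \geq 12]\,ds = M - 12 + o(1)$ for $M \gg 12$, which yields $\mathbb{E}[S] = (2-o(1))n\log\log n$) shows $S \geq (2-o(1))n\log\log n$ whp. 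Plugging in $\BFP[d_1 = 0] \sim 1/\log^2 n$, $\BFP[d_1 = 1] \sim 2\log\log n/\log^2 n$, and $\BFP[\mathrm{Bi}(S, 1/n) \leq 1] = O(\log\log n/\log^2 n)$, each term of the decomposition is $O(\log\log n/\log^4 n)$.

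Multiplying the Step~I and Step~II bounds yields $\BFP[\mathcal{A}_v] = O((\log\log n)^3/(n\log n))$, so a union bound over the $n$ vertices gives $\BFP\!\left[\bigcup_v \mathcal{A}_v\right] = O((\log\log n)^3/\log n) + o(1) = o(1)$, completing the proof.
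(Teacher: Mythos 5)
Your proposal is correct and follows essentially the same route as the paper: reduce via Claim \ref{clm_bbedges} to the event $\{d_1(v)+d_2(v)\le 1\}\cap\{\PHASETWODEG{v}{}=0\}$ at time $m_1$, bound the Step I part by $O(\mathrm{polyloglog}(n)/\log^4 n)$ by exposing first vertices and then the second vertices of saturated firsts, bound the Step II part by $O((\log n)^3(\log\log n)^2/n)$, and take a union bound. The only differences are cosmetic: the paper uses the containment $\{d_1+d_2\le 1\}\subset\{d_1\le 1\}\cap\{d_2\le 1\}$ instead of your exact decomposition, and your Chernoff/Poisson argument for $S$ is unnecessary since $S\ge 2n\log\log n-12n$ holds deterministically (each vertex occupies at most 12 first-vertex slots before saturating), which is what the paper uses.
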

\begin{proof}
Assume there exists a restricted vertex $v$ which is not
partially-blossomed or a bud. Then by definition, the sum
$\VTXONEDEG{v}{} + \VTXFREETWODEG{v}{} + \PHASETWODEG{v}{} \le 1$.
The possible values of the degrees
$(\VTXONEDEG{v}{},\VTXFREETWODEG{v}{},\PHASETWODEG{v}{})$ are
$(1,0,0), (0,1,0), (0,0,1)$, or $(0,0,0)$. Vertices which correspond
to $(0,0,1)$ will all be bud vertices $\whp$ by Claim \ref{clm_bbedges}. It
suffices to show then that $\whp$ there does not exist vertices
which correspond to $(1,0,0), (0,1,0)$, or $(0,0,0)$. Let $T$ be the
collection of vertices which have $\VTXONEDEG{v}{} +
\VTXFREETWODEG{v}{} \le 1$ and $\PHASETWODEG{v}{} = 0$ at time
$m_1$. By Claim \ref{clm_stopping_time} it suffices to prove that
$T$ is empty.
Let $\mathcal{BS}$ be the event $|B| \le
\frac{(\log\log n)^{12}}{\log^2n}n$, and note that by Claim
\ref{clm_sizes_of_A_B}, $\BFP(\mathcal{BS}) = 1 - o(1)$.
The event $\{ T \neq \emptyset \}$ is the same as $\cup_{v \in V} \{ v \in T \}$, and thus by the union bound,
\begin{align*}
\BFP(T \neq \emptyset) &\le o(1) + \sum_{v \in V} \BFP(\{v \in T\} \cap \mathcal{BS}) \\
&= o(1) + \sum_{v \in V}
\BFP\left(\{\VTXONEDEG{v}{}+\VTXFREETWODEG{v}{} \le 1\} \cap \{\PHASETWODEG{v}{} = 0\} \cap \mathcal{BS} \right). \end{align*}
By Bayes equation, the second term of right hand side splits into,
\begin{align}
&&\sum_{v \in V} \BFP\left(\{\VTXONEDEG{v}{}+\VTXFREETWODEG{v}{} \le 1\} \cap \mathcal{BS} \right) \cdot \BFP\left(\PHASETWODEG{v}{} =
0 \,|\, \{\VTXONEDEG{v}{}+\VTXFREETWODEG{v}{} \le 1\} \cap \mathcal{BS} \right) \nonumber \\
&&\le \sum_{v \in V} \BFP\left(\VTXONEDEG{v}{}+\VTXFREETWODEG{v}{} \le 1 \right) \cdot \BFP\left(\PHASETWODEG{v}{} =
0 \,|\, \{\VTXONEDEG{v}{}+\VTXFREETWODEG{v}{} \le 1\} \cap \mathcal{BS} \right). \label{eqn1}
\end{align}
The probability $\BFP(\VTXONEDEG{v}{}+\VTXFREETWODEG{v}{} \le 1)$ can be bounded by $\BFP(\{\VTXONEDEG{v}{} \le 1\}\cap\{\VTXFREETWODEG{v}{} \le 1\})$ which satisfies,
$$\BFP(\{\VTXONEDEG{v}{} \le 1\}\cap\{\VTXFREETWODEG{v}{} \le 1\})
= \BFP(\VTXONEDEG{v}{} \le 1) \cdot \BFP(\VTXFREETWODEG{v}{} \le 1 \, | \, \VTXONEDEG{v}{} \le 1). $$
The term $\BFP(\VTXONEDEG{v}{} \le 1)$ can be easily calculated as,
\begin{align*}
\left(1 - \frac{1}{n}\right)^{2 n \log \log n} + {2 n \log \log n \choose 1} \left( \frac{1}{n} \right) \cdot \left(1 - \frac{1}{n}\right)^{2 n \log \log n - 1}
 = O\left(\frac{\log \log n}{\log ^2 n}\right).
\end{align*}
To estimate $\BFP(\VTXFREETWODEG{v}{} \le 1 \, | \, \VTXONEDEG{v}{} \le 1)$, expose
the edges of Step I as follows: First expose all the first vertices.
Then expose the second vertices whose first vertex is saturated
($\VTXFREETWODEG{v}{}$ is now determined for every $v \in V$). The number of second-vertex-spots
that are considered is at least $2n \log \log n - 12 n$, and thus $\BFP(\VTXFREETWODEG{v}{} \le 1 | \VTXONEDEG{v}{} \le 1)$ is at most
\begin{align*}
\left(1 - \frac{1}{n}\right)^{2 n \log \log n - 12 n} + {2 n \log \log n \choose 1} \left( \frac{1}{n} \right) \cdot \left(1 - \frac{1}{n}\right)^{2 n \log \log n - 12 n - 1}
 = O\left(\frac{\log \log n}{\log ^2 n}\right).
\end{align*}
Thus as a crude bound, we have
\[ \BFP(d_1(v) + d_2(v) \le 1) \le \BFP(\VTXONEDEG{v}{} \le 1) \cdot \BFP(\VTXFREETWODEG{v}{} \le 1 \, | \, \VTXONEDEG{v}{} \le 1) =
O\left( \frac{(\log \log n)^2}{\log^4 n}\right). \]
Since $\VTXONEDEG{v}{}+ \VTXFREETWODEG{v}{} \le 1$ implies that $v
\in B$, and $\PHASETWODEG{v}{}$ depends only on the Step II edges
(which are independent from $\VTXONEDEG{v}{}$, $\VTXFREETWODEG{v}{}$, and $\mathcal{BS}$),
the second term of the right hand
side of equation (\ref{eqn1}), the probability
$\BFP\left(\PHASETWODEG{v}{} = 0 \,|\, \{\VTXONEDEG{v}{}+
\VTXFREETWODEG{v}{} \le 1\} \cap \mathcal{BS} \right)$ can be
bounded by
\begin{align*}
& \left( 1 - 2\frac{1}{n}\frac{|A|}{n} \right)^{m_1 - 2 n \log \log n} \le \exp\left(- 2(m_1 - 2 n \log \log n)|A|/n^2\right) \\
\le& \exp \left(- (\log n - 3 \log \log n - 2 \log \log \log n)\left(1 - \frac{(\log \log n)^{12}}{\log^2 n} \right)\right) \\
\le& \exp \left(- \log n + 3 \log \log n + 2 \log \log \log n + o(1) \right) = O\left(\frac{(\log n)^3 (\log \log n)^2}{n}\right).
\end{align*}
Therefore in (\ref{eqn1}),
\begin{align*}
\BFP(T \neq \emptyset)
&\le o(1) + \sum_{v \in V}  O\left(  \frac{(\log \log
n)^{2}}{\log ^4 n} \right) O\left(\frac{(\log n)^3 (\log \log n)^2}{n} \right) \\
&= o(1) + O\left( \frac{(\log \log n)^{4}}{\log n} \right) = o(1).
\end{align*}
\end{proof}

\begin{CLAIM}\label{clm_properties_of_buds}
The following properties hold $\whp$ for \emph{restricted} vertices:
\begin{enumerate}[(i)]
  \setlength{\itemsep}{1pt} \setlength{\parskip}{0pt}
  \setlength{\parsep}{0pt}
  \item There are at most $\log^{13}n$ such vertices,
  \item every such two vertices are at distance at least 3 in $G_{m_*}$ from each other.
\end{enumerate}
\end{CLAIM}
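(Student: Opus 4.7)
The plan is to establish both parts by the first-moment method. For (i) I will bound the expected number of restricted vertices by $O(\log^{8+o(1)} n) \ll \log^{13} n$ and apply Markov's inequality. For (ii) I will bound the expected number of pairs of restricted vertices at distance $1$ or $2$ in $G_{m_*}$ by $o(1)$, from which whp no such pair exists.

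For part (i), fix a vertex $v$. The event ``$v$ is restricted'' requires both $v \in B$ (determined by Step~I) and $\PHASETWODEG{v}{} \le 11$ (determined by Step~II), and the two are independent because Step~I and Step~II use disjoint sets of random edges. Since $\PHASETWODEG{v}{}$ is monotone in $t$ and $m_* \ge m_1$ whp by Claim~\ref{clm_stopping_time}, it suffices to bound the second event at time $m_1$. The computation in Claim~\ref{clm_sizes_of_A_B} gives $\BFP(v \in B) = O\!\left((\log\log n)^{11}/\log^2 n\right)$. Conditioning on the whp event $\mathcal{BS} = \{|B| \le (\log\log n)^{12}n/\log^2 n\}$, each Step~II edge is an $A$-$B$ edge incident to $v$ with probability $2|A|/n^2 = (2+o(1))/n$, so
\[
\BFP(\PHASETWODEG{v}{}(m_1) \le 11 \mid v \in B, \mathcal{BS}) \le \sum_{k=0}^{11}\binom{m_1}{k}\left(\tfrac{2|A|}{n^2}\right)^{k} \exp\!\left(-\tfrac{2|A|(m_1 - 2n\log\log n)}{n^2}\right) = O\!\left(\tfrac{\log^{10} n}{n}\right),
\]
where the exponential factor contributes $(1+o(1))/(n\log n)$ and the sum over $k$ contributes $O(\log^{11} n)$. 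Multiplying the two factors yields $\BFP(v \text{ restricted}) = O(\log^{8+o(1)} n / n)$, so the expected number of restricted vertices is $O(\log^{8+o(1)} n)$. Markov's inequality then gives the bound $\log^{13} n$ with large room to spare.

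For part (ii), I handle the two bad configurations separately. A distance-$1$ bad pair is $(u,v)$ with $u,v$ restricted and $uv \in E$; since $\BFP(uv\in E)\le 2m_*/n^2 = O(\log n / n)$ and conditioning on this single edge changes the marginal probability that each of $u,v$ is restricted by only a constant factor, the single-vertex bound from (i) applies and
\[
\BBE[\#\text{distance-}1\text{ bad pairs}] \le n^2 \cdot O\!\left(\tfrac{\log n}{n}\right) \cdot O\!\left(\tfrac{\log^{8+o(1)} n}{n}\right)^{2} = O\!\left(\tfrac{\log^{17+o(1)} n}{n}\right) = o(1).
\]
A distance-$2$ bad pair consists of restricted $u, v$ together with a common neighbour $w$ (so $uw,vw \in E$); an analogous computation gives
\[
\BBE[\#\text{distance-}2\text{ bad pairs}] \le n^3 \cdot O\!\left(\tfrac{\log n}{n}\right)^{2} \cdot O\!\left(\tfrac{\log^{8+o(1)} n}{n}\right)^{2} = O\!\left(\tfrac{\log^{18+o(1)} n}{n}\right) = o(1).
\]
Markov's inequality then rules out both cases whp, establishing (ii).

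The main technical obstacle is to justify the approximate independence used in both parts: the events ``$u$ restricted'' and ``$v$ restricted'' depend on overlapping sets of random edges, and conditioning on one or two specified edges being present slightly alters the distribution of the remaining edges and of the sets $A,B$. I would deal with this in the standard manner by first exposing the positions at which the conditioned edges occur and then observing that on the remaining $m_* - O(1)$ slots each edge is still uniformly drawn from the $n^2$ ordered pairs, so the single-vertex estimate from part (i) carries over with only a bounded multiplicative loss. The polylogarithmic slack in the final bounds comfortably absorbs this loss, so no refined coupling argument is required.
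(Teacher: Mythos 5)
Your proposal follows essentially the same route as the paper: a first‑moment bound plus Markov for (i), conditioning on the Step I outcome and the event $\mathcal{BS}$, and a union bound over bad pairs/triples (with the time slots of the conditioned edges exposed first) for (ii). One arithmetic slip: the exponential factor $\exp\bigl(-\tfrac{2|A|}{n^2}(m_1-2n\log\log n)\bigr)$ equals $\exp\bigl(-(1-o(1))(\log n-3\log\log n-2\log\log\log n)\bigr)=\Theta\bigl(\log^3 n\,(\log\log n)^2/n\bigr)$, not $(1+o(1))/(n\log n)$, so your conditional probability is $O(\log^{14+o(1)}n/n)$ and the expected number of restricted vertices is $O(\log^{12+o(1)}n)$ (matching the paper's $O((\log\log n)^{14}\log^{12}n)$), not $O(\log^{8+o(1)}n)$. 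This costs you a factor of about $\log^4 n$ throughout, but all the final bounds survive: $\log^{12+o(1)}n\ll\log^{13}n$ for Markov in (i), and the pair counts in (ii) become $O(\log^{25+o(1)}n/n)$ and $O(\log^{26+o(1)}n/n)$, still $o(1)$. The "approximate independence" you flag is handled in the paper by bounding the joint event directly via combined counts (at most $22$ first-vertex appearances / $A$-$B$ edges for the two vertices together, excluding the slots $t_1,t_2$), which is the clean way to implement the deferred-exposure argument you sketch.
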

\begin{proof}
Since being a restricted vertex is a monotone decreasing property,
by Claim \ref{clm_stopping_time} it suffices to prove $(i)$ at time
$m_1$. Recall that $B_2$ is the collection of restricted vertices (a
vertex is restricted if it is not saturated or blossomed).

First, condition on the whole outcome of Step I edges (first
$2n\log \log n$ edges) and the event that $|B| \le \frac{(\log \log n)^{12} }{\log^2
n} n$. Then the set $B$ is determined, and
for a vertex $v \in B$, we can bound the probability of the event
$v \in B_2$ as following
\begin{align}
\BFP\left( v \in B_2 \right) \le \sum_{\ell=0}^{11} {m_2 \choose \ell}\left(\frac{2}{n}\right)^\ell
\left( 1 - \frac{2|A|}{n^2} \right)^{m_1 - 2 n \log \log n - \ell} \label{eq:eq5}.
\end{align}
Use the inequalities $m_1 = \frac{1}{2} n \log n + \frac{1}{2} n
\log \log n - \log \log \log n \le n \log n$, $m_2 \le n \log n$, $1
- x \le e^{-x}$, and $|A| = n - |B| \ge n \left(1 - \frac{(\log \log n)^{12}}{\log^2
n}\right)$ to bound the above by
\[
\sum_{l=0}^{11} (2\log n)^\ell
\exp\left(- (\log n - 3\log \log n - 2 \log \log \log n - \ell)\left(1 - \frac{(\log \log n)^{12}}{\log^2 n} \right) \right).
\]
The sum is dominated by $\ell=11$, and this gives
\[
O\left( \log^{11} n \right)
  \exp\left(- \log n + 3 \log \log n + 2 \log \log \log n + o(1) \right) \le O\left( \frac{(\log \log n)^{2}\log^{14} n}{n} \right).
\]
Thus the expected size of $B_2$ given the Step I edges is
\[ \BBE[|B_2| \,|\, \textrm{Step I edges}] \leq |B| \cdot O\left( \frac{(\log \log n)^{2}\log^{14} n}{n} \right) \le O((\log \log n)^{14} \log^{12}n). \]
Since the assumptions on $A$ and $B$ holds $\whp$ by Claim \ref{clm_sizes_of_A_B},
we can use Markov inequality to conclude that $\whp$ there
are at most $\log^{13}n$ vertices in $B_2$. Let us now prove $(ii)$.

For three distinct vertices $v_1, v_2$ and $w$ in $V$, let
$\mathcal{A}(v_1, v_2, w)$ be the event that $w$ is a common
neighbor of $v_1$ and $v_2$. The probability of there being edges
$(v_1,w)$ (or $(w,v_1)$) and $(v_2, w)$ (or $(w,v_2)$) and $v_1, v_2
\in B_2$ can be bounded by first choosing two
time slots where $(v_1,w)$ (or $(w,v_1)$) and $(v_2, w)$ (or $(w,v_2)$)
will be placed, and then filling in the remaining edges so that
$v_1, v_2 \in B_2$. We will only bound the event of there being edges
$(v_1, w)$ and $(w, v_2)$ in the edge process (other cases can
be handled in a similar manner). The probability we would like to bound is
\[ \BFP(\exists v_1, v_2, w, \exists 1 \le t_1, t_2 \le m_2,  e_{t_1} = (v_1, w), e_{t_2} = (w, v_2), v_1, v_2 \in B_2). \]
By the union bound this probability is at most
\begin{align}
&\sum_{v_1, v_2,w \in V} \sum_{t_1, t_2=1}^{m_2} \BFP(e_{t_1} = (v_1, w), e_{t_2} = (w, v_2), v_1, v_2 \in B_2) \\
= & \sum_{v_1, v_2,w \in V} \sum_{t_1, t_2=1}^{m_2} \BFP(e_{t_1} = (v_1, w), e_{t_2} = (w, v_2)) \BFP(v_1, v_2 \in B_2 | e_{t_1}=(v_1,w), e_{t_2} = (w,v_2)) \nonumber \\
\le & \frac{1}{n} \sum_{t_1, t_2=1}^{m_2} \BFP(v_1, v_2 \in B_2 | e_{t_1}=(v_1,w), e_{t_2} = (w,v_2)) \label{eq:eq6}.
\end{align}
To simplify the notation we abbreviate $\BFP(v_1, v_2 \in B_2 | e_{t_1}=(v_1,w), e_{t_2} = (w,v_2))$ by $\BFP(v_1, v_2 \in B_2 |e_{t_1},e_{t_2})$. By using the independence of Step I and Step II edges we have,
\[ \BFP(v_1, v_2 \in B_2 |e_{t_1},e_{t_2}) = \BFP(v_1, v_2 \in B |e_{t_1},e_{t_2}) \BFP(v_1, v_2 \notin B_1 |v_1,v_2 \in B, e_{t_1}, e_{t_2}). \]
For fixed $t_1$ and $t_2$, we can bound $\BFP(v_1, v_2 \in B |e_{t_1},e_{t_2})$ by the probability
of ``$v_1$ and $v_2$ appear at most 22 times combined in Step I as a first vertex other than at time $t_1$ and $t_2$'',
whose probability can be bounded as follows regardless of the value of $t_1$ and $t_2$,
\begin{align*}
\sum_{k=0}^{22} {2 n \log \log n \choose k} \left( \frac{2}{n} \right)^k \cdot \left(1 - \frac{2}{n}\right)^{2 n \log \log n - 2 - k}
 \le \sum_{k=0}^{22} (4 \log \log n)^k \frac{O(1)}{\log ^4 n} = O\left(\frac{(\log \log n)^{22}}{\log ^4 n}\right).
\end{align*}
To bound $\BFP(v_1, v_2 \notin B_1 |v_1,v_2 \in B, e_{t_1}, e_{t_2})$, it suffices to bound $\BFP(v_1, v_2 \notin B_1 |v_1,v_2 \in B, e_{t_1}, e_{t_2}, \mathcal{BS})$,
which can be bounded
by the probability of ``$v_1$ and $v_2$ receives at most 22 $A$-$B$ edges combined in Step II
other than at time $t_1$ and $t_2$''.
Regardless of the value of $t_1$ and $t_2$, this satisfies the bound,
\[
\sum_{\ell=0}^{22} {m_2 \choose \ell}\left(\frac{4}{n}\right)^{\ell}
\left( 1 - \frac{4}{n}\frac{|A|}{n} \right)^{m_1 - 2 - 2 n \log \log n - l}.
\]
Note that $\frac{4}{n}$ and $\frac{2}{n}$ in this equation should in
fact involve some terms of order $\frac{1}{n^2}$, but we omitted it
for simplicity since it does not affect the asymptotic final
outcome. By a similar calculation to (\ref{eq:eq5}), this eventually
can be bounded by $O(\frac{\log^{29}n}{n^2})$. Thus we have
\[ \BFP(v_1, v_2 \in B_2 |e_{t_1},e_{t_2}) = O\left(\frac{\log^{26}n}{n^2}\right), \]
which by (\ref{eq:eq6}) and $m_2 \le n \log n$ gives,
\[ \BFP(\exists v_1, v_2, w, \exists 1 \le t_1, t_2 \le m_2,  e_{t_1} = (v_1, w), e_{t_2} = (w, v_2), v_1, v_2 \in B_2) \le O\left(\frac{\log^{28}n}{n}\right). \]
Therefore by Markov's inequality, $\whp$ no such three vertices exist,
which implies that two vertices $v_1, v_2 \in B_2$ cannot be at
distance two from each other in $G_{m_*}$.
Similarly, we can prove that $\whp$ every two vertices $v_1, v_2 \in B_2$
are not adjacent to each other, and hence
$\whp$ every $v_1, v_2 \in B_2$ are at distance
at least two away from each other.
\end{proof}

\subsection{Configuration of the edge process}

To prove that our algorithm succeeds $\whp$, we first reveal some
pieces of information of the edge process, which we call the
``configuration'' of the process. These information will allow
us to determine whether the underlying edge process is typical or not.
Then in the next section, using
the remaining randomness,
we will construct a Hamilton cycle.

In the beginning, rather than thinking of edges coming one by one,
we regard our edge process
${\bf e} = (e_1, e_2, \cdots, e_{m_*})$ as a collection of edges $e_i$
for $i=1, \cdots, m_*$
whose both endpoints are not known. We can decide to
reveal certain information as necessary.
Let us first reveal the following.

\begin{enumerate}
\item[1.] For $t \leq 2 n \log \log n$, reveal the first vertex of
the $t$-th edge $e_t$. If this vertex already appeared as the first vertex
at least 12 times among the edges $e_1, \cdots, e_{t-1}$, then
also reveal the second vertex.
\end{enumerate}

Given this information, we can determine the saturated vertices, and
hence we know the sets $A$ and $B$. Therefore, it is possible to
reveal the following information.

\begin{enumerate}
\item[2.] For $t > 2 n \log \log n$, reveal all the vertices
that belong to $B$.
\end{enumerate}

The information we revealed
determines the blossomed ($B_1$), and restricted ($B_2$) vertices.
Thus we can further reveal the following information.

\begin{enumerate}
\item[3.] For $t \leq 2 n \log \log n$, further reveal all the non-revealed
vertices that belong to $B_2$.
\item[4.] For every edge $e_t=(v_t, w_t)$ in which we already know
that either $v_t \in B_2$ or $w_t \in B_2$, also reveal the
other vertex.
\end{enumerate}
We define the \emph{configuration} of an edge process as the above
four pieces of information.

We want to say that all the non-revealed vertices are uniformly
distributed over certain sets. But in order for this to be true, we
must make sure that the distribution of the non-revealed vertices
is not affected by the fact that we know the value of $m_*$ (some
vertex has degree exactly 2 at time $m_*$, and maybe a non-revealed
vertex will make this vertex to have degree 2 earlier than $m_*$).
This is indeed the case, since the last vertex to have degree 2 is
necessarily a restricted vertex, and all the locations of the
restricted vertices are revealed. Thus the non-revealed vertices
cannot change the value of $m_*$. Therefore, once we condition on
the configuration of an edge process, the remaining vertices are
distributed in the following way:

\begin{enumerate}
  \setlength{\itemsep}{1pt}
  \setlength{\parskip}{0pt}
  \setlength{\parsep}{0pt}
\item[(i)] For $t \leq 2 n \log \log n$, if the first vertex of the edge $e_t$
appeared at most 12 times among $e_1, \cdots, e_{t-1}$, then its second
vertex is either a known vertex in $B_2$ or is a random vertex in $V \setminus B_2$.
\item[(ii)] For $t > 2 n \log \log n$, if both vertices of $e_t$ are not revealed,
then $e_t$ consists of two random vertices of $A$. If only one of the
vertices of $e_t$ is not revealed, then the revealed vertex is in $B$,
and the non-revealed vertex is a random vertex of $A$.
\end{enumerate}

\begin{DEF} \label{def_typicalconfiguration}
A configuration of an edge process is \emph{typical} if it
satisfies the following.
\begin{enumerate}[(i)]
  \setlength{\itemsep}{1pt}
  \setlength{\parskip}{0pt}
  \setlength{\parsep}{0pt}
\item The number of saturated and blossomed vertices satisfy $|A| \ge n - \frac{(\log\log n)^{12}}{\log^2 n} n$, and $|B_1| \le \frac{(\log \log n)^{12}}{\log^2 n} n$ respectively.
\item The number of restricted vertices satisfies $|B_2| \le \log^{13} n$.
\item Every vertex appears at least twice in the configuration even without considering
the $B$-$B$ edges.
\item All the restricted vertices are either partially-blossomed or buds.
\item In the non-directed graph induced by the edges whose both endpoints are revealed,
every two restricted vertices $v_1, v_2$ are at distance
at least 3 away from each other.
\item There are at least $\frac{1}{3} n \log n$ edges $e_t$ for $t > 2n\log \log n$
whose both endpoints are not yet revealed.
\end{enumerate}
\end{DEF}

\begin{LEMMA} \label{lemma_typicalconfiguration}
The random edge process has a typical configuration $\whp$.
\end{LEMMA}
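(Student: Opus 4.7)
The plan is to verify the six conditions of Definition~\ref{def_typicalconfiguration} one by one, each holding with probability $1-o(1)$, and then take a union bound. Five of the six conditions are essentially immediate consequences of the claims already proved in this section; only condition (vi) requires a new (but routine) concentration argument.

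Conditions (i), (ii), (iv), and (v) follow almost directly. Claim~\ref{clm_sizes_of_A_B} yields the lower bound on $|A|$, and since $B_1\subseteq B=V\setminus A$ the matching upper bound $|B_1|\le \tfrac{(\log\log n)^{12}}{\log^2 n}\, n$ is immediate. Condition (ii) is the first conclusion of Claim~\ref{clm_properties_of_buds}, condition (iv) is Claim~\ref{claim_exhastiveness}, and condition (v) follows because the graph induced by those configuration edges whose both endpoints are revealed is a subgraph of $G_{m_*}$, where Claim~\ref{clm_properties_of_buds}(ii) already gives distance at least $3$ between any two restricted vertices.

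The main obstacle is (iii), which requires a careful accounting of the revealing rules. The idea is to combine Claim~\ref{clm_bbedges}, which whp guarantees that $G_{m_*}\setminus E_{BB}$ has minimum degree at least $2$, with the observation that every non-$B$-$B$ edge incident to any vertex $v$ actually causes $v$ to be revealed in the configuration. If $v\in A$, then by definition of saturation $v$ appears at least $12$ times as a first vertex in Step~I; each such appearance is revealed by rule~1, and none of these edges is a $B$-$B$ edge since $v\in A$. If $v\in B_1$, then by definition of blossoming there are at least $12$ edges of the form $\{v,A\}$ in Step~II, each revealed via rule~2 and each non-$B$-$B$. Finally, if $v\in B_2$, then every incident edge is fully revealed (by rule~2 in Step~II and by rules~1 and~3 in Step~I, with rule~4 handling the second endpoints), so the minimum degree conclusion of Claim~\ref{clm_bbedges} already supplies the required two non-$B$-$B$ incidences.

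For condition (vi) the key observation is that a Step~II edge has both endpoints non-revealed exactly when both endpoints lie in $A$: any endpoint in $B$ is exposed by rule~2, and any edge incident to $B_2$ is fully exposed by rule~4. Conditional on Step~I (which determines $A$ and $B$), each Step~II edge is an independent uniformly random ordered pair of vertices, so it is an $A$-$A$ edge with probability $(|A|/n)^2$. On the whp events $|A|\ge n\bigl(1-(\log\log n)^{12}/\log^2 n\bigr)$ (Claim~\ref{clm_sizes_of_A_B}) and $m_*\ge m_1$ (Claim~\ref{clm_stopping_time}), the expected number of $A$-$A$ edges among the Step~II edges up to time $m_1$ is $(1/2+o(1))n\log n$, and Chernoff's inequality pushes the actual count above $\tfrac{1}{3}n\log n$ whp. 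A union bound over the six conditions then finishes the proof.
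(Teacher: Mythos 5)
Your proposal is correct and follows essentially the same route as the paper: each condition of Definition~\ref{def_typicalconfiguration} is checked against Claims~\ref{clm_stopping_time}--\ref{clm_properties_of_buds}, with condition (vi) handled by noting that the non-revealed Step~II edges are exactly the $A$-$A$ edges and applying Chernoff's inequality. Your extra bookkeeping for condition (iii) --- tracing which revealing rules expose the non-$B$-$B$ incidences for vertices in $A$, $B_1$, and $B_2$ --- is a correct elaboration of the step the paper attributes directly to Claim~\ref{clm_bbedges}.
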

\begin{proof}
The fact that the random edge process has $\whp$ a configuration
satisfying $(i), (iii)$, and $(iv)$ follows from Claims
\ref{clm_sizes_of_A_B}, \ref{clm_bbedges}, and \ref{claim_exhastiveness} respectively.
$(ii)$ and $(v)$ follow from
Claim \ref{clm_properties_of_buds}. To verify $(vi)$, note that by Claim \ref{clm_stopping_time} and \ref{clm_sizes_of_A_B}, $\whp$~ there are at least
$\frac{1}{2} n \log n - 2 n \log \log n$ edges of Step II, and $|A| = (1
- o(1))n$. Therefore the probability of a Step II edge being an $A$-$A$
edge is $1 - o(1)$, and the expected number of $A$-$A$ edges is
$(1/2-o(1))n \log n$. Then by Chernoff's inequality, $\whp$ there are at
least $\frac{1}{3} n \log n$ $A$-$A$ edges. These edges are
the edges we are looking for in $(vi)$.
\end{proof}

\section{Finding a Hamilton Cycle}
\label{section_findingham}

In the previous section, we established several useful properties
of the underlying graph $G_{m_*}$.
In this section, we study the algorithm $\ALG$ using these properties,
and prove that conditioned on the edge process having a typical
configuration, the graph $D_{m_*}$ $\whp$ contains a Hamilton cycle (recall
that the graph  $D_{m_*}$  is the set of random edges of the edge
process, oriented according to \ALG). As described in Section \ref{section_outline}, the proof is a
constructive proof, in the sense that we describe how to find such a
cycle. The algorithm is similar to that used in \cite{Frieze} which we described in some details
in Section \ref{section_outline}. Let us briefly recall that it proceeds in two stages:
\begin{enumerate}
  \setlength{\itemsep}{1pt} \setlength{\parskip}{0pt}
  \setlength{\parsep}{0pt}
  \item Find a 1-factor of $G$. If it contains more than $O(\log n)$ cycles, fail.
  \item Join the cycles into a Hamilton cycle.
 \end{enumerate}
The main challenge in our case is to prove that the first step of
the algorithm does not fail. Afterwards, we argue why we can apply
Frieze's results for the remaining step.

\subsection{Almost 5-in 5-out subgraph}

Let $\FIVEINOUT$ be the following subgraph of $D_{m_*}$. For each
vertex $v$, assign a set of neighbors $\OUT(v)$ and $\IN(v)$,
where $\OUT(v)$ are out-neighbors of $v$ and $\IN(v)$ are
in-neighbors of $v$. For saturated and blossomed vertices, $\OUT(v)$
and $\IN(v)$ will be of size 5, and for restricted vertices, they
will be of size 1 (thus $\FIVEINOUT$ is not a 5-in 5-out
directed graph under the strict definition).

Let $E_1$ be the edges of Step I (first $2 n \log \log n$ edges),
and $E_2$ be the edges of Step II (remaining edges).

\begin{itemize}
  \setlength{\itemsep}{1pt} \setlength{\parskip}{0pt}
  \setlength{\parsep}{0pt}
\item If $v$ is saturated, then consider
the first 12 appearances in $E_1$ of $v$ as a first vertex. Some
of these edges might later be used as $\OUT$ or $\IN$ for other
vertices. Hence among these 12 appearances, consider only those
whose second vertex is not in $B_2$. By property (v) of
Definition \ref{def_typicalconfiguration}, there will be at least 11 such
second vertices for a typical configuration. Define $\OUT(v)$ as the first 5 vertices
among them which were directed out from $v$, and $\IN(v)$ as the
first 5 vertices among them which were directed in to $v$ in
$\ALG$.

\item If $v$ blossoms, then consider the first 10 $A$-$B$ edges in $E_2$
connected to $v$, and look at the other end points. Let
$\OUT(v)$ be the first 5 vertices which are an out-neighbor of
$v$ and $\IN(v)$ be the first 5 vertices which are an
in-neighbor of $v$. \end{itemize}

A partially-blossomed vertex, by definition, has $\VTXONEDEG{v}{} +
\VTXFREETWODEG{v}{} + \PHASETWODEG{v}{} \ge 2$, and must fall into
one of the following categories. $(i)$ $\VTXONEDEG{v}{} \ge 2$,
$(ii)$ $\VTXFREETWODEG{v}{} \ge 2$, $(iii)$ $\PHASETWODEG{v}{} \ge
2$, $(iv)$ $\VTXONEDEG{v}{} = 1$, $\VTXFREETWODEG{v}{} = 1$, $(v)$
$\VTXONEDEG{v}{} = 1$, $\PHASETWODEG{v}{} = 1$, and $(vi)$
$\VTXONEDEG{v}{} = 0, \VTXFREETWODEG{v}{} = 1$, $\PHASETWODEG{v}{} =
1$. If it falls into several categories, then pick the first one
among them.

\begin{itemize}
  \setlength{\itemsep}{1pt} \setlength{\parskip}{0pt}
  \setlength{\parsep}{0pt}
\item If $v$ partially-blossoms and $\VTXONEDEG{v}{} \ge 2$,
consider the first two appearances of $v$ in $E_1$ as a first
vertex. The first is an out-edge and the second is an in-edge
(see Section \ref{subsection_orientationrule1}).

\item If $v$ partially-blossoms and $\VTXFREETWODEG{v}{} \ge 2$,
consider the first two appearances of $v$ in $E_1$ as a second
vertex whose first vertex is saturated. The first is an in-edge
and the second is an out-edge (see Section
\ref{subsection_orientationrule1}).

\item If $v$ partially-blossoms and $\PHASETWODEG{v}{} \ge 2$,
consider the first two $A$-$B$ edges in $E_2$ incident to $v$.
One of it is an out-edge and the other is an in-edge. Note that
unlike other cases, the actual order of in-edge and out-edge
will depend on the configuration. But since the configuration
contains all the positions at which $v$ appeared in the process,
the choice of in-edge or out-edge only depends on the
configuration and not on the non-revealed vertices (note that this
is slightly different from the blossomed vertices).

\item If $v$ partially-blossoms and $\VTXONEDEG{v}{} = 1$, $\VTXFREETWODEG{v}{} = 1$,
consider the first appearance of $v$ in $E_1$ as a first vertex,
and the first appearance of $v$ in $E_1$ as a second vertex
whose first vertex is saturated. The former is an out-edge and
the latter is an in-edge.

\item If $v$ partially-blossoms and $\VTXONEDEG{v}{} = 1$, $\PHASETWODEG{v}{} = 1$,
consider the first appearance of $v$ in $E_1$ as a first vertex,
and the first $A$-$B$ edge connected to $v$ in $E_2$. The former
is an out-edge and the latter is an in-edge (see rule 1 in
Section \ref{subsection_orientationrule2}).

\item If $v$ partially-blossoms and $\VTXONEDEG{v}{} = 0, \VTXFREETWODEG{v}{} = 1$, $\PHASETWODEG{v}{} = 1$,
consider the first appearance of $v$ in $E_1$ as a second vertex
whose first vertex is saturated, and the first $A$-$B$ edge
connected to $v$ in $E_2$. The former is an in-edge and the
latter is an out-edge (see rule 2 in Section
\ref{subsection_orientationrule2}). Thus we can construct
$\OUT(v)$ and $\IN(v)$ of size 1 each, for all
partially-blossomed vertices.

\item If $v$ is a bud, then consider the first (and only) $A$-$B$ edge
connected to $v$. Let this edge be $e_s$. For a typical
configuration, by property (iii) of Definition
\ref{def_typicalconfiguration}, we know that $v$ has a neglected
edge connected to it. Let $e_t$ be the first neglected edge of
$v$. By property (v) of Definition
\ref{def_typicalconfiguration}, we know that the first vertex of
the neglected edge is either in $A$ or $B_1$. According to the
direction of this edge, the direction of $e_s$ will be chosen as
the opposite direction (see rule 3 in Section
\ref{subsection_orientationrule2}). As in the
partially-blossomed case with $\PHASETWODEG{v}{} \ge 2$, the
direction is solely determined by the configuration. Thus we can
construct $\OUT(v)$ and $\IN(v)$ of size 1 each (which is
already fixed once we fix the configuration).
\end{itemize}

This in particular shows that $D_{m_*}$ has minimum in-degree and
out-degree at least 1, which is clearly a necessary condition for
the graph to be Hamiltonian. A crucial observation is that, once we
condition on the random edge process having a fixed typical configuration,
we can determine exactly which edges are going to be used
to construct the graph $\FIVEINOUT$ just by looking at the
configuration.

For a set $X$, let $\RV(X)$ be an element chosen independently and
uniformly at random in the set (consider each appearance of $\RV
(X)$ as a new independent copy).
\begin{PROP} \label{prop_bipdistribution}
Let $V' = V \setminus B_2$. Conditioned on the edge process having a
typical configuration, $\FIVEINOUT$ has the following distribution.
\begin{enumerate}[(i)]
  \setlength{\itemsep}{1pt} \setlength{\parskip}{0pt}
  \setlength{\parsep}{0pt}
\item If $v$ is saturated, then $\OUT(v)$
and $\IN(v)$ are a union of 5 copies of $\RV (V')$.
\item If $v$ blossoms,
then $\OUT(v)$ and $\IN(v)$ are a union of 5 copies of $\RV
(A)$.
\end{enumerate}
\end{PROP}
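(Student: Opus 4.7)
The plan is to show, for each fixed typical configuration $C$, that the conditional distribution of $\FIVEINOUT$ given $C$ is exactly the product of independent uniform samples described in the statement. The engine is the distributional description of the non-revealed vertices recorded immediately after Definition \ref{def_typicalconfiguration}: conditionally on $C$, the second vertex of each unrevealed Step I slot is an independent uniform element of $V\setminus B_2=V'$, and the unrevealed endpoint of each Step II slot with a revealed $B$-endpoint is an independent uniform element of $A$. For each vertex type in the statement I would identify from $C$ alone the exact slots of the process that the algorithm uses to populate $\OUT(v)$ and $\IN(v)$, and then read off the joint distribution from the above description.

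For part (i), let $v$ be saturated. The configuration reveals the first vertex of every edge in $E_1$, so $C$ determines the positions $t_1<t_2<\dots<t_{12}$ of the first twelve appearances of $v$ as a first vertex. The Step I orientation rule makes $e_{t_k}$ an out-edge of $v$ for odd $k$ and an in-edge for even $k$, producing six out-candidates and six in-candidates. The second vertex $w_{t_k}$ is revealed in $C$ precisely when $w_{t_k}\in B_2$ (via rule 3 of the configuration), so among the twelve slots exactly the $B_2$-second-vertex slots are discarded by the algorithm; as observed right after the definition of $\FIVEINOUT$, property (v) of Definition \ref{def_typicalconfiguration} leaves at least eleven surviving candidates, hence at least five of each parity. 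The first five out and first five in among the survivors are fixed by $C$, and at each chosen slot the (unrevealed) second vertex is, conditionally on $C$, an independent uniform element of $V'$. This is exactly the claim.

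For part (ii), let $v$ be blossomed. Since $v\in B$, rule 2 of the configuration reveals every appearance of $v$ in $E_2$. At each such slot the other endpoint is revealed iff it lies in $B$, so the $A$-$B$ edges incident to $v$ are precisely the slots whose other endpoint is unrevealed, and their positions $s_1<s_2<\dots$ are read off from $C$. The starting direction of $v$'s alternation in Step II is fixed by one of the four rules of Section \ref{subsection_orientationrule2}, each of which depends only on $v$'s behaviour in Step I, and this behaviour is fully contained in $C$. Consequently the first ten $A$-$B$ slots split deterministically given $C$ into five out-edges and five in-edges, and these ten other-endpoints are what the algorithm uses to populate $\OUT(v)$ and $\IN(v)$. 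By the characterisation of unrevealed Step II endpoints recalled above, each of the ten endpoints is an independent uniform element of $A$, as claimed.

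The main obstacle, as I see it, is pure bookkeeping rather than probability: one has to verify, case by case, that in each of the algorithm's subcases (and especially in the four starting-direction cases of the Step II alternation) all of the data needed to pin down which slots populate $\OUT(v)$ and $\IN(v)$ really is contained in $C$, and not in a slot that remains random after conditioning. Independence across different slots, whether for a single vertex or across vertices, requires no separate probabilistic argument, since in the random edge process each ordered pair is drawn independently of every other time, and conditioning on $C$ merely fixes some of these pairs while leaving the remaining ones jointly independent and uniform on the stated sets.
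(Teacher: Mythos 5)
Part (i) of your argument is correct and is essentially the paper's proof: the configuration pins down the twelve first-vertex slots of a saturated $v$, the alternation fixes their parities, the $B_2$-second-vertex slots are exactly the revealed (and discarded) ones, and the surviving second vertices are i.i.d.\ uniform on $V'$.

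The gap is in part (ii), in your claim that the starting direction of the Step II alternation of a blossomed vertex ``depends only on $v$'s behaviour in Step I, and this behaviour is fully contained in $C$,'' so that the first ten $A$-$B$ slots split \emph{deterministically} given $C$ into five out-slots and five in-slots. For $v\in B_1$ this is false. The configuration reveals a Step I second vertex only when the first vertex of that edge is already saturated, or when the second vertex lies in $B_2$; hence the appearances of a blossomed $v$ as the second vertex of a \emph{neglected} edge (first vertex not yet saturated and not in $B_2$) are not recorded in $C$. Consequently $C$ does not determine whether rule 1/2, rule 3, or rule 4 of Section \ref{subsection_orientationrule2} governs $v$'s Step II alternation (rules 3 and 4 are distinguished precisely by unrevealed second-vertex slots), nor, under rule 3, which slot carries the first neglected edge and hence what its orientation is. So after conditioning on $C$ the out/in split of the ten $A$-$B$ slots is a genuine random variable, not a constant. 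The conclusion of (ii) still holds, but the missing step is exactly the one the paper supplies: the split is measurable with respect to the unrevealed Step I second vertices, which are independent of the unrevealed Step II endpoints; since those ten endpoints are i.i.d.\ uniform on $A$, partitioning them into two groups of five by an independent (possibly random) rule still yields $\OUT(v)$ and $\IN(v)$ each distributed as five independent copies of $\RV(A)$. Without this exchangeability observation your part (ii) does not go through as written --- you correctly flag the four starting-direction cases as the delicate point, but the verification you propose (that all needed data sits in $C$) fails there rather than succeeds.
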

\begin{proof}
For a vertex $v \in V$, the configuration contains the information
of the time of arrival of the edges that will be used to construct
the set $\OUT(v)$ and $\IN(v)$.

If $v$ is a saturated vertex, then
we even know which edges belong to $\OUT(v)$ and $\IN(v)$ (if there
are no $B_2$ vertices connected to the first 12 appearances of $v$
as a first vertex, then the first five odd appearances of $v$ as a
first vertex will be used to construct $\OUT(v)$, and the first five
even appearances of $v$ as a first vertex will be used to construct
$\IN(v)$). Since the non-revealed vertices are independent random
vertices in $V'$, we know that
$\OUT(v)$ and $\IN(v)$ of these vertices consist of 5 independent
copies of $\RV(V')$.

If $v$ blossoms, then the analysis is similar to that of the
saturated vertices. However, even though the configuration
contains the information of which 10 edges will be used to construct
$\OUT(v)$ and $\IN(v)$, the decision of whether the odd edges or the
even edges will be used to construct $\OUT(v)$ depends on the
particular edge process (this is determined by the orientation rule
at Step I). However, since the other endpoints are independent
identically distributed random vertices in $A$, the distribution of
$\OUT(v)$ and $\IN(v)$ is not be affected by the previous edges, and
is always $\RV(A)$ (this is analogous to the fact that the
distribution of the outcome of a coin flip does not depend on
whether the initial position was head or tail).
\end{proof}

\subsection{A small 1-factor} \label{subsection_small1factor}
The main result that we are going to prove in this section is summarized in the following proposition:

\begin{PROP} \label{prop_fiveinout1factor}
Conditioned on the random edge process having a typical configuration,
there exists $\whp$ a 1-factor of $\FIVEINOUT$ containing at most
$2\log n$ cycles, and in which at least $9/10$ proportion of each
cycle are saturated vertices.
\end{PROP}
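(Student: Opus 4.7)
The plan is to construct the 1-factor in two steps: first, apply a \emph{compression} operation that eliminates the restricted vertices and yields a graph on which Frieze's bipartite matching machinery can be adapted; second, transport the matching back to $\FIVEINOUT$ and analyze the cycle structure. The main obstacle is that restricted vertices have in- and out-degree only 1 in $\FIVEINOUT$, which breaks the clean 5-in 5-out symmetry that drives Frieze's analysis of a uniform random permutation.

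For the compression, note that every restricted vertex $v$ has a unique in-neighbor $u$ and a unique out-neighbor $w$ in $\FIVEINOUT$, so any 1-factor must use both edges $u \to v$ and $v \to w$. I would therefore contract each length-two path $u \to v \to w$ with $v$ restricted to a single edge $u \to w$, producing a compressed directed graph $H$ on $V' := V \setminus B_2$. Typicality condition (v) (restricted vertices pairwise at distance at least $3$ in $G_{m_*}$) ensures that distinct compressions do not interfere, while condition (iii) provides the neglected edge needed to define $\OUT(v)$ and $\IN(v)$ for buds. By Proposition \ref{prop_bipdistribution}, in $H$ the in- and out-neighborhoods of each saturated vertex are five i.i.d.\ uniform vertices of $V'$, and those of each blossomed vertex are five i.i.d.\ uniform vertices of $A$, up to a negligible $O(|B_2|)$ perturbation from compression.

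I would then find a 1-factor of $H$ by forming the natural bipartite graph $G'\bigl(V' \cup (V')^{*}, E'\bigr)$ and applying Hall's theorem. The verification of Hall's condition is the standard expansion argument for random 5-regular bipartite graphs: a union bound over potential violating sets $S$ of various sizes, treating small, intermediate, and large $|S|$ separately using the 5-fold i.i.d.\ sampling of neighbors. The presence of $|B_1| = o(n/\log^2 n)$ blossomed vertices whose neighbors are constrained to $A$ rather than $V'$ is easily absorbed by this bound. The resulting matching gives a 1-factor of $H$; reinserting each restricted vertex along its compressed edge lifts this to a 1-factor of $\FIVEINOUT$ without creating new cycles.

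For the cycle count, the matching in $G'$ corresponds to a permutation $\pi$ on $V'$ whose distribution, while not exactly uniform, is close enough that Frieze's analysis of the 5-in 5-out 1-factor carries over to give $\whp$ at most $(1+o(1))\log n \le 2\log n$ cycles; reinserting the restricted vertices only lengthens existing cycles. The proportion statement is more delicate, since short cycles mostly composed of blossomed or reinserted restricted vertices must be excluded. I would argue this by combining (a) the observation that $|B_1|+|B_2| = o(n/\log^2 n)$, so the overall density of non-saturated vertices is tiny, with (b) a union bound over cycle lengths $\ell$ showing that the probability some cycle of $\pi$ contains more than a $1/20$ fraction of blossomed vertices is $o(1)$; since at most $|B_2| = O(\log^{13} n)$ restricted vertices are reinserted across $O(\log n)$ cycles, the slack from $1/20$ to $1/10$ more than absorbs these reinsertions. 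The most delicate step will be making the "near-uniform" claim quantitative enough to apply standard cycle-length concentration results, which I expect to be a routine but technical calculation starting from the explicit distribution in Proposition \ref{prop_bipdistribution}.
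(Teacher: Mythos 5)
Your compression of the restricted vertices and your use of Hall's theorem are both workable (the paper keeps $B_2$ inside the bipartite graph $\BIP$ and handles it in the Hall condition via the distinctness of the sets $\OUT(v)$, $v\in B_2$, which is essentially equivalent to your contraction), and your proportion argument is in the same spirit as Lemma \ref{lemma_satproportion}. The genuine gap is in the cycle count. You pass from ``a perfect matching of $G'$ exists \whp'' to ``the corresponding permutation $\pi$ has a distribution close enough to uniform that Frieze's cycle analysis carries over,'' and you defer this as a routine quantitative calculation. It is not: Hall's theorem only gives existence and does not by itself induce any distribution on matchings, and the actual edge distribution of $\FIVEINOUT$ is \emph{far} from that of a uniform 5-in 5-out graph in any naive sense --- saturated vertices sample neighbors from $V\setminus B_2$, blossomed vertices sample only from $A$, and restricted vertices have deterministic neighbors. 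A ``near-uniform'' bound in total variation over permutations of $V'$ is simply false here, and without it the number of cycles of whichever matching you pick could a priori be $\omega(\log n)$.

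The paper's resolution, which is the core of the argument and which your proposal is missing, is Lemma \ref{lem_suff_cod}: one does not approximate uniformity but instead exploits an \emph{exact} symmetry. Fixing an arbitrary matching-selector $f$, one defines $\Phi(\Gamma) := \tau^{-1}f(\tau\Gamma)$ for a uniformly random permutation $\tau$ of $\hat{A}^*$, where $\hat{A}\subset A$ is a set under whose permutations the distribution of $\BIP$ is exactly invariant (Lemma \ref{lemma_bipinvariant}). Projecting the resulting 1-factor onto $\hat{A}$ by deleting the vertices of $V\setminus\hat{A}$ from each cycle (a projection that preserves the number of cycles because every cycle meets $\hat{A}$, by Lemma \ref{lemma_satproportion}) yields an \emph{exactly} uniform random permutation of $\hat{A}$, to which the classical $2\log n$ bound applies verbatim. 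If you want to complete your route, you need to supply an analogue of this symmetrization step --- identifying an invariant vertex class and a randomized matching rule --- rather than a perturbative comparison to the uniform distribution.
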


Throughout this section, rather than vaguely conditioning on
the process having a typical configuration, we will consider a fixed typical
configuration ${\bf c}$ and condition on the event that the edge
process has configuration ${\bf c}$. Proposition \ref{prop_fiveinout1factor}
easily follows
once we prove that there exists a Hamilton cycle $\whp$ under this
assumption. The reason we do this more precise conditioning
is to fix the sets $A, B, B_1,
B_2$ and the edges incident to vertices of $B_2$
(note that these are determined solely by the configuration).
In our later analysis, it is crucial to have these fixed.

To prove Proposition \ref{prop_fiveinout1factor}, we represent
the graph $\FIVEINOUT$ as a certain bipartite graph in which a perfect
matching corresponds to the desired 1-factor of the original graph
$D_{m^*}$. Then using the edge distribution of $\FIVEINOUT$
given in the previous section, we will show that the bipartite
graph $\whp$ contains a perfect matching.
The proof of Proposition \ref{prop_fiveinout1factor}
will be given at the end after a series of lemmas.

\medskip

Define a new vertex set $V^* = \{ v^* | \, v \in V \}$ as a copy of $V$,
and for sets $X \subset V$, use $X^*$ to denote the set of vertices in $V^*$
corresponding to $X$.
Then, in order to find a 1-factor in $\FIVEINOUT$, define an
auxiliary bipartite graph $\BIP(V,V^*)$ over the vertex set $V \cup
V^*$ whose edges are given as following: for every (directed) edge $(u,v)$ of
$\FIVEINOUT$, add the (undirected) edge $(u, v^*)$ to $\BIP$. Note that
perfect matchings of $\BIP$ has a
natural one-to-one correspondence with
1-factors of $\FIVEINOUT$. Moreover, the edge distribution of
$\BIP$ easily follows from the edge distribution of
$\FIVEINOUT$. We will say that $\FIVEINOUT$ is the {\em
underlying directed graph} of $\BIP$. A permutation $\sigma$ of $V^*$ acts on
$\BIP$ to construct another bipartite graph which has
edges $(v, \sigma(w^*))$ for all edges $(v,w^*)$ in $\BIP$.

Our plan is to find a perfect matching which is (almost) a uniform
random permutation, and show that this permutation has at most
$O(\log n)$ cycles (if it were a uniform random permutation, then
this is a well-known result, see, e.g., \cite{ErdTur}). Since our
distribution is not a uniform distribution, we will rely on the
following lemma. Its proof is rather technical, and to avoid
distraction, it will be given in the end of this subsection.

\begin{LEMMA}\label{lem_suff_cod}
Let $X$ be subset of $V$.
Assume that $\whp$, (i) $\BIP$ contains a perfect matching,
(ii) every cycle of the underlying directed graph $\FIVEINOUT$
contains at
least one element from $X$, and (iii) the edge
distribution of $\BIP$ is invariant under arbitrary permutations of
$X^*$. Then $\whp$, there exists a perfect matching which when
considered as a permutation contains at most $2\log n$ cycles.
\end{LEMMA}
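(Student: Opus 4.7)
The plan is to exploit the invariance in (iii) to reduce the cycle count of a suitably chosen matching of $\BIP$ to that of a uniform random permutation of $X$.

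By (iii), one can couple $\BIP$ with an independent copy $\BIP'$ of the same distribution and a uniformly random permutation $\sigma$ of $X^*$ (independent of $\BIP'$) so that $\BIP = \sigma(\BIP')$, where $\sigma$ acts on $\BIP'$ by relabeling its $X^*$-endpoints. Since (i) and (ii) hold whp for $\BIP'$, with probability $1-o(1)$ we can apply a deterministic rule to find a perfect matching $M'$ of $\BIP'$; viewing $M'$ as a permutation $\pi'$ of $V$ (identifying $V^*$ with $V$), every cycle of $\pi'$ contains an element of $X$. The image $M := \sigma(M')$ is then a perfect matching of $\BIP$ whose associated permutation is $\sigma\pi'$, with $\sigma$ extended to $V$ as the identity on $V\setminus X$.

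To count the cycles of $\sigma\pi'$, define the induced permutation $\phi'$ of $X$ by setting $\phi'(x)$ to be the first iterate of $\pi'$ starting from $x$ that returns to $X$. Since every cycle of $\pi'$ meets $X$, this is a well-defined permutation of $X$ whose cycles are in bijection with those of $\pi'$. The main calculation is that the induced permutation of $\sigma\pi'$ on $X$ equals $\sigma\phi'$: because $\sigma$ fixes $V\setminus X$, each maximal $V\setminus X$-arc of $\pi'$ is traversed identically by $\sigma\pi'$, and only the final ``landing step'' into $X$ is twisted by $\sigma$, producing $\sigma(\phi'(x))$. Moreover, any cycle of $\sigma\pi'$ entirely contained in $V\setminus X$ would, for the same reason, coincide with a cycle of $\pi'$ in $V\setminus X$, contradicting (ii). Hence the cycle count of $\sigma\pi'$ equals that of $\sigma\phi'$.

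Conditional on $\BIP'$ (and thus on $\phi'$), $\sigma$ is still uniform on $\text{Sym}(X)$ by independence, so $\sigma\phi'$ is a uniform random permutation of $X$. A classical computation shows that the number of cycles of such a permutation is distributed as $\sum_{i=1}^{|X|} \mathrm{Ber}(1/i)$, with mean and variance both $H_{|X|} = \log|X| + O(1)$; Chebyshev's inequality then gives at most $2\log n$ cycles whp. The delicate step, and the only one that requires real care rather than routine computation, is the identification of the induced permutation of $\sigma\pi'$ on $X$ with $\sigma\phi'$ — one must carefully track how $\sigma$, being the identity off $X$, interacts with the contracted arcs of $\pi'$ passing through $V\setminus X$.
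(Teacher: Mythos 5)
Your proposal is correct and is essentially the paper's argument in a different dress: your coupling $\BIP=\sigma(\BIP')$ with $\sigma$ uniform and independent is exactly the paper's symmetrization $\Phi(\Gamma)=\tau^{-1}f(\tau\Gamma)$, and your identification of the induced permutation of $\sigma\pi'$ on $X$ with $\sigma\phi'$ is precisely the paper's equivariance claim $\sigma\cdot\Pi(\phi)=\Pi(\sigma\cdot\phi)$ for the contraction map $\Pi$, after which both proofs conclude via the uniformity of $\sigma\phi'$ and the classical cycle count for uniform random permutations.
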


The next set of lemmas establish the fact that $\BIP$ satisfies all
the conditions we need in order to apply Lemma \ref{lem_suff_cod}.
First we prove that $\BIP$ contains a perfect matching. We use the
following version of the well-known Hall's theorem (see, e.g.,
\cite{Diestel}).
\begin{THM} \label{thm_halltheorem}
Let $\Gamma$ be a bipartite graph with vertex set $X \cup Y$ and $|X|=|Y|=n$. If for
all $X' \subset X$ of size $|X'| \leq n/2$, $|N(X')| \ge |X'|$ and for
all $Y' \subset Y$ of size $|Y'| \leq n/2$, $|N(Y')| \ge |Y'|$, then $G$
contains a perfect matching.
\end{THM}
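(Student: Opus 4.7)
The plan is to reduce to the classical form of Hall's theorem, which states that a bipartite graph with parts of equal size $n$ admits a perfect matching if and only if $|N(S)| \ge |S|$ for every subset $S$ on one side. Suppose for contradiction that $\Gamma$ has no perfect matching; then the classical theorem yields some $X' \subseteq X$ with $|N(X')| < |X'|$. If $|X'| \le n/2$, this immediately contradicts the hypothesis on $X$, so I may assume $|X'| > n/2$ and transfer the violation to the opposite side.

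The key step is a standard complement trick: set $Y' := Y \setminus N(X')$. Any vertex of $Y'$ has no neighbor in $X'$ (otherwise it would lie in $N(X')$), so $N(Y') \subseteq X \setminus X'$, which gives $|N(Y')| \le n-|X'| < n/2$. Meanwhile $|Y'| = n - |N(X')| > n - |X'|$, so $|N(Y')| < |Y'|$; that is, Hall's condition also fails on the $Y$-side, but now witnessed by the set $Y'$.

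The only remaining subtlety is that $Y'$ itself might exceed $n/2$ and hence not directly contradict the hypothesis. This is handled by a simple trimming: since $|N(Y')| < n/2 \le |Y'|$, pick any $Y'' \subseteq Y'$ with $|N(Y')| < |Y''| \le n/2$. Then $|N(Y'')| \le |N(Y')| < |Y''|$, contradicting the hypothesis on $Y$. I do not anticipate any real obstacle here: the whole argument is symmetric in $X$ and $Y$, and the only non-routine ingredient is the complement trick, which converts a Hall violation on a large subset of one side into one on a small subset of the other side.
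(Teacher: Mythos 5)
The paper does not prove this statement at all --- it is quoted as a known variant of Hall's theorem with a pointer to Diestel --- so there is no in-paper argument to compare yours against; your route (reduce to the classical Hall condition, then use the complement trick to convert a violation on a large subset of one side into one on a small subset of the other) is exactly the standard proof of this folklore lemma. For even $n$ your argument is complete: there $|X'|>n/2$ forces $|N(Y')|\le n-|X'|\le n/2-1$, so $Y''$ of size $|N(Y')|+1\le n/2$ exists inside $Y'$ (using $|Y'|\ge n-|X'|+1\ge |N(Y')|+1$) and gives the contradiction. But for odd $n$ the trimming step has a genuine off-by-one gap: you need an integer $|Y''|$ with $|N(Y')|<|Y''|\le n/2$, i.e.\ $|Y''|\le (n-1)/2$, while all you know is $|N(Y')|\le n-|X'|\le (n-1)/2$; in the extremal case $|X'|=(n+1)/2$ and $|N(Y')|=(n-1)/2$ no such integer exists.

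This gap cannot be patched, because the statement as written is actually false for odd $n$. Take $X=X_1\cup X_2$ and $Y=Y_1\cup Y_2$ with $|X_1|=|Y_2|=(n+1)/2$ and $|X_2|=|Y_1|=(n-1)/2$, and let $\Gamma$ be the disjoint union of complete bipartite graphs on $X_1\cup Y_1$ and on $X_2\cup Y_2$. Every nonempty $X'$ with $|X'|\le n/2$ (hence $|X'|\le (n-1)/2$) has $|N(X')|\ge (n-1)/2\ge |X'|$, and symmetrically on the $Y$ side, yet $|N(X_1)|=(n-1)/2<|X_1|$, so there is no perfect matching. The correct formulation requires Hall's condition for all sets of size at most $\lceil n/2\rceil$; with that hypothesis your proof goes through verbatim, since then $|N(Y')|\le n-|X'|\le \lfloor n/2\rfloor-1$ and $|Y''|=|N(Y')|+1$ works. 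The discrepancy is harmless for the application in Lemma \ref{lemma_bippm}, whose union-bound estimates cover sets of size up to $\lceil n/2\rceil$ with ample room, but as a proof of the theorem exactly as stated your argument necessarily breaks at the trimming step, and it would be worth recording the corrected hypothesis.
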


\begin{LEMMA} \label{lemma_bippm}
The graph $\BIP$ contains a perfect matching $\whp$.
\end{LEMMA}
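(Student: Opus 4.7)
The plan is to apply Hall's theorem (Theorem~\ref{thm_halltheorem}) to $\BIP$. Since by Proposition~\ref{prop_bipdistribution} the distributions of the $V$-side neighborhoods (via $\OUT$) and the $V^*$-side neighborhoods (via $\IN$) are structurally identical, it suffices, conditionally on a typical configuration, to verify $\whp$ that every $S \subseteq V$ with $|S| \leq n/2$ satisfies $|N_{\BIP}(S)| \geq |S|$; the corresponding $V^*$-side condition will follow by an identical argument.

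If $S \subseteq B_2$, property $(v)$ of a typical configuration (Definition~\ref{def_typicalconfiguration}) --- that restricted vertices lie at pairwise distance at least $3$ in $G_{m_*}$ --- forces the singleton out-neighbors of the vertices in $S$ to be distinct, whence $|N_{\BIP}(S)| = |S|$. Otherwise, write $\mathbf{u} := S \cap (A \cup B_1)$ of size $s' \geq 1$ and let $X := \bigcup_{u \in \mathbf{u}} N_{\BIP}(u)$, a random set of at most $5s'$ vertices whose individual picks are i.i.d.\ uniform in $V'$ or $A$. Writing $N_{\BIP}(B_2) = \{w_v^* : v \in B_2\} \subseteq V^*$ for the deterministic set of size at most $|B_2| = O(\log^{13} n)$, the key reduction (using that $|N_{\BIP}(B)| = |B|$ for every $B \subseteq B_2$ by the distance-$3$ property) is
\[
\exists B \subseteq B_2 \text{ such that } \mathbf{u} \cup B \text{ violates Hall} \iff |X \setminus N_{\BIP}(B_2)| \leq s' - 1,
\]
with the optimal witness being $B = \{v \in B_2 : w_v^* \in X\}$. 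This collapses the choice of $S \cap B_2$ into a single event depending only on $\mathbf{u}$, and is the crucial step that avoids an otherwise lethal $2^{|B_2|}$ factor in the union bound.

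Given this reduction, a union bound over the extension $T' \subseteq V^* \setminus N_{\BIP}(B_2)$ of size $s'-1$ catching the picks outside $N_{\BIP}(B_2)$ yields
\[
\BFP\bigl(|X \setminus N_{\BIP}(B_2)| \leq s' - 1\bigr) \leq \binom{n}{s'-1}\Bigl(\tfrac{|B_2|+s'-1}{(1-o(1))n}\Bigr)^{5s'}.
\]
Multiplying by the $\binom{n}{s'}$ choices of $\mathbf{u}$ and summing,
\[
\sum_{s'=1}^{n/2}\binom{n}{s'}\binom{n}{s'-1}\Bigl(\tfrac{O(|B_2|+s')}{n}\Bigr)^{5s'} = o(1),
\]
since the $s'=1$ term contributes $O(\log^{65}n/n^4)$ and drives the small-$s'$ regime, while in the large-$s'$ regime the term is dominated by the standard random-bipartite estimate $e^{2s'}(s'/n)^{3s'}$, which is exponentially small for $s' \leq n/2$. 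The main obstacle, resolved by the reduction above, is to prevent the (up to $\log^{13} n$) restricted vertices from inflating the union bound through combinatorial choices of $S \cap B_2$; once that is collapsed into the single event $|X \setminus N_{\BIP}(B_2)| \leq s'-1$, the remaining computation is routine.
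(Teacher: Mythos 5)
Your proposal is correct and follows essentially the same route as the paper: verify Hall's condition, handle subsets of $B_2$ via the distance-$3$ property of restricted vertices, and for the remaining vertices reduce to showing $|N(\mathbf{u}) \cap (V^* \setminus N(B_2))| \ge |\mathbf{u}|$, bounded by a first-moment union bound over witness sets $T'$ of size $s'-1$ using the i.i.d.\ uniform distribution of the $\OUT$-picks from Proposition~\ref{prop_bipdistribution}. Your explicit ``iff'' reduction is just a spelled-out version of the paper's observation that the two cases combine to give Hall for arbitrary $D$, and your summation matches the paper's estimate $k\binom{n}{k}^2\bigl(\frac{|N(B_2)|+|N^*|}{(1-o(1))n}\bigr)^{5k}$.
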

\begin{proof}
We will verify Hall's condition for the graph $\BIP$ to prove the
existence of a perfect matching. Recall that $\BIP$ is a bipartite
graph over the vertex set $V \cup V^*$.

Let us show that every set $D \subset V$ of size $|D| \le n/2$ satisfies $|N(D)| \ge |D|$.
This will be done in two steps. First, if $D \subset B_2$, then this
follows from the fact that $\OUT(v)$ are distinct sets for all $v \in B_2$,
(if they were not distinct, then there will be two restricted vertices which are
at distance 2 away,
and it violates property (v) of Definition \ref{def_typicalconfiguration}).
Second, we prove that for $D \subset V \setminus B_2$,
\[ \left|\, N(D) \cap (V^* \setminus N(B_2)) \,\right| \ge |D|. \]
It is easy to see that the above two facts prove our claim.

Let $D \subset V \setminus B_2$ be a set of size at most $k \le
n/2$. The inequality $|N(D) \cap (V^* \setminus N(B_2)) | < |D|$ can
happen only if there exists a set $N^* \subset V^* \setminus N(B_2)$
such that $|N^*| < k$, and for all $v \in D$ all the vertices of
$\OUT(v)$ belong to $N^* \cup N(B_2)$. Since $D \subset V \setminus
B_2$, every vertex in $D$ has 5 random neighbors distributed
uniformly over some set of size $(1-o(1))n$, and thus the
probability of the above event happening is at most,
\[ k \binom{n}{k}^2 \left(\frac{|N(B_2)| + |N^*|}{(1 - o(1))n}\right)^{5k} \le \left(\frac{e^2 n^2 (\log^{13} n + k)^5}{ k^2 \cdot (1 - o(1))n^5}\right)^k \le \left(\frac{9 (\log^{13}n + k)^5)}{k^2n^3}\right)^k. \]
For the range $9n/20 \le k \le n/2$, we will use the following bound
\[ k \binom{n}{k}^2 \left(\frac{\log^{13}n + k}{(1 - o(1))n}\right)^{5k} \le 2^{2n} \left(\frac{1+o(1)}{2}\right)^{9n/4} \le 2^{-n/5}. \]
Summing over all choices of $k$ we get,
\begin{align*}
 &\sum_{k=1}^{n/2} k \binom{n}{k}^2 \left(\frac{\log^{13}n + k}{(1 - o(1))n}\right)^{5k} \\
 \le&
 \sum_{k=1}^{\log^{14}n} \left(\frac{9(\log^{13} n + k)^5}{ k^2 n^3}\right)^k
 + \sum_{k=\log^{14}n }^{9n/20} \left(\frac{9 (\log^{13} n + k)^5}{ k^2 n^3}\right)^k
 + \sum_{k=9n/20}^{n/2} 2^{-n/5}
 \\
 \le& \sum_{k=1}^{\log^{14}n} \left(\frac{10 \log^{70} n}{n^3}\right)^k  +
\sum_{k=\log^{14}n}^{9n/20} \left(\frac{10 k^3}{ n^3 }\right)^k + o(1) = o(1).
\end{align*}
This finishes the proof that $\whp$ $|N(D)| \ge |D|$ for all $D \subset V$ of
size at most $n/2$. Similarly, for sets $D^* \subset V^*$ of size
$|D^*| \le n/2$, using the sets $\IN(v)$ instead of
$\OUT(v)$ we can show that $\whp$ $|N(D^*)| \ge |D^*|$ in
$\BIP$.
\end{proof}

For restricted vertices $v$, the sets $\OUT(v)$ and $\IN(v)$ are of size 1
and are already fixed since we fixed the configuration. Thus the edge corresponding
to theses vertices will be in $\BIP$. Let
\[ \hat{A} = A \setminus (\cup_{v \in B_2} \OUT(v)), \]
and let $\hat{A}^*$ be the corresponding set inside $V^*$ (note that
$\hat{A}$ and $\hat{A}^*$ are fixed sets). This set will
be our set $X$ when applying Lemma \ref{lem_suff_cod}.
We next
prove that every cycle of $\FIVEINOUT$ contains vertices of $\hat{A}$.


\begin{LEMMA} \label{lemma_satproportion}
$Whp$, every cycle $C$ of $\FIVEINOUT$ contains at least
$\left\lceil \frac{9}{10}|C| \right\rceil$ vertices of $\hat{A}$.
\end{LEMMA}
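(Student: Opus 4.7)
Let $W = V \setminus \hat A$. Since $W \subseteq B_1 \cup B_2 \cup \bigcup_{v \in B_2}\OUT(v)$, properties (i) and (ii) of Definition~\ref{def_typicalconfiguration} give $|W| \leq |B_1| + 2|B_2| = O(n(\log\log n)^{12}/\log^2 n)$; write $\alpha = |W|/n = o(1/\log^{10}n)$. The lemma's conclusion is equivalent to: no cycle $C$ of $\FIVEINOUT$ contains more than $\lfloor |C|/10\rfloor$ vertices in $W$ (call such a cycle \emph{bad}). The plan is the first moment method: fix a typical configuration, condition on it, show $\BBE[\#\text{bad cycles}] = o(1)$, and invoke Markov's inequality.

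By Proposition~\ref{prop_bipdistribution}, for each $w \in A \cup B_1$ the sets $\OUT(w)$ and $\IN(w)$ are i.i.d.\ samples of size $5$ from a set of size $(1-o(1))n$, and are independent across distinct $w$. A union bound over OUT and IN shows that any specific edge between two non-$B_2$ vertices belongs to $\FIVEINOUT$ with probability at most $C_0/n$ for some absolute constant $C_0$, while edges leaving a $B_2$-vertex are deterministic given the configuration.

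For cycles \emph{avoiding} $B_2$, the number of length-$\ell$ cyclic orderings with $k$ vertices in $W$ is at most $\binom{\ell}{k}\alpha^k n^\ell/\ell$, so
\[ \BBE[\#\text{bad }B_2\text{-avoiding cycles of length }\ell] \leq \frac{C_0^\ell}{\ell}\sum_{k > \ell/10}\binom{\ell}{k}\alpha^k. \]
Using $\binom{\ell}{k}\alpha^k \leq (10e\alpha)^k$ for $k \geq \ell/10$ (via $\binom{\ell}{k}\leq(e\ell/k)^k$), the inner sum is $O((10e\alpha)^{\ell/10})$; since $C_0^{10}\cdot 10e\alpha = o(1)$, summing over $\ell$ yields $o(1)$. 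For cycles \emph{through} a $B_2$-vertex, property~(v) forces any two $B_2$-vertices in a cycle to be at distance $\geq 3$ in $\FIVEINOUT$, so $k_2 \leq \ell/3$, and each $B_2$-vertex fixes its successor; a structural count (orderings $\leq |B_2|^{k_2}$, intermediate free vertices $\leq n^{\ell-2k_2}$, random edges contributing $(C_0/n)^{\ell-k_2}$) bounds the expected number of length-$\ell$ cycles with exactly $k_2$ $B_2$-vertices by $(|B_2|/n)^{k_2}C_0^{\ell-k_2}/\ell$. For a bad such cycle, split at $k_2 = \ell/40$: either $(|B_2|/n)^{k_2}$ dominates (and $|B_2| \leq \log^{13}n$ gives a $(C_0^{40}\log^{13}n/n)^{\ell/40}$ bound), or at least $\ell/20$ of the $\ell-2k_2$ free positions must lie in $W\setminus B_2$, and a Chernoff-type factor $(10e\alpha)^{\ell/20}$ appears on top as before; in both subcases the resulting sum over $\ell$ is $o(1)$.

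The $B_2$-avoiding case is a standard first-moment computation; the main obstacle is the $B_2$-containing case, where the deterministic outgoing edges from $B_2$-vertices break the uniform structure and complicate direct enumeration. The two typicality inputs that tame this are $|B_2| \leq \log^{13}n$ and the distance-$3$ separation from property~(v), which together ensure that any cycle contains few $B_2$-vertices and long segments between them, so the $B_2$-containing contribution remains negligible.
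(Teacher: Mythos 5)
Your proposal is correct and follows essentially the same first-moment argument as the paper: fix a typical configuration, bound $|V\setminus\hat A|\le |B_1|+2|B_2|$, and count ordered cycles with too many vertices outside $\hat A$, treating cycles through $B_2$ separately via $|B_2|\le\log^{13}n$ and the distance-$3$ separation (the paper runs the same computation with a single sum over the two parameters $a=k_2$ and $\ell$ rather than your case split at $k_2=\ell/40$, but the content is identical). Two harmless slips: $\alpha=O((\log\log n)^{12}/\log^2 n)$ is \emph{not} $o(1/\log^{10}n)$, though you only ever use $\alpha=o(1)$; and a $B_2$-vertex has its predecessor forced as well as its successor (its unique in-neighbour in $\FIVEINOUT$ is the fixed $\IN(w)$, since no random $\OUT$- or $\IN$-set meets $B_2$), so the in-edge of a $B_2$-vertex is deterministic rather than a probability-$C_0/n$ event --- your bound survives only because summing the spurious $C_0/n$ over the $n$ free predecessor choices still dominates the single forced choice, and it would be cleaner to fix $3k_2$ positions and count $\ell-2k_2$ random edges as the paper does.
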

\begin{proof}
Recall that by Proposition \ref{prop_bipdistribution}, for vertices
$v \in V \setminus B_2$, the set $\OUT(v)$ and $\IN(v)$ are
uniformly distributed over $V \setminus B_2$, or $A$. Therefore, for a vertex
$w \in B_2$, the only out-neighbor of $w$ is $\OUT(w)$, and the only
in-neighbor is $\IN(w)$ (note that they are both fixed since we
fixed the configuration). Also note that,
\[
|V \setminus \hat{A}| \leq |V \setminus A| + |B_2| \le |B_1| + 2|B_2|
\le \frac{(\log\log n)^{12}}{\log^2 n}n + 2\log^{13} n \le \frac{n}{\log n}.
\]

We want to show that in the graph $\FIVEINOUT$, $\whp$ every cycle
of length $k$ has at most $k/10$ points from $V \setminus \hat{A}$,
for all $k=1,\ldots, n$. Let us compute the expected number of
cycles for which this condition fails and show that it is $o(1)$.
First choose $k$ vertices $v_1, v_2, \cdots, v_k$ (with order) and
assume that $a$ of them are in $B_2$. Then since we already know the
(unique) out-neighbor and in-neighbor for vertices in $B_2$, for the
vertices $v_1, \cdots, v_k$ to form a cycle in that order, we must
fix $3a$ positions ($a$ for the vertices in $B_2$, and $2a$ for the
in-, and out-neighbors of them by property $(v)$ of Definition
\ref{def_typicalconfiguration}). Assume that among the remaining
$k-3a$ vertices, $\ell$ vertices belong to $V \setminus (\hat{A}
\cup B_2)$. Then for there to be at least $\lceil k/10 \rceil$
vertices among $v_1, \cdots, v_k$ not in $\hat{A}$, we must have $3a
+ \ell \ge \lceil k/10 \rceil$. There are at most $3^k$ ways to
assign one of the three types $\hat{A}, B_2$, and $V \setminus
(\hat{A} \cup B_2)$ to each of $v_1, \cdots, v_k$. Therefore the
number of ways to choose $k$ vertices as above is at most
\[ 3^k \cdot n^{k-\ell-3a} |V \setminus \hat{A}|^\ell |B_2|^{a} \le 3^k \cdot n^{k-\ell-3a} \left(\frac{n}{\log n} \right)^\ell \left(\log^{13} n \right)^{a}  \]

There are $k - 2a$ random edges which has to be present in order to
make the above $k$ vertices into a cycle. For all $i \le k-1$, the
pair $(v_i, v_{i+1})$ can become an edge either by $v_{i+1} \in
\OUT(v_i)$ or $v_i \in \IN(v_{i+1})$ (and also for the pair $(v_1,
v_k)$). There are two ways to choose where the edge $\{v_i,
v_{i+1}\}$ comes from, and if both $v_i$ and $v_{i+1}$ are not in
$B_2$, then $\{v_i, v_{i+1}\}$ will become an edge with probability
at most $\frac{5}{(1-o(1))n}$. Therefore the probability of a fixed
$v_1, \cdots, v_k$ chosen as above being a cycle is at most
$2^{k-2a} \left(\frac{5}{(1-o(1))n} \right)^{k-2a}$, and the
expected number of such cycles is at most
\begin{align*}
&2^{k-2a}
\left(\frac{5}{(1-o(1))n} \right)^{k-2a} \cdot 3^k \cdot n^{k-\ell-3a}
\left(\frac{n}{\log n} \right)^\ell \left(\log^{13} n \right)^{a} \\
\le &\left( \frac{\log^{13}n}{n} \right)^{a} \cdot \left( \frac{1}{\log n}\right)^\ell \cdot (30 + o(1))^k \\
\le &\left( \frac{\log^{13}n}{n} \right)^{a} \cdot \left( \frac{1}{\log n}\right)^{\lceil k/10 \rceil - 3a} \cdot (30 + o(1))^k
\le \left( \frac{\log^{16} n}{n} \right)^{a} \cdot \left( \frac{40}{(\log n)^{1/10}}\right)^{k}.
\end{align*}
where we used $3a + \ell \ge \lceil k/10 \rceil$ for the second
inequality. Sum this over $0 \le \ell \le k$ and $0 \le a \le k$ and
we get
\begin{align*}
\sum_{k=1}^{n} \sum_{\ell=0}^{k} \sum_{a=0}^{k} \left( \frac{\log^{16} n}{n} \right)^{a} \cdot \left( \frac{40}{(\log n)^{1/10}}\right)^{k}
= O\left( \sum_{k=1}^{n} (k+1)\left( \frac{40}{(\log n)^{1/10}}\right)^{k}\right) = o(1),
\end{align*}
which proves our lemma.

\end{proof}

The following simple observation is the last ingredient
of our proof.

\begin{LEMMA} \label{lemma_bipinvariant}
The distribution of $\BIP$ is invariant under the action of an
arbitrary permutation of $\hat{A}^*$.
\end{LEMMA}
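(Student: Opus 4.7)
The plan is to produce, for each permutation $\sigma$ of $\hat{A}^*$, a measure-preserving coupling that realizes $\sigma(\BIP)$ as the same deterministic functional $F$ applied to a relabeled family of samples with the same joint law as the original. I will start from Proposition~\ref{prop_bipdistribution}: conditional on a typical configuration, the tuples $\OUT(v)$ and $\IN(v)$ are independent $5$-tuples of i.i.d.\ copies of $\RV(V')$ for saturated $v$ and of $\RV(A)$ for blossomed $v$, and they are independent across different $v$'s; for $v \in B_2$ both are singletons fixed by the configuration. The set $\hat{A} = A \setminus \bigcup_{v \in B_2} \OUT(v)$ is defined precisely so that no fixed $\OUT$-edge of a $B_2$-vertex ends in $\hat{A}^*$, while the fixed $\IN$-edge of $v \in B_2$ attaches to $v^* \in B_2^* \subseteq V^* \setminus \hat{A}^*$; consequently, every edge of $\BIP$ incident to $\hat{A}^*$ arises from the random samples.

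Fix an arbitrary permutation $\pi$ of $\hat{A}$, extended by the identity on $V \setminus \hat{A}$, and let $\sigma(w^*) := \pi(w)^*$ be the induced permutation of $V^*$. I will set
\[
\OUT'(v) := \pi(\OUT(v)) \text{ (applied elementwise)}, \qquad \IN'(v) := \IN(\pi^{-1}(v)),
\]
for every $v \in V$, and let $F$ be the functional producing $\BIP$ via the rule $(v, u^*) \in F(\OUT, \IN) \iff u \in \OUT(v) \text{ or } v \in \IN(u)$. Then $(v, u^*) \in F(\OUT', \IN')$ iff $\pi^{-1}(u) \in \OUT(v)$ or $v \in \IN(\pi^{-1}(u))$, which is exactly $(v, \sigma^{-1}(u^*)) \in F(\OUT, \IN)$; hence $F(\OUT', \IN') = \sigma(\BIP)$. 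It therefore suffices to show $(\OUT', \IN') \stackrel{d}{=} (\OUT, \IN)$.

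This I will verify marginally, exploiting that $\OUT$ and $\IN$ are independent families, that $\OUT'$ depends only on $\OUT$, and that $\IN'$ depends only on $\IN$. Since $\pi$ permutes $\hat{A}$ inside both $V'$ and $A$ and is the identity on their complements, the uniform distributions on $V'$ and on $A$ are invariant under $\pi$, so $\pi(\OUT(v)) \stackrel{d}{=} \OUT(v)$ for each $v \in A \cup B_1$; for $v \in B_2$, $\OUT(v) \cap \hat{A} = \emptyset$ gives $\pi(\OUT(v)) = \OUT(v)$. Independence across $v$ is preserved because $\pi$ acts separately on each tuple. For $\IN'$, the definition leaves $\IN(v)$ untouched whenever $v \notin \hat{A}$, while on $\hat{A}$ it merely permutes the index of the i.i.d.\ family $(\IN(v))_{v \in \hat{A}}$, which is exchangeable and hence distributionally invariant. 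Combining the two marginals yields $(\OUT', \IN') \stackrel{d}{=} (\OUT, \IN)$, and therefore $\sigma(\BIP) \stackrel{d}{=} \BIP$, as required. I do not anticipate any real obstacle; the only delicate point is that the $\OUT$-side of the coupling would fail if any fixed $\OUT(v)$ for $v \in B_2$ intersected $\hat{A}$, which is precisely what the definition of $\hat{A}$ rules out.
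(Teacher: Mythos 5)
Your proposal is correct and rests on exactly the three facts the paper invokes: the $\pi$-invariance of the uniform laws on $V'$ and $A$ (both of which contain $\hat{A}$) handles the relabeling of the random $\OUT$-targets, the exchangeability of the i.i.d.\ family $(\IN(v))_{v\in\hat{A}}$ over the saturated vertices handles the reindexing of the $\IN$-stars, and $\OUT(v)\cap\hat{A}=\emptyset$ for $v\in B_2$ disposes of the fixed edges. This is the paper's argument made explicit as a measure-preserving coupling, so it is essentially the same approach.
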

\begin{proof}
This lemma follows from the following three facts about the
distribution of $\FIVEINOUT$. First, all the saturated vertices have
the same distribution of $\IN$. Second, for the vertices $v \in V
\setminus B_2$, the distribution of $\OUT$ and $\IN$ is uniform over
a set which contains all the saturated vertices (for some vertices
it is $V \setminus B_2$, and for others it is $A$). Third, for the
vertices $v \in B_2$, the set $\OUT(v)$ lies outside $\hat{A}$ by
definition.
Therefore, the action of an arbitrary permutation
of $\hat{A}^*$ does not affect the distribution of $\BIP$.
\end{proof}

\noindent Note that here it is important that we fixed the
configuration beforehand, as otherwise the set $\hat{A}^*$ will
vary, and a statement such as Lemma \ref{lemma_bipinvariant} will
not make sense.

By combining Lemmas \ref{lem_suff_cod}, \ref{lemma_bippm}, \ref{lemma_satproportion}, and \ref{lemma_bipinvariant}, we obtain Proposition \ref{prop_fiveinout1factor}.

\begin{proof}[Proof of Proposition \ref{prop_fiveinout1factor}]
Lemmas \ref{lemma_bippm}, \ref{lemma_satproportion}, and \ref{lemma_bipinvariant} show
that the graph $\BIP$ has all the properties required for the
application of Lemma \ref{lem_suff_cod} (we use $X = \hat{A}$).
Thus we know that $\whp$,
$\FIVEINOUT$ has a 1-factor containing at most $2\log n$ cycles,
and in which at least 9/10 proportion of each cycle are
saturated vertices (second property by Lemma \ref{lemma_satproportion}).
\end{proof}

We conclude this subsection with the proof of Lemma \ref{lem_suff_cod}.

\begin{proof}[Proof of Lemma \ref{lem_suff_cod}]
For simplicity of notation, we use the notation $\mathcal{B}$
for the random bipartite graph $\BIP$.
Note that both a 1-factor over the vertex set $V$ and a perfect matching
of $(V,V^*)$, can be considered as a permutation of $V$.
Throughout this proof we will not distinguish between these
interpretations and treat 1-factors and perfect matchings also as
permutations.

First, let $f$ be an arbitrary function which for every bipartite
graph, outputs one fixed perfect matching in it. Then, given a
bipartite graph $\Gamma$ over the vertex set $V \cup V^*$, let
$\Phi$ be the random variable $\Phi(\Gamma) := \tau^{-1} f(\tau
\Gamma)$, where $\tau$ is a permutation of the vertices $\hat{A}^*$
chosen uniformly at random. Since the distribution of $\mathcal{B}$
and the distribution of $\tau \mathcal{B}$ are the same by condition
$(iii)$, for an arbitrary permutation $\sigma$ of $\hat{A}^*$,
$\Phi$ has the following property,
\begin{align}
\BFP(\Phi(\mathcal{B}) = \phi) &=  \BFP(\tau^{-1} f(\tau \mathcal{B}) = \phi) \stackrel{(*)}{=} \BFP((\tau\sigma)^{-1} f(\mathcal{\tau \sigma B}) = \phi) \nonumber\\&=  \BFP(\tau^{-1} f(\tau \sigma \mathcal{B}) = \sigma \phi)
\stackrel{(*)}{=} \BFP(\tau^{-1} f(\tau \mathcal{B}) = \sigma \phi) =  \BFP(\Phi(\mathcal{B}) = \sigma\phi). \label{eqn_maybe2}
\end{align}
In the $(*)$ steps, we used $(iii)$, and the fact that if $\tau$ is
a uniform random permutation of $\hat{A}^*$, then so is $\tau
\sigma$, and therefore, $\mathcal{B}, \tau \mathcal{B}$, and
$\tau\sigma\mathcal{B}$ all have identical distribution.

Define a map $\Pi$ from the 1-factors over the vertex set $V$ to the
1-factors over the vertex set $\hat{A}$ obtained by removing all the
vertices that belong to $V \setminus \hat{A}$ from every cycle. For
example, a cycle of the form $(x_1 x_2 y_1 y_2 x_3 y_3 x_4)$ will
become the cycle $(x_1 x_2 x_3 x_4)$ when mapped by $\Pi$ (where
$x_1, \ldots, x_4 \in \hat{A}$, and $y_1,y_2,y_3 \in V \setminus
\hat{A}$). Note that if all the original 1-factors contained at
least one element from $\hat{A}$, then the total number of cycles
does not change after applying the map $\Pi$. This observation
combined with condition $(ii)$ implies that it suffices to obtain a
bound on the number of cycles after applying $\Pi$.

Let $\sigma, \rho$ be permutations of the vertex set $\hat{A}^*$. We
claim that for every 1-factor $\phi$ of the vertex set $V$, the
equality $\sigma \cdot \Pi (\phi) = \Pi (\sigma \cdot \phi)$ holds.
This claim together with (\ref{eqn_maybe2}) gives us,
\begin{align*}
\BFP(\Pi (\Phi(\mathcal{B})) = \rho) &= \BFP( \Phi(\mathcal{B}) \in \Pi^{-1}(\rho)) \stackrel{(\ref{eqn_maybe2})}{=} \BFP( \sigma \Phi( \mathcal{B}) \in \Pi^{-1}(\rho) ) = \BFP( \Pi (\sigma \Phi( \mathcal{B})) = \rho ) \\
&= \BFP( \sigma \cdot \Pi(\Phi(\mathcal{B})) = \rho) = \BFP( \Pi (\Phi(\mathcal{B})) = \sigma^{-1} \rho).
\end{align*}
Since $\sigma$ and $\rho$ were an arbitrary permutation of the
vertex set $\hat{A}$, we can conclude that conditioned on there
existing a perfect matching, $\Pi(\Phi(\mathcal{B}))$ has a uniform
distribution over the permutations of $\hat{A}$. It is a well-known
fact (see, e.g., \cite{ErdTur}) that a uniformly random permutation
over a set of size $n$ has $\whp$ at most $2\log n$ cycles. Since
$\mathcal{B}$ $\whp$~contains a perfect matching by condition $(i)$,
it remains to verify the equality $\sigma \cdot \Pi (\phi) = \Pi
(\sigma \cdot \phi)$. Thus we conclude the proof by proving
this claim.

For a vertex $x \in \hat{A}$, assume that the cycle of $\phi$ which
contains $x$ is of the form $(\cdots x y_1 y_2 \cdots y_k x_+
\cdots)$ ($k \ge 0$) for $y_1,\ldots, y_k \in V \setminus \hat{A}$.
Then by definition $\Pi(\phi) (x) = x_+$, and thus $(\sigma \cdot
\Pi(\phi)) (x) = \sigma(x_+)$. On the other hand, since $\sigma$
only permutes $\hat{A}$ and fixes every other element of $V$, we
have $(\sigma \cdot \phi)(x) = \sigma(y_1) = y_1$, and $(\sigma
\cdot \phi)(y_i) = y_{i+1}$ for all $i \le k-1$, and $(\sigma \cdot
\phi)(y_k) = \sigma(x_+)$. Therefore the cycle in $\sigma \cdot
\phi$ which contains $x$ will be of the form $(\cdots x y_1 y_2
\cdots y_k \sigma(x_+) \cdots)$ , and then by definition we have
$(\Pi (\sigma \cdot \phi))(x) = \sigma(x_+)$.
\end{proof}

\subsection{Combining the cycles into a Hamilton cycle}
\label{subsection_combinecycles}

Assume that as in the previous subsection, we started with a fixed
typical configuration ${\bf c}$, conditioned on the edge process having
configuration ${\bf c}$, and found a 1-factor of $\FIVEINOUT$ by
using Proposition \ref{prop_fiveinout1factor}. Since this 1-factor
only uses the edges which have been used to construct the graph
$\FIVEINOUT$, it is independent of the $A$-$A$ edges in Step II that
we did not reveal. Moreover, by the definition of a typical
configuration, there are at least $\frac{1}{3} n \log n$ such edges.
Note that the algorithm gives a random direction to these edges.
So interpret this as receiving
$\frac{1}{3} n \log n$ randomly directed $A$-$A$ edges with repeated edges
allowed.
Then the problem of finding a directed Hamilton
cycle in $D_{m_*}$ can be reduced to the following problem.

\medskip

Let $V$ be a given set and $A$ be a subset of size $(1-o(1))n$.
Assume that we are given a 1-factor over this vertex set,
where at least $9/10$ proportion of each cycle lies in the set $A$.
If we are given $\frac{1}{3}n \log n$ additional $A$-$A$ edges
chosen uniformly at random,
can we find a directed Hamilton cycle?

\medskip

To further simplify the problem, we remove the vertices $V \setminus
A$ out of the picture. Given a 1-factor over the vertex set $V$,
mark in red, all the vertices not in $A$. Pick any red
vertex $v$, and assume that $v_-, v, v_+ \in V$ appear in this
order in some cycle of the given 1-factor. If $v_- \neq v_+$,
replace the three vertices $v_-, v, v_+$ by a new vertex $v'$, where
$v'$ takes as in-neighbors the in-neighbors of $v_-$, and as
out-neighbors, the out-neighbors of $v_+$. We call the above process
as a \emph{compression} of the three vertices $v_-, v, v_+$. A crucial
property of compression is that every 1-factor of the compressed
graph corresponds to a 1-factor in the original graph (with the same
number of cycles). Since a directed Hamilton cycle is also a
1-factor, if we can find a Hamilton cycle in the compressed graph,
then we can also find one in the original graph.

Now for each $v \in V \setminus A$, compress the three vertices
$v_-, v, v_+$ into a vertex $v'$ and mark it red if and only if
either $v_-$ or $v_+$ is a red vertex. This process always decreases
the number of red vertices. Repeat it until there are no red
vertices remaining, or $v_- = v_+$ for all red vertices $v$. As long
as there is no red vertex in a cycle of length 2 at any point of the
process, the latter will not happen. Consider a cycle whose length
was $k$ at the beginning. Since at least 9/10 proportion of each
cycle comes from $A$ and every compression decreases the number of
vertices by 2, at any time there will be at least $(8/10)k$ non-red
vertices, and at most $(1/10)k$ red vertices remaining in the cycle.
Thus if a cycle has a red vertex, then its length will be at least
9, and this prevents length 2 red cycles. So the compressing
procedure will be over when all the red vertices disappear. Note
that since $|V \setminus A| = |B| = o(n)$, the number of remaining
vertices after the compression procedure is over is at least $n-2|B|
=(1-o(1))n$. As mentioned above, it suffices to find a Hamilton
cycle in the graph after the compression process is over.

Another important property of this procedure is related to the
additional $A$-$A$ edges that we are given. Assume that $v$ is the
first red vertex that we have compressed, where the vertices $v_-,
v, v_+$ appeared in this order in some 1-factor. Further assume that
$v_-$ and $v_+$ are not red vertices. Then since the new vertex $v'$
obtained from the compression will take as out-neighbors the
out-neighbors of $v_+$, and in-neighbors the in-neighbors of $v_-$,
we may assume that this vertex $v'$ is a vertex in $A$ from the
perspective of the new $\frac{1}{3}n\log n$ edges that will be
given.

This observation shows that every pair of vertices of the compressed graph
has the same probability of being one of the new $\frac{1}{3}n\log n$
edges. Since the number of vertices reduced by $o(n)$,
only $o(n \log n)$ of the new edges will be lost because of the
compression. Thus $\whp$ we will be given
$(\frac{1}{3}-o(1))n \log n$ new uniform random edges
of the compressed graph.

\begin{THM} \label{lemma_findhc}
For a typical configuration ${\bf c}$, conditioned on the random edge
process having configuration ${\bf c}$, the directed graph
$D_{m_*}$ $\whp$~contains
a Hamilton cycle.
\end{THM}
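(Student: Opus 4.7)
The plan is to combine Proposition \ref{prop_fiveinout1factor} with Frieze's Hamilton-cycle-completion result (Theorem \ref{thm_friezephase23}) via the compression procedure already introduced at the start of Section \ref{subsection_combinecycles}. Fix a typical configuration $\mathbf{c}$ and condition on the event that the edge process has configuration $\mathbf{c}$. By Proposition \ref{prop_fiveinout1factor}, there is $\whp$ a 1-factor $F$ of $\FIVEINOUT$ with at most $2\log n$ cycles, in which at least $9/10$ of the vertices of every cycle lie in $A$. The edges used by $F$ are among those that the configuration already commits to $\FIVEINOUT$, and in particular are disjoint from the pool of $(1/3-o(1))n\log n$ unrevealed $A$-$A$ Step~II edges guaranteed by property $(vi)$ of Definition \ref{def_typicalconfiguration}. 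These unrevealed $A$-$A$ edges, once oriented by $\ALG$, are uniformly random ordered pairs in $A\times A$ with random orientations, independent of $F$.

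First I would run the compression procedure on $F$: mark every vertex of $V\setminus A$ red and, walking through red vertices, replace a triple $v_-, v, v_+$ in a cycle of $F$ by a single new vertex $v'$ that inherits $v_-$'s in-neighbor and $v_+$'s out-neighbor. The analysis in Section \ref{subsection_combinecycles} shows that since each cycle begins with at least $9/10$ of its vertices in $A$, no red cycle of length $2$ ever arises, so the procedure ends with all red vertices removed. Let $V'$ be the resulting vertex set; then $|V'|\ge n-2|B|=(1-o(1))n$ and $F$ induces a 1-factor $F'$ on $V'$ with the same (at most $2\log n$) number of cycles. Any directed Hamilton cycle of the compressed graph lifts to a directed Hamilton cycle of $D_{m_*}$, because each compression preserves the 1-factor structure.

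Next I would argue that the unrevealed $A$-$A$ edges translate into uniform random edges on $V'$. As observed right after the compression description, each compressed vertex $v'$ can be treated as a member of $A$ for the purpose of the yet-to-arrive edges, since its in-/out-slots inherit those of $v_-, v_+\in A$. Therefore every pair in $V'$ is equally likely to be the endpoints of each unrevealed edge, and at most $o(n\log n)$ edges are absorbed by compressed vertices (because $|V\setminus A|=o(n)$), leaving $(1/3-o(1))n\log n$ uniform random edges on $V'$ with random orientations, independent of $F'$.

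At this point Theorem \ref{thm_friezephase23} applies, taking the vertex set to be $V'$, the subset $L$ to be all of $V'$ (so the distance condition on $V'\setminus L$ is vacuous), and the 1-factor to be $F'$. The theorem yields $\whp$ a directed Hamilton cycle on $V'$, which lifts via the compression to a directed Hamilton cycle of $D_{m_*}$, completing the proof. The main obstacle in the plan is not any single hard calculation but rather carefully tracking which randomness is used where: one has to verify that conditioning on $\mathbf{c}$ and on the $F$-edges leaves the $A$-$A$ Step~II edges uniformly random, and that the measurability of the compression (which is entirely determined by $F$ and $\mathbf{c}$, not by those edges) lets us invoke Theorem \ref{thm_friezephase23} honestly on the compressed instance.
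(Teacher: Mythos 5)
Your proposal is correct and follows essentially the same route as the paper: invoke Proposition \ref{prop_fiveinout1factor} to obtain a 1-factor with at most $2\log n$ cycles and a $9/10$ saturated proportion per cycle, compress away the non-$A$ vertices, and apply Theorem \ref{thm_friezephase23} with $L$ equal to the whole compressed vertex set using the $(1/3-o(1))n\log n$ unrevealed $A$-$A$ edges. The bookkeeping you flag about which randomness remains unexposed is exactly the point the paper relies on as well.
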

\begin{proof}
By Proposition \ref{prop_fiveinout1factor}, there exists $\whp$ a perfect
matching of $\BIP$ which corresponds to a 1-factor in $D_{m_*}$
consisting of at most $2 \log n$ cycles. Also, at least $9/10$
proportion of the vertices in each cycle lies in $A$.
After using the compression argument which has been discussed above,
we may assume that we are given a 1-factor over some vertex set of
size $(1-o(1))n$. Moreover, the random edge process contains at least $(\frac{1}{3}-o(1))n\log n$ additional random directed edges (distributed uniformly over that set).
By Theorem \ref{thm_friezephase23} with $L$ being the whole vertex set, we can conclude that $\whp$~the compressed graph
contains a directed Hamilton cycle, and this in turn implies that
$D_{m_*}$ contains a directed Hamilton cycle.
\end{proof}

\begin{COR}
The directed graph
$D_{m_*}$ $\whp$~contains a Hamilton cycle.
\end{COR}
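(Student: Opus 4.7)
The plan is straightforward: the corollary is essentially a bookkeeping consequence of combining Theorem~\ref{lemma_findhc} with Lemma~\ref{lemma_typicalconfiguration}, using the law of total probability to remove the conditioning on a typical configuration. I do not expect any genuine obstacle here, since all the heavy lifting has already been done in the previous sections; the only thing to check is that the two $\whp$ statements glue together cleanly.

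First, I would recall the set-up. Let $\mathcal{T}$ denote the event that the random edge process has a typical configuration (in the sense of Definition~\ref{def_typicalconfiguration}). Lemma~\ref{lemma_typicalconfiguration} tells us that $\BFP(\mathcal{T}) = 1 - o(1)$. Next, for each fixed typical configuration $\mathbf{c}$, Theorem~\ref{lemma_findhc} asserts that, conditioned on the edge process having configuration $\mathbf{c}$, the directed graph $D_{m_*}$ contains a Hamilton cycle $\whp$. In other words, if $\mathcal{H}$ denotes the event that $D_{m_*}$ contains a directed Hamilton cycle, then for every typical $\mathbf{c}$ we have $\BFP(\mathcal{H} \mid \text{config} = \mathbf{c}) = 1 - o(1)$, with the $o(1)$ term uniform in $\mathbf{c}$.

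Now I would combine the two facts by conditioning on the configuration:
\[
\BFP(\mathcal{H}) \;\geq\; \sum_{\mathbf{c} \text{ typical}} \BFP(\mathcal{H} \mid \text{config} = \mathbf{c}) \, \BFP(\text{config} = \mathbf{c}) \;\geq\; (1 - o(1)) \, \BFP(\mathcal{T}) \;=\; 1 - o(1).
\]
This yields exactly the statement of the corollary. The only subtle point is that the $o(1)$ probability of failure inside Theorem~\ref{lemma_findhc} must be uniform over typical configurations $\mathbf{c}$; inspecting the proof, every bound used in Proposition~\ref{prop_fiveinout1factor} and in the compression/Hamiltonicity step via Theorem~\ref{thm_friezephase23} depends only on the quantitative parameters guaranteed by Definition~\ref{def_typicalconfiguration}, so uniformity is automatic. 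With this observation in hand the corollary follows in one line, and no further ideas are needed.
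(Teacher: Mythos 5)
Your proposal is correct and follows essentially the same route as the paper: condition on the configuration, apply Theorem~\ref{lemma_findhc} for each typical configuration, and use Lemma~\ref{lemma_typicalconfiguration} to discard the atypical ones via the law of total probability. Your explicit remark about uniformity of the $o(1)$ over typical configurations is a fair point that the paper leaves implicit, but it does not change the argument.
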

\begin{proof}
Let ${\bf e}$ be a random edge process.
Let $D = D_{m_*}({\bf e})$ and $\HAM$ be the collection of directed
graphs that contain a directed Hamilton cycle. For a configuration
${\bf c}$, denote by ${\bf e} \in {\bf c}$, the event that ${\bf e}$
has configuration ${\bf c}$. If ${\bf e} \in {\bf c}$ for some
typical configuration ${\bf c}$, then we say that ${\bf e}$ is
typical.

By Theorem \ref{lemma_findhc}, we know that for any typical
configuration ${\bf c}$, $\BFP(D \notin \HAM | {\bf e} \in {\bf c})
= o(1)$, from which we know that $\BFP(\{D \notin \HAM\} \cap
\{\textrm{${\bf e}$ is typical}\}) = o(1)$. On the other hand, by
Lemma \ref{lemma_typicalconfiguration} we know that the probability
of an edge process having a non-typical configuration is $o(1)$.
Therefore $\whp$, the directed graph $D$ is Hamiltonian
\end{proof}

\section{Going back to the original process}\label{sec:MainThmMod}

Recall that the distribution of the random edge process is slightly
different from that of the random graph process since it allows
repeated edges and loops. In fact, one can show that at time $m_*$,
the edge process $\whp$ contains at least $\Omega(\log^2n)$ repeated
edges. Therefore, we cannot simply condition on the event that the
edge process does not contain any repeated edges or loops to obtain
our main theorem for random graph processes.
Our next theorem shows that there exists an on-line algorithm
$\ALGPRIME$ which successfully orients the edges of the random graph
process.

\begin{THM}
There exists a randomized on-line algorithm $\ALGPRIME$ which
orients the edges of the random graph process, so that the resulting
directed graph is Hamiltonian $\whp$ at the time at which the
underlying graph has minimum degree 2.
\end{THM}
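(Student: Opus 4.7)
The plan is to reduce the theorem to the Corollary established at the end of Section \ref{section_findingham} (the main result for the random edge process) via a coupling. Define $\ALGPRIME$ so that, on input $\mathcal{G}$, it maintains an internal virtual random edge process $\mathcal{E}$ and runs $\ALG$ on $\mathcal{E}$. Between consecutive edges revealed by $\mathcal{G}$, $\ALGPRIME$ uses private randomness to sample ordered pairs from $\{1,\ldots,n\}^2$ uniformly; as long as a sampled pair is a loop or corresponds to an unordered pair already recorded in $\mathcal{E}$, $\ALGPRIME$ feeds it to $\ALG$ as an ``invisible'' pair that only updates $\ALG$'s state. When the next sample would be a fresh unordered pair, $\ALGPRIME$ pauses, waits for $\mathcal{G}$ to reveal its next edge $\{u,v\}$, flips an independent fair coin to form $(u,v)$ or $(v,u)$ as the next ``visible'' entry of $\mathcal{E}$, and commits the orientation that $\ALG$ assigns to this visible pair to $\{u,v\}$ in $\mathcal{G}$. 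A direct check confirms that the virtual $\mathcal{E}$ is genuinely distributed as a random edge process, that its underlying simple graph always equals $\mathcal{G}$, and hence that its stopping time $m_*$ corresponds (in real round-count) to the moment $\tau$ at which $\mathcal{G}$ reaches minimum degree~$2$.

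By the Corollary, the virtual $D_{m_*}$ is $\whp$ Hamiltonian. The output $D_\tau$ of $\ALGPRIME$ on $\mathcal{G}$ consists exactly of the first-visible-occurrence orientations in $\mathcal{E}$ of the non-loop unordered pairs. A directed edge of $D_{m_*}$ is absent from $D_\tau$ only if it is a loop or a ``second-occurrence'' orientation of a repeat (one to which $\ALG$ assigned the opposite direction from the first occurrence). Hamilton cycles never use loops, so the task reduces to showing that $\whp$ some Hamilton cycle of $D_{m_*}$ avoids all second-occurrence orientations; this is the main obstacle, which I would address last.

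I would resolve the obstacle by revisiting the constructive argument of Section \ref{section_findingham}. The key numerical input is that $\whp$ there are at most $O(\log^2 n)$ repeated unordered pairs in $\mathcal{E}$ up to time $m_*$, so at most $O(\log^2 n)$ directed edges of $D_{m_*}$ are candidates for being missing from $D_\tau$. For Phase~1, $\FIVEINOUT$ is built from $O(n)$ specific time slots (the first $12$ first-vertex appearances of each saturated vertex, the first $10$ $A$-$B$ edges in Step~II of each blossomed vertex, plus a handful of edges per restricted vertex); I would slightly modify the construction to skip any slot whose underlying unordered pair is a repeat, raising the cutoffs $12$ and $10$ to $14$ and $12$ respectively, and verify via a union bound---each individual slot is involved in a repeat with probability only $O(\log n/n)$---that $\whp$ every relevant vertex retains enough non-repeat slots. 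The cycle-counting and Hall-condition estimates behind Proposition \ref{prop_fiveinout1factor} are thereby perturbed only in lower-order terms. For Phase~2, the $(1/3)n\log n$ unrevealed random $A$-$A$ ordered pairs contain at most $O(\log^2 n)$ repeats, so discarding them still leaves $(1/3-o(1))n\log n$ uniform first-occurrence $A$-$A$ orientations at one's disposal, which is enough to invoke Theorem \ref{thm_friezephase23} and glue the cycles into a directed Hamilton cycle lying entirely inside $D_\tau$. Together with the coupling, this completes the proof.
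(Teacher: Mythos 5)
Your coupling is essentially the paper's: the auxiliary process built by interleaving loops and repeated pairs with the graph-process edges is exactly the construction used in Section \ref{sec:MainThmMod}, and your identification of the obstacle (second-occurrence orientations, which the paper colors blue) is correct. The gap is in your resolution of that obstacle. Modifying $\FIVEINOUT$ to ``skip any slot whose underlying unordered pair is a repeat'' is not a lower-order perturbation: whether the $k$-th first-vertex slot of a saturated vertex $v$ is a repeat depends on the identity of its second vertex, which is precisely the random variable that the deferred-decision analysis keeps unrevealed. Conditioning on ``not a repeat'' makes $\OUT(v)$ uniform over $V'$ minus the previously seen neighbours of $v$, a vertex-dependent set, and this destroys the exact invariance of $\BIP$ under permutations of $\hat{A}^*$ (Lemma \ref{lemma_bipinvariant}). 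That invariance is not an estimate that can absorb error terms; it is the exact symmetry that Lemma \ref{lem_suff_cod} converts into a uniformly random permutation with $O(\log n)$ cycles, and without it Proposition \ref{prop_fiveinout1factor} does not follow. The paper avoids this by \emph{not} modifying $\FIVEINOUT$: it admits up to $\log n$ vertex-disjoint blue edges into the 1-factor (Claims \ref{clm_norepeatedblue}--\ref{clm_blueindep_global}) and removes them at the very end by breaking the Hamilton cycle at each blue edge and re-closing the resulting path via Lemma \ref{lem_removeblue}.

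For Phase 2, you likewise cannot simply ``discard the $O(\log^2 n)$ repeats and invoke Theorem \ref{thm_friezephase23}'': identifying which of the unrevealed $A$-$A$ edges are repeats requires revealing them, and Theorem \ref{thm_friezephase23} as a black box only guarantees a Hamilton cycle in the union of the 1-factor with the random edges --- it says nothing about that cycle avoiding second-occurrence orientations (indeed, the paper's proof of Theorem \ref{thm_friezephase23} is itself the content of Lemmas \ref{lem:originalphase2} and \ref{lem_removeblue}). The paper resolves this with the ``pair exposure'' device: for each candidate pair $e$ it queries the remaining randomness only for $e$ and its flip $r(e)$, which simultaneously determines presence and blueness while keeping all other pairs unrevealed and the outcome typical; the rotation-extension argument is then organized so that every edge it commits to is certified non-blue at the moment it is used. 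Your proposal needs this (or an equivalent mechanism) to become a proof; as written, the two central technical components of Section \ref{sec:MainThmMod} are asserted rather than established.
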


The algorithm $\ALGPRIME$ will mainly follow $\ALG$ but with a
slight modification. Assume that we are given a random graph process
(call it the underlying process). Using this random graph process,
we want to construct an auxiliary process whose distribution is
identical to the random edge process. Let $t=1$ at the beginning and
$a_t$ be the number of distinct edges up to time $t$ in our
auxiliary process (disregarding loops). Thus $a_1 = 0$. At time $t$,
with probability $(2a_t+n)/{n^2}$ we will produce a redundant edge,
and with probability $1- (2a_t+n)/{n^2}$, we will receive an edge
from the underlying random graph process. Once we decided to produce
a redundant edge, with probability $2a_t/(2a_t+n)$ choose uniformly
at random an edge out of the $a_t$ edges that already appeared, and
with probability $n/(2a_t+n)$ choose uniformly at random a loop. Let
$e_t$ be the edge produced at time $t$ (it is either a redundant
edge, or an edge from the underlying process), and choose its first
vertex and second vertex uniformly at random. One can easily check
that the process $(e_1, e_2, \cdots, )$ has the same distribution as
the random edge process.

In the algorithm $\ALGPRIME$, we feed this new auxiliary process
into the algorithm $\ALG$ and orient the edges accordingly. Since
the distribution of the auxiliary process is the same as that of the
random edge process, $\ALG$ will give an orientation which $\whp$
contains a directed Hamilton cycle. However, what we seek for is
a Hamilton cycle with no redundant edge.
Thus in the edge process, whenever we see a redundant edge
that is a repeated edge (not a loop),
color it by blue. In order to show that $\ALGPRIME$ gives a
Hamiltonian graph $\whp$, it suffices to show that we can find a
Hamilton cycle in $D_{m_*}$ which does not contain a blue edge
(note that loops cannot be used in constructing a Hamilton cycle).
We first state two useful facts.

\begin{CLAIM} \label{clm_norepeatedblue}
$Whp$, there are no blue edges incident to $B$ used in
constructing $\FIVEINOUT$.
\end{CLAIM}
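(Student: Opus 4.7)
The plan is to bound, in expectation, the number of blue edges that are both incident to $B$ and used in constructing $\FIVEINOUT$, and then apply Markov's inequality. The bound will be obtained by multiplying a uniform per-edge blue probability by a count of candidate edges from the $\FIVEINOUT$ construction.

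First, I would establish the per-edge blue bound. By construction of the auxiliary process, $\BFP(e_t \text{ is blue}) = \tfrac{2 a_t}{n^2} \leq \tfrac{2(t-1)}{n^2}$, since $a_t$ is bounded by the number of edges seen so far. This yields $O(\log\log n / n)$ for any Step I edge ($t \le 2n\log\log n$) and $O(\log n / n)$ for any edge up to $m_*$.

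Next, I would classify the edges of $\FIVEINOUT$ that are incident to $B$. Inspecting the construction in Section \ref{subsection_small1factor}, these are of the following types: (i) Step I edges from the saturated-vertex construction whose (random) second endpoint lands in $B_1$; (ii) the constantly many Step I edges assigned to each restricted vertex; (iii) Step II $A$-$B$ edges picked for blossomed vertices (at most ten each); and (iv) Step II edges picked for restricted vertices (at most two each). Using $|B_1| \le n (\log\log n)^{12}/\log^2 n$ from Claim \ref{clm_sizes_of_A_B} and $|B_2| \le \log^{13} n$ from Claim \ref{clm_properties_of_buds}, together with the fact that in the random edge process the second vertex of each Step I appearance is uniform over $V$ independent of everything else, the expected number of candidate edges of type (i)--(iv) is $O(|B|)$.

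Combining the per-edge blue probability with the structural count gives: the expected number of blue candidates from Step II is at most $O(|B|) \cdot O(\log n/n) = O((\log\log n)^{12}/\log n) = o(1)$, and from Step I is $O(|B|) \cdot O(\log\log n /n) = o(1)$. Markov's inequality then yields the claim. The one delicate point is to justify that the blue probability bound $\tfrac{2(t-1)}{n^2}$ holds even after conditioning on the structural event ``$e_t$ is used in $\FIVEINOUT$ incident to $B$'', so that the two factors decouple cleanly. I expect this to be the main (mild) obstacle; it follows from the observation that the unordered pair of $e_t$ in the random edge process is independent of all other edges, so the event that this pair coincides with some earlier pair can be bounded uniformly in $(v,w)$ by $\tfrac{2(t-1)}{n^2}$, regardless of what the structural event demands of $e_t$ and the rest of the process.
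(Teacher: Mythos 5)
Your overall strategy --- a first-moment bound over candidate edges followed by Markov's inequality --- is the same as the paper's, and your accounting of the candidate edges and of the orders of magnitude is fine. The gap is exactly at the point you flag, and your proposed resolution of it does not work as stated. Independence of $e_t$ from the other edges gives you $\BFP\bigl(e_t \textrm{ is repeated} \mid \sigma(e_s : s\neq t)\bigr) \le 2(t-1)/n^2$ almost surely, but that is not the inequality you need. To write $\BFP(U_t\cap R_t)\le \BFP(U_t)\cdot 2(t-1)/n^2$, where $U_t$ is the structural event ``$e_t$ is used in $\FIVEINOUT$ and is incident to $B$'' and $R_t$ is ``$e_t$ is blue,'' you must bound $\BFP(R_t\mid U_t)$, and $U_t$ is a joint event about $e_t$ \emph{and} the rest of the process (it contains, e.g., ``the second vertex of $e_t$ lies in $B_1$,'' or ``the first vertex of $e_t$ has appeared as a first vertex at most $11$ times before time $t$''). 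Conditioning on such an event changes the conditional law of the earlier edges, so the per-slot bound $2(t-1)/n^2$ need not survive the conditioning; in general $\BFP\bigl(U_t\cap R_t\mid \sigma(e_s:s\neq t)\bigr)$ can be as large as $\min\bigl\{\BFP(U_t\mid\cdot),\,2(t-1)/n^2\bigr\}$ rather than their product. The correlations here do in fact point the right way ($B$-membership is a ``few appearances'' event), but that has to be verified, not asserted.

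The paper's proof is the rigorous version of your decoupling: it union-bounds over the unordered pair $\{v,w\}$ and over the time slots at which the two copies of that pair would have to occur, and then bounds the probability of a \emph{relaxed} structural event ($v$ appears as a first vertex at most $11$ further times, etc.) conditioned on the edges occupying those slots --- the same device as in the proof of Claim~\ref{clm_properties_of_buds}(ii). For the Step II case it additionally restricts attention to the $O(1)$ slots that could possibly carry the second copy of the pair (the at most $24$ Step I first-vertex appearances of $v$ or $w$ plus the at most $10$ Step II edges defining $\OUT(v)$ and $\IN(v)$), which is what produces the factor $(34/n)^2$ there. If you rewrite your argument in this ``fix the colliding slots first, then bound the structural probability given them'' form, it becomes essentially the paper's proof.
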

\begin{proof}
The expected number of blue edges incident to $B$ in Step I used in
constructing $\FIVEINOUT$ can be computed by choosing two vertices
$v$ and $w$ and then computing the probability that $v \in B$, and
$(v,w)$ or $(w,v)$ together appears twice among Step I edges. The probability
that $v$ appears as a first vertex exactly $i$ times is
${n\log\log n \choose i} \left(\frac{1}{n} \right)^i \left(1-\frac{1}{n}\right)^{n\log\log n - i}$.
Condition on the event that $v$ appeared $i$ times as a first vertex
for some $i < 12$ (and also reveal the $i$ positions in which $v$ appeared).
We then compute the probability that some two Step I edges are $(v,w)$ or $(w,v)$.
There are three events that we need to consider.
First is the event that $(v,w)$ appears twice, whose probability
is ${i \choose 2}\left(\frac{1}{n}\right)^2$.
Second is the event that $(v,w)$ appears once and $(w,v)$ appears once,
whose probability is at most ${n \log \log n \choose 1}\frac{1}{n(n-1)}\cdot {i \choose 1}\frac{1}{n}$.
Third is the event that $(w,v)$ appears twice, whose probability
is at most ${n \log \log n \choose 2} \left(\frac{1}{n(n-1)}\right)^2$.
Combining everything, we see that the expected number of Step I
blue edges incident to $B$ is at most,
\begin{align*}
n^2 \cdot \sum_{i=0}^{11} &{n\log \log n \choose i} \left(\frac{1}{n} \right)^{i} \left(1 - \frac{1}{n}\right)^{n\log \log n - i} \times \\
& \left(
{i \choose 2}\left(\frac{1}{n}\right)^2
+ {n \log \log n \choose 1}\left(\frac{1}{n(n-1)}\right) {i \choose 1} \frac{1}{n}
+  {n \log \log n \choose 2} \left( \frac{1}{n(n-1)}\right)^2
\right). \end{align*}
The main term comes from $i=11$, and the third term in the final bracket. Consequently,
we can bound the expectation by
\begin{align*}
(1+o(1)) \cdot n^2 \cdot {n\log \log n \choose 11} \left(\frac{1}{n} \right)^{11} \left(1 - \frac{1}{n}\right)^{n\log \log n - 11} \cdot {n \log \log n \choose 2} \left(\frac{1}{n(n-1)}\right)^2 = o(1).
\end{align*}
We then would like to compute the expected number of blue edges
incident to $B$ in Step 2 used in constructing $\FIVEINOUT$.
Condition on the first vertices of the Step I edges so that we can
determine the sets $A$ and $B$. By Claim \ref{clm_sizes_of_A_B}, we
may condition on the event $|B| = O( \frac{(\log \log
n)^{12}}{\log^2 n})$. Fix a vertex $v \in B$, and expose all appearances of $v$ in Step II,
and note that only the first 10 appearances are relevant. By Claim
\ref{clm_bbedges}, it suffices to bound the probability of the event
that there exists a vertex $w \in A$ such that $(v,w)$ or $(w,v)$ appears twice
among the at most 24 Step I edges where $v$ or $w$ are the first
vertices, and the at most 10 Step II edges which we know is going to
be used to construct the $\OUT$ and $\IN$ of the vertex $v$. Therefore the
expectation is
\begin{align*}
|B|\cdot n \cdot \left( \frac{34}{n} \right)^2
= O\left( \frac{(\log \log n)^{12}}{\log^2 n}n^2\right) \cdot \left(\frac{34}{n}\right)^2 = o(1).
\end{align*}
\end{proof}

\begin{CLAIM} \label{clm_numblue}
$Whp$, there are at most $\log n$ blue edges used in
constructing $\FIVEINOUT$.
\end{CLAIM}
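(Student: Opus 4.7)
The plan is to combine Claim \ref{clm_norepeatedblue} with a first-moment estimate on the number of repeated edges inside Step I. By Claim \ref{clm_norepeatedblue}, whp no blue edge incident to a vertex of $B$ is used in $\FIVEINOUT$, so it suffices to bound the number of blue edges with both endpoints in $A$ that appear in $\FIVEINOUT$.

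The next step is to observe, by inspecting the construction of $\FIVEINOUT$ in Section \ref{subsection_small1factor} case by case, that every $A$-$A$ edge used in $\FIVEINOUT$ must come from Step I. Saturated vertices only use edges from Step I (the first $2n\log\log n$ edges). Blossomed vertices only use Step II $A$-$B$ edges, and buds use a single Step II $A$-$B$ edge. Each subcase of the partially-blossomed construction contributes either a Step II $A$-$B$ edge or a Step I edge in which the partially-blossomed vertex $v\in B_2$ itself is an endpoint; in either case the edge is not $A$-$A$. Therefore the number of blue $A$-$A$ edges in $\FIVEINOUT$ is at most the total number of blue edges among the first $2n\log\log n$ edges of the process.

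To bound this, I would apply a union bound over pairs of Step I positions. For any $1\le t_1<t_2\le 2n\log\log n$, no matter what $e_{t_1}$ is, at most two of the $n^2$ ordered pairs of vertices produce the same undirected edge as $e_{t_1}$, so $\BFP(e_{t_1}=e_{t_2}\text{ as undirected edges})\le 2/n^2$. Summing over all such pairs yields that the expected number of blue Step I edges is at most $\binom{2n\log\log n}{2}\cdot 2/n^2=O((\log\log n)^2)$, and Markov's inequality then gives at most $\log n$ blue Step I edges whp, completing the argument. I do not expect any significant obstacle here: the heavy lifting was already done in Claim \ref{clm_norepeatedblue}, and for the remaining $A$-$A$ case the first-moment bound is overwhelmingly strong, giving in fact $o(\log n)$ blue edges whp.
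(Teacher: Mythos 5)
Your proposal is correct and follows essentially the same route as the paper: reduce via Claim \ref{clm_norepeatedblue} to blue edges with both endpoints in $A$, note that such edges used in $\FIVEINOUT$ can only be Step I edges, and then bound the expected number of repeated Step I edges by $O((\log\log n)^2)$ and apply Markov's inequality. The only (harmless) difference is presentational: you sum over pairs of time slots while the paper sums over pairs of vertices, and you spell out the case analysis showing that all $A$-$A$ edges of $\FIVEINOUT$ arise from saturated vertices in Step I, a step the paper leaves implicit.
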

\begin{proof}
By Claim \ref{clm_norepeatedblue}, we know that \whp, all the blue
edges used in constructing $\FIVEINOUT$ are incident to $A$.
Therefore it suffices to show that there are at most $\log n$ blue
edges among the Step I edges. The expected number of such edges can
be computed by choosing two vertices $v,w$, and computing the
probability that $(v,w)$ or $(w,v)$ appears twice. Thus is at most
\[ n^2 \cdot {n\log \log n \choose 2} \left(\frac{2}{n^2}\right)^2 = o(\log n). \]
Consequently, by Markov's inequality, we can derive the conclusion.
\end{proof}

\begin{CLAIM} \label{clm_blueindep_global}
$Whp$, each vertex is incident to at most one blue edge.
\end{CLAIM}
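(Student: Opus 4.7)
The plan is to bound the expected number of vertices incident to two or more blue edges and conclude via Markov's inequality. Since the event ``$v$ has $\ge 2$ incident blue edges'' is monotone increasing in time, and by Claim~\ref{clm_stopping_time} we have $m_* \le m_2 = O(n \log n)$ $\whp$, it suffices to prove the bound at the deterministic time $m_2$ (absorbing the $o(1)$ from $\{m_* > m_2\}$).

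I would split the bad event at a fixed vertex $v$ into two cases. Case (i): some single unordered pair $\{v, w\}$ occurs at least three times in the auxiliary edge process by time $m_2$, so the second and third occurrences are both blue (two blue edges incident to $v$, both going to $w$). Case (ii): there exist distinct $w_1 \ne w_2$, both different from $v$, such that each of $\{v, w_1\}$ and $\{v, w_2\}$ occurs at least twice, contributing one blue edge apiece. Every scenario in which $v$ carries $\ge 2$ blue edges falls under one of these.

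Since each step chooses an ordered pair uniformly from $n^2$ possibilities, a fixed unordered pair $\{u, u'\}$ is realized at each step with probability $2/n^2$, and the probability that a specified edge appears at least $k$ times by time $m_2$ is at most $\binom{m_2}{k}(2/n^2)^k = O((\log n / n)^k)$. For case (i), this gives $O((\log n)^3 / n^3)$ per edge; summing over the $n$ choices of $v$ and $n$ choices of $w$ yields expected contribution $O((\log n)^3/n) = o(1)$. For case (ii), the joint probability that two fixed distinct edges each appear at least twice is at most $\binom{m_2}{4}\binom{4}{2}(2/n^2)^4 = O((\log n)^4 / n^4)$ (pick four time slots among $m_2$, partition them into the two pairs of size 2, and require each pair to land on the correct edge). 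Summing over $v$ and the $O(n^2)$ unordered pairs $\{w_1, w_2\}$ gives expected contribution $O((\log n)^4/n) = o(1)$.

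By Markov's inequality both expectations being $o(1)$ force the events to be empty $\whp$, which is exactly the conclusion of the claim. I do not foresee a serious obstacle: the argument is a routine union bound in the same spirit as Claims~\ref{clm_norepeatedblue} and~\ref{clm_numblue}, and the quantitative saving comes simply from demanding two independent repetitions rather than one, which buys two extra factors of $(\log n)/n$ beyond the bounds used in those earlier claims.
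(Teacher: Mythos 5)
Your proposal is correct and takes essentially the same approach as the paper: the paper's entire proof is your case (ii) bound $\binom{n}{3}\binom{m_2}{4}\binom{4}{2}\left(\frac{2}{n^2}\right)^4 = o(1)$ over three distinct vertices. Your case (i), where a single pair $\{v,w\}$ occurs three or more times so that $v$ picks up two parallel blue edges, is an easy extra case that the paper silently omits, so your version is if anything slightly more complete.
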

\begin{proof}
It suffices to show that there does not exist three distinct vertices
$v,w_1,w_2$ such that both $\{v,w_1\}$ and $\{v,w_2\}$ appear
at least twice. The probability of this event is at most
\[ {n \choose 3} {m_2 \choose 4} \cdot {4 \choose 2} \left(\frac{2}{n^2}\right)^4 = o(1). \]
\end{proof}

Now assume that we found a 1-factor as in Section
\ref{subsection_small1factor}. By Claim \ref{clm_numblue}, $\whp$,
it contains at most $\log n$ blue edges. Then after performing the
compression process given in the beginning of Section
\ref{subsection_combinecycles}, by Claim \ref{clm_norepeatedblue},
the number of blue edges remains the same as before. Therefore, if
we can find a Hamilton cycle in the compressed graph which does not
use any of the blue edges, then the original graph will also have a
Hamilton cycle with no blue edges. Thus our goal now is to combine
the cycles into a Hamilton cycle without any blue edges, by using
the non-revealed $A$-$A$ edges.

In order to do this, we provide a proof of a slightly stronger form
of Theorem \ref{thm_friezephase23} for $L = V$. In fact, it can be
seen that when combined with the compression argument, this special case
of the theorem implies the theorem for general $L$. Note that we have at least
$\frac{n\log n}{3}$ non-revealed $A$-$A$ edges remaining after finding the
1-factor described in the previous paragraph. Note that these edges
cannot create more blue edges in the 1-factor we previously found,
since all the $A$-$A$ edges used so far appears earlier in the
process than these non-revealed edges. We will find a Hamilton cycle
in two more phases. The strategy of our proof comes from that of
Frieze \cite{Frieze}. In the first phase, given a 1-factor consisting
of at most $O(\log n)$ cycles, we use the first half of the
remaining non-revealed $A$-$A$ edges to combine some of the cycles
into a cycle of length $n-o(n)$. In this phase, we repeatedly
combine two cycles of the 1-factor until there exists a cycle of
length $n-o(n)$.

\begin{LEMMA} \label{lem:originalphase2}
$Whp$, there exists a 1-factor consisting of $O(\log n)$ cycles, one of which
is of length $n - o(n)$. Moreover, this 1-factor contains at most
$O(\log n)$ blue edges.
\end{LEMMA}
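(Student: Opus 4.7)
The plan is to start with the 1-factor guaranteed by Proposition \ref{prop_fiveinout1factor}, which has at most $2\log n$ cycles, and iteratively merge cycles using the unrevealed $A$-$A$ edges, while ensuring that few blue edges end up in the final 1-factor.

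The merging technique is standard: given two disjoint directed cycles $C$ and $C'$, a directed edge $u\to u^+$ in $C$, and a directed edge $v\to v^+$ in $C'$, the presence of random edges $u\to v^+$ and $v\to u^+$ in the unrevealed $A$-$A$ pool allows us to delete $\{u\to u^+, v\to v^+\}$ and add $\{u\to v^+, v\to u^+\}$, thereby merging $C$ and $C'$ into a single cycle. Since after orientation each unrevealed $A$-$A$ edge is uniformly distributed over the $|A|^2$ ordered pairs in $A\times A$, for a fixed choice of cycle-edges this event has probability $\Theta(r^2/n^4)$, where $r$ is the number of unrevealed edges used. Summing over the $|C|\cdot|C'|$ choices, the expected number of valid mergers between $C$ and $C'$ is $\Theta(|C|\cdot|C'|\cdot r^2/n^4)$.

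I would organize the merging into at most $2\log n$ rounds, each using a fresh batch of $\Omega(n)$ edges out of the first half of the unrevealed $A$-$A$ edges (we have $\frac{1}{6}n\log n$ such edges available in total). In each round let $C^*$ denote the current largest cycle, and try to merge $C^*$ with some other cycle $D$. By the above computation, whenever $|C^*|\cdot|D|\gg n^3/r^2$ a successful merger exists \whp; in particular, once $|C^*|\ge n/2$, any $D$ of size $|D|\ge \log^{10} n$ gets absorbed \whp. A short initial sub-phase merging the largest few cycles pairwise brings $|C^*|$ past $n/2$; thereafter every cycle exceeding the polylogarithmic threshold is absorbed. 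The cycles left behind are at most $2\log n$ in number, each of size below the threshold, so their total size is $o(n)$, and the resulting 1-factor consists of $O(\log n)$ cycles with one of length $n-o(n)$.

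For the blue-edge count, the initial 1-factor contains at most $\log n$ blue edges by Claim \ref{clm_numblue}. Each merger adds two new random edges. A calculation analogous to the proof of Claim \ref{clm_numblue} shows that the expected total number of blue edges produced in the entire edge process is $O(\log^2 n)$, while the number of unrevealed $A$-$A$ edges is $\Omega(n\log n)$; hence a typical such edge is blue with probability $O((\log n)/n)$. Since we add only $O(\log n)$ new edges across all mergers, linearity of expectation bounds the expected number of newly introduced blue edges by $o(1)$, so \whp\ no new blue edge is introduced. The final 1-factor therefore has at most $O(\log n)$ blue edges. The main obstacle I anticipate is the bootstrapping of $|C^*|$: the merger-success probability scales with $|C^*|\cdot|D|$, so the tightest case is the very first few rounds, but by pigeonhole the initial largest cycle already has size $\ge n/(2\log n)$ and a batch of size $\Omega(n)$ easily suffices at that scale, so Chernoff-type estimates get us through this sub-phase without difficulty.
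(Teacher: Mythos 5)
You have the right high-level plan (start from the $O(\log n)$-cycle 1-factor of Proposition \ref{prop_fiveinout1factor}, merge cycles via the swap that replaces $u\to u^+$ and $v\to v^+$ by the chords $u\to v^+$ and $v\to u^+$, and note that each merge adds only two edges so the blue count stays $O(\log n)$), but the quantitative core of your merging step fails. By your own (correct) computation, the expected number of valid swaps between cycles $C$ and $C'$ using a pool of $r$ random $A$-$A$ edges is $\Theta\bigl(|C|\,|C'|\,r^2/n^4\bigr)$, so the condition for a merger to exist $\whp$ is $|C|\,|C'|\gg n^4/r^2$, not $n^3/r^2$. That lost factor of $n$ is fatal: with batches of size $r=\Theta(n)$ the requirement becomes $|C|\,|C'|\gg n^2$, which no two cycles can meet; even spending the entire pool $r=\Theta(n\log n)$ on one merger, absorbing a cycle of size $\log^{10}n$ into a giant cycle has expected swap count $\Theta(\log^{12}n/n)=o(1)$; and your very first bootstrap step (merging two cycles of size $n/(2\log n)$ with a batch of $\Theta(n)$ edges) has expected swap count $\Theta(1/\log^2 n)$, so $\whp$ \emph{no} merger occurs there. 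The root problem is that you pay for \emph{both} chords of the swap out of the random pool.

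The paper's proof pays for only one. It splits the $\frac{1}{6}n\log n$ available edges as $E_0\cup E_1\cup\cdots\cup E_{c\log n}$ with $|E_0|=\frac{1}{2}|E_N|$ and $|E_i|=\Theta(n/\log n)$, and first conditions on the $\whp$ event (Chernoff plus a union bound over vertex subsets) that $E_0$ contains at least $\frac{n\log^{1/2}n}{48}$ edges across every bipartition $(X,V\setminus X)$ with $\frac{n}{\log^{1/2}n}\le |X|\le n-\frac{n}{\log^{1/2}n}$. As long as no cycle has length at least $n-\frac{2n}{\log^{1/2}n}$, the cycles split into two such parts, so $E_0$ supplies $\Omega(n\log^{1/2}n)$ crossing edges $(v,w)$ \emph{deterministically}; each designates a single target chord $(w^-,v^+)$, and one only needs one of these $\Omega(n\log^{1/2}n)$ targets to appear in the fresh batch $E_i$, which fails with probability $e^{-\Omega(\log^{1/2}n)}=o(1/\log n)$, surviving the union bound over at most $c\log n$ rounds. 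The iteration terminates exactly when a cycle of length $n-o(n)$ exists; the remaining small cycles are not absorbed in this lemma at all (that is done later by rotations), which is why the statement only claims one long cycle rather than absorption of everything above a polylogarithmic threshold.
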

\begin{proof}
Condition on the conclusion of Claim \ref{clm_numblue}. Then we are
given a 1-factor consisting of at most $c \log n$ cycles and
containing at most $\log n$ blue edges. Our goal is to modify this
1-factor into a 1-factor satisfying the properties as in the
statement. Consider the non-revealed random $A$-$A$ edges we are
given. Since we will use only the first half of these edges, we have
at least $\frac{n \log n}{6}$ random $A$-$A$ edges given uniformly among
all choices. Let $E_N$ be these edges. Partition $E_N$ as $E_0 \cup
E_1 \cup \cdots \cup E_{c \log n}$, where $E_0$ is the first half of
edges, $E_1$ is the next $\frac{1}{2c\log n}$ proportion of edges,
$E_2$ is the next $\frac{1}{2c\log n}$ proportion of edges, and so
on. Thus $|E_0| = \frac{1}{2}|E_N|$ and $|E_1| = \cdots = |E_{c\log
n}| = \frac{1}{2c\log n}|E_N|$. Since $|E_0| \ge \frac{n \log
n}{12}$, by applying Chernoff's inequality and taking the union
bound, we can see that $\whp$, for every set of vertices $X$ of size
at least $|X| \ge \frac{n}{\log^{1/2}n}$, there exists at least
$\frac{1}{2} |X| |V\setminus X| \frac{\log n}{12n} \ge \frac{n
\log^{1/2} n}{48}$ edges of $E_0$ between $X$ and $V \setminus X$.
Condition on this event.

Assume that the 1-factor currently does not contain a cycle of
length at least $n - \frac{2n}{\log^{1/2}n}$. Then we can partition
the cycles into two sets so that the number of vertices in the
cycles belonging to each part is between $\frac{n}{\log^{1/2}n}$ and
$n - \frac{n}{\log^{1/2}n}$. Thus by the observation above, there
exist at least $\frac{n\log^{1/2}n}{48}$ edges of $E_0$ between the
two parts. Let $(v,w)$ be one such edge. Let $v^+$ be the vertex
that succeeds $v$ in the cycle of the 1-factor that contains $v$,
and let $w^-$ be the vertex that precedes $w$ in the cycle of the
1-factor that contains $w$. If $(w^-,v^+) \in E_1$, then the cycle
containing $v$ and the cycle containing $w$ can be combined into one
cycle (see Figure \ref{fig_rotext}). Therefore, each edge in $E_0$
gives rise to some pair $e$ for which if $e \in E_1$, then some two
cycles of the current 1-factor can be combined into another cycle.
The probability of no such edge being present in $E_1$ is at most
\[ \left( 1 - \frac{1}{n^2} \cdot \frac{n\log^{1/2}n}{48} \right)^{|E_1|}
\le e^{-\Big(\log^{1/2}n/(48n)\Big) \cdot \Big(|E_N|/(2c\log n) \Big)}
\le e^{-\Omega(\log^{1/2}n)}. \]
Therefore with probability $1 - e^{-\Omega(\log^{1/2}n)}$, we can
find an edge in $E_0$ and an edge in $E_1$ which together will reduce
the total number of cycles in the 1-factor by one.

We can repeat the above using $E_i$ instead of $E_1$ in the $i$-th step.
Since the total number of cycles in the initial 1-factor is at most $c\log n$,
the process must terminate before we run out of edges.
Therefore at some step, we must have found a 1-factor that has
at most $O(\log n)$ cycles, and contains a cycle of length $n-o(n)$.
It suffices to check that the estimate on the number of blue edges hold.
Indeed, every time we combine two cycles, we use two additional edges which are not in
the 1-factor, and therefore by the time we are done, we would have
added $O(\log n)$ edges to the initial 1-factor. Therefore
even if all these edges were blue edges, we have $O(\log n)$ blue
edges in the 1-factor in the end.
\end{proof}

\begin{figure}[b]
  \centering
  \begin{tabular}{ccc}
    \begin{tikzpicture}

  \draw[-latex] (2.5cm,2.5cm) arc (45:220:1.4cm);

  \draw[-latex] (0.48cm,0.48cm) -- (0.08cm,0.08cm);
  \draw[fill=black] (0.5cm,0.5cm) circle(0.5mm);
  \draw[fill=black] (0cm,0cm) circle(0.5mm);

  \draw[-latex] (0cm,0cm) arc (45:225:1.4cm);
  \draw[-latex] (-1.95cm,-2cm) arc (225:360:1.4cm) arc (0:30:1.4cm);

  \draw[-latex] (0.18cm,-0.2cm) -- (0.62cm,0.24cm);
  \draw[fill=black] (0.68cm,0.3cm) circle(0.5mm);
  \draw[fill=black] (0.18cm,-0.2cm) circle(0.5mm);

  \draw[-latex] (0.68cm,0.3cm) arc (230:360:1.4cm) arc (0:47:1.4cm);

  \draw (-1cm, 2cm) circle(0.4cm);
  \draw (3cm, -0.5cm) circle(0.3cm);
  \draw (2cm, -1.5cm) circle(0.7cm);

\end{tikzpicture} & \hspace{0.5in} & \begin{tikzpicture}

\def\xendtwo{8 cm}
\def\yshifttwo{2.2 cm}
   \draw [color=white] (0,0) circle(0.1mm);

   \foreach \x in {0.0,0.8,...,8.8} {
            \draw [fill=black] (\x cm, \yshifttwo) circle (0.5mm);
        }

   \foreach \x in {0.0,0.8} {
            \draw [-latex] (\x cm,\yshifttwo) -- (\x cm + 0.7cm,\yshifttwo);
        }
   \draw [-latex, dotted] (1.6 cm,\yshifttwo) -- (2.3cm,\yshifttwo);

   \foreach \x in {2.4,3.2,...,4.0} {
            \draw [-latex] (\x cm,\yshifttwo) -- (\x cm + 0.7cm,\yshifttwo);
        }
   \draw [-latex, dotted] (4.8 cm,\yshifttwo) -- (5.5cm,\yshifttwo);

   \foreach \x in {5.6,6.4,7.2} {
            \draw [-latex] (\x cm,\yshifttwo) -- (\x cm + 0.7cm,\yshifttwo);
        }


    \draw[-latex] (\xendtwo, \yshifttwo) .. controls (6cm,\yshifttwo + 1.1cm)
            and (4.4cm,\yshifttwo + 1.1cm) .. (2.4cm, \yshifttwo + 0.1cm);
    \draw[-latex] (1.6cm, \yshifttwo) ..controls(3cm, \yshifttwo + 0.7cm)
            and (4.2cm,\yshifttwo + 0.7cm) .. (5.6cm, \yshifttwo + 0.1cm);

    \draw (2.4cm, \yshifttwo - 0.4cm) node {$v_{i+1}$};
    \draw (1.6cm, \yshifttwo - 0.4cm) node {$v_{i}$};

    \draw (5.6cm, \yshifttwo - 0.4cm) node {$v_{j}$};
    \draw (4.8cm, \yshifttwo - 0.4cm) node {$v_{j-1}$};

    \draw (0.0cm, \yshifttwo - 0.4cm) node {$v_{0}$};
    \draw (8.0cm, \yshifttwo - 0.4cm) node {$v_{\ell}$};

\end{tikzpicture}
  \end{tabular}
  \caption{Combining two cycles, and rotating a path.}
  \label{fig_rotext}
\end{figure}
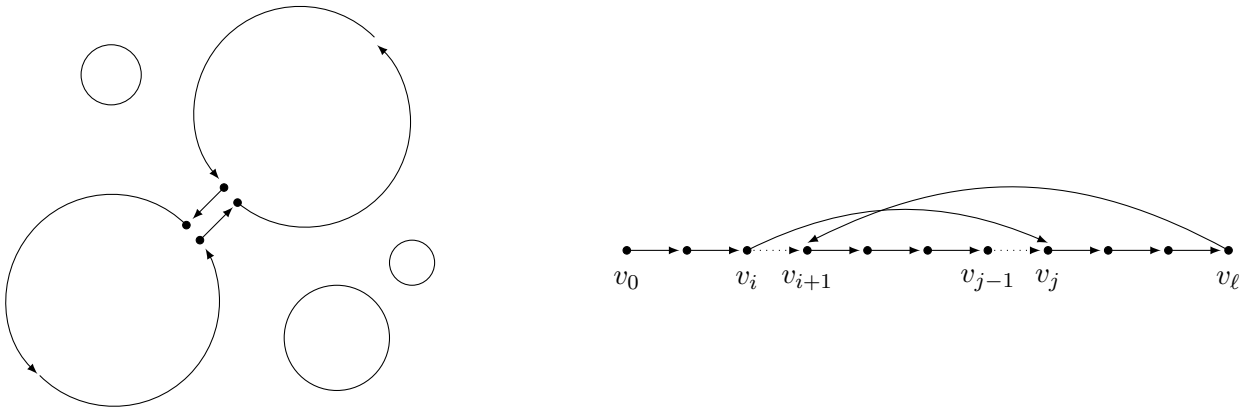

Consider a 1-factor given by the previous lemma. In the second
phase, we use the other half of the remaining new random edges to
prove that the long cycle we just found, can ``absorb'' the
remaining cycles. Let $P=(v_0,\cdots,v_\ell)$ be a path of a
digraph. If there exist two edges $(v_\ell, v_{i+1})$ and $(v_i,
v_j)$ for $1 \le i < \ell$ and $i+1 < j \le \ell$, then we can
\emph{rotate} the path $P$ using $v_{i}$ and $v_{j-1}$ as {\em breaking
points} to obtain a new path $(v_0, v_1, \cdots, v_i, v_j, v_{j+1},
\cdots, v_\ell, v_{i+1}, v_{i+2}, \cdots, v_{j-1})$ (see Figure
\ref{fig_rotext}). We call $v_i$ the {\em intermediate point} of
this rotation. Note that if the graph contains the edge $(v_{j-1},
v_0)$, then one can {\em close} the path into a cycle. Our strategy
is to repeatedly rotate the given path until one can find such an
edge and close the path (see Figure \ref{fig_rotext}).

Further note that the path obtained from $P$ by rotating it once as
above can be described as following. Let $P_1, P_2, P_3$ be subpaths
of $P$ obtained by removing the edges $(v_i, v_{i+1})$ and
$(v_{j-1}, v_j)$. Then there exists a permutation $\pi$ of the set
$[3]$ such that the new path is the path obtained by concatenating
$P_{\pi(1)}, P_{\pi(2)}, P_{\pi(3)}$ (in order). More generally,
assume that we rotate the path $P$ in total $s$ times by using
distinct breaking points $v_{a_1}, v_{a_2}, \cdots, v_{a_{2s}}$. Let
$P_1, \cdots, P_{2s+1}$ be the subpaths of $P$ obtained by removing
the edges $(v_{a_j}, v_{a_j+1})$ for $1 \le j \le 2s$. Then there
exists a permutation $\sigma$ of the set $[2s+1]$ such that the path
we have in the end is the path obtained by concatenating
$P_{\sigma(1)}, P_{\sigma(2)}, \cdots, P_{\sigma(2s+1)}$. We will
use this fact later. Note that it is crucial to have distinct breaking
points here.

After finding a 1-factor described in Lemma \ref{lem:originalphase2},
there are at least $\frac{n\log n}{6}$
non-revealed $A$-$A$ edges that we can use.
Let $E_L$ be the later $\frac{n\log n}{6}$ of these edges, and reveal
all the non-revealed edges not in $E_L$.
Note that there exists a positive constant $C$ such that $\whp$,
the graph induced by the revealed edges before beginning this phase has
maximum degree at most $C\log n$ (it follows from
Chernoff's inequality and union bound). Condition on this event.

We will use the remaining edges $E_L$ in a slightly different way
from how we did in the previous phase since in this phase, it will
be more important to know if some certain edge is present among the
non-revealed edges. For an ordered pair of vertices $e = (x,y)$, let
the {\em flip} of $e$ be $r(e) = (y,x)$ (similarly define a flip of
some set of pairs). Fix some pair $e=(x,y)$, and suppose that we are
interested in knowing whether $e \in E_L$ holds or not, and if $e
\in E_L$, then whether it is a blue edge or not. Thus for each of
the non-revealed edge in $E_L$, ask if it is $e$ or $r(e)$. Since we
know how many times $e$ and $r(e)$ appeared among the already
revealed edges, in the end, we not only know if $e \in E_L$, but
also know if if it is a blue edge or not. We call this procedure as
{\em exposing} the pair $e$, and say that $e$ has been exposed. Note
that the process of exposing the pair $e$ is symmetric in the sense
that even if we are looking only for the edge $e$ we seek for the
existence of $r(e)$ as well. This is because we would like to
determine whether $e$ is blue or not at the same time. We can
similarly define the procedure of exposing a set of pairs, instead
of a single pair. We would like to carefully expose the edges in
order to construct a Hamilton cycle without blue edges.

Note that the expected number of times that $e$ or $r(e)$ appears in
$E_L$ is $\frac{2}{n^2} \cdot \frac{n\log n}{6} = \frac{\log
n}{3n}$. Thus if $S$ is the set of exposed pairs at some point, we
say that the {\em outcome is typical} if the number of times that a
pair belonging to $S$ appears in $E_L$ is at most $\frac{|S|\log
n}{n}$ (which is three times its expected value).
While exposing sets of pairs, we will maintain the outcome
to be typical, since we would like to know that there are enough
non-revealed pairs remaining in $E_L$. For a set $X$ of vertices,
let $Q(X)$ be the set of ordered pairs $(x_1, x_2)$ such that $x_1
\in X$ or $x_2 \in X$.

\begin{LEMMA} \label{lem_removeblue}
Let $X$ and $Y$ be sets of vertices of size at most $\frac{n}{32}$.
Assume that the set of exposed pairs so far is a subset of $Q(X)$ and
the outcome is typical. Further assume that a path $P$ from $v_0$ to
$v_\ell$ of length $\ell = n - o(n)$ is given for some $v_0, v_\ell
\notin X \cup Y$.

Then there exists a set $Z \subset V(P)$ disjoint from $Y$ of size
at most $|Z| \le \frac{n}{\log n \cdot \log \log n}$ such that with
probability at least $1 - o((\log n)^{-1})$, by further exposing
only pairs that intersect $Z$ (thus a subset of $Q(Z)$), one can
find a cycle over the vertices of $P$. Furthermore, the outcome of
exposing these pairs is typical and no new blue edges are added
(thus the set of blue edges in the cycle is a subset of the set of
blue edges in $P$).
\end{LEMMA}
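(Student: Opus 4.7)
\emph{Plan overview.} The plan is to follow the directed P\'osa rotation-extension strategy of Frieze~\cite{Frieze}, but bilaterally: grow two sets $S_0$ and $S_\ell$ of reachable endpoints on the two sides of $P$ via rotations, and then close the path by finding an $E_L$-edge between them. The set $Z$ will be built adaptively to consist of $\{v_0, v_\ell\}$ together with all endpoints discovered along the way, so that every pair we expose (being incident to a current path-endpoint already in $Z$) automatically intersects $Z$.

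\emph{Rotation phase.} Initialize $S_0 = \{v_0\}$, $S_\ell = \{v_\ell\}$, and $Z = S_0 \cup S_\ell$. In each round, for every newly added $u \in Z$, expose all not-yet-exposed pairs incident to $u$ (in both orders). Since the prior exposures lie in $Q(X)$ with typical outcome, for $u \notin X$ all pairs at $u$ are fresh, and each is an edge of $E_L$ independently with probability $\sim 1/n$ per direction; thus $u$ receives $\Theta(\log n)$ new edges of $E_L$ with high probability. Each edge $(u, v_{i+1})$ on $u$'s current path, combined with an already-found $E_L$-edge $(v_i, v_j)$, produces a directed rotation as described at the start of this subsection, yielding the new endpoint $v_{j-1}$ which we append to $S_\ell$ (or $S_0$) and to $Z$. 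A standard directed P\'osa expansion then gives multiplicative growth of $|S_0|$ and $|S_\ell|$; after $O(\log n / \log\log n)$ rounds both sets reach the threshold $s := C\sqrt{n \log\log n / \log n}$ for a sufficiently large constant $C$, and at that point $|Z| \le 2s + O(1) = o\bigl(n/(\log n \log\log n)\bigr)$, comfortably below the allowed budget.

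\emph{Closing phase, blue edges, typicality.} Now expose all pairs between $S_0$ and $S_\ell$ (all intersect $Z$). The expected number of $E_L$-edges between them is $|S_0|\,|S_\ell|\cdot \log n/(3n) \ge (C^2/3)\log\log n$, and a Poisson/Chernoff concentration bound gives that at least one such closing edge exists with probability $1 - \exp\bigl(-\Omega(C^2 \log\log n)\bigr) = 1 - o(1/\log n)$ for $C$ large. To avoid blue edges, note that by (the analog of) Claim~\ref{clm_numblue} the total number of blue pairs in $E_L$ is $O(\log^2 n)$ $\whp$, while at each rotation step we have $\Omega(\log n)$ candidate edges and at closing we have $\Omega(\log\log n)$ candidate edges, so non-blue choices can be made throughout with probability $1 - o(1/\log n)$. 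Finally, the total number of exposed pairs is $O(|Z| n)$ with expected $E_L$-appearance count $O(|Z| \log n)$, which stays within the typicality threshold $|\text{exposed}| \log n/n = O(|Z|\log n)$; Chernoff on the sum of near-independent Bernoullis gives concentration.

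\emph{Main obstacle.} The delicate step is verifying the directed P\'osa expansion in the sparse random edge set $E_L$ under the simultaneous constraints that (a) prior exposures lie in $Q(X)$ with typical outcome, which cuts into the available randomness, (b) new endpoints must avoid $Y \cup Z$, and (c) rotation edges must be non-blue. Since $|X|, |Y|, |Z|$ stay $o(n/\log n)$ throughout the procedure and blue edges are $O(\log^2 n)$ $\whp$, each of these perturbations removes only an $o(1)$-fraction of the neighbors at each expansion step, and so the analysis of \cite{Frieze} adapts with only cosmetic modifications.
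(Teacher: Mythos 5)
Your overall strategy (P\'osa-type double rotations using the non-revealed edges of $E_L$, with $Z$ built adaptively from the vertices whose pairs you expose) is the right family of argument, but the bilateral variant you propose has a genuine gap at the closing step. Growing $S_0$ by rotations at the $v_0$-end only certifies, for each $u\in S_0$, a spanning path of $V(P)$ from $u$ to $v_\ell$; growing $S_\ell$ at the other end only certifies, for each $w\in S_\ell$, a spanning path from $v_0$ to $w$. An edge of $E_L$ from $w$ to $u$ closes a cycle on $V(P)$ only if there is a \emph{single} spanning path from $u$ to $w$, and this is not guaranteed for an arbitrary pair $(u,w)\in S_0\times S_\ell$: the two rotation histories rearrange the path in incompatible ways (and in the directed setting each rotation is a double rotation whose validity depends on the current successor structure, so rotations at the two ends do not simply compose). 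Your computation of the expected number of closing edges as $|S_0|\,|S_\ell|\log n/(3n)$ therefore counts pairs that need not be closable. The standard repair --- for each $u\in S_0$, re-run the $v_\ell$-end rotations on the path from $u$ to $v_\ell$ to build $S_\ell(u)$ --- does not fit the budget of the lemma: it forces $|Z|\gtrsim|S_0|\cdot|S_\ell|$, and if one shrinks the sets so that $|S_0|\,|S_\ell|\le n/(\log n\log\log n)$ then the expected number of closing edges drops to $O(1/\log\log n)$ and no closing edge exists whp. So the bilateral scheme cannot be patched within the stated constraints on $|Z|$ and the failure probability.

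The paper avoids this by rotating only at the $v_\ell$-end, keeping $v_0$ fixed throughout, and growing the endpoint set to near-linear size $\Theta(n/\log\log n)$; it then closes using only the pairs $(v_0,z)$ for $z$ in the final endpoint set. Since these closing pairs are all incident to the single vertex $v_0$, the large final endpoint set never needs to be added to $Z$, which is why $|Z|$ stays below $n/(\log n\cdot\log\log n)$ while the number of candidate closing edges is large enough to give failure probability $e^{-\Omega(\log n/\log\log n)}=o((\log n)^{-1})$. Two further points you should make explicit if you pursue a corrected version: (a) the breaking points of distinct rotations must be kept distinct (the paper excludes the set $N=S_{i+1,0}\cup S_{i+1,0}^-\cup Y_0$ from the candidate intermediate points precisely for this reason; otherwise the rotated path is not a well-defined concatenation of subpaths), and (b) the intermediate points of each double rotation must themselves be added to $Z$, since you must expose pairs incident to them; your accounting $|Z|\le 2s+O(1)$ omits them, though this alone would not break the size bound.
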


Informally, $Y$ is the set of `reserved' vertices which we would
like to keep non-exposed for later usage. The lemma asserts that we
can close the given path into a cycle by further exposing pairs that
intersect some set $Z$ which is disjoint from $Y$ and has relatively
small cardinality.

\begin{proof}
Denote the path as $P = (v_0, v_1, \cdots, v_\ell)$. For a subset of
vertices $A = \{v_{a_1}, v_{a_2}, \cdots, v_{a_t}\}$, define $A^- =
\{v_{a_1 - 1}, v_{a_2 - 1}, \cdots, v_{a_t -1}\}$ and $A^+ =
\{v_{a_1 + 1}, v_{a_2 + 1}, \cdots, v_{a_t +1}\}$ (if the index
reaches either $-1$ or $\ell + 1$, then we remove the corresponding
vertex from the set).

Our strategy can be described as following. We repeatedly rotate the
path to obtain endpoints, and in each iteration select a set of
vertices and expose only pairs incident to these vertices (call
these vertices as the {\em involved vertices}). Thus a pair
consisting of two non-involved vertices will remain non-exposed. The
set $Z$ will be the set of involved vertices, and our goal will be
to construct a cycle while maintaining $Z$ to be small.

To keep track of the set of vertices that have been involved and the
set of endpoints that we obtained, we maintain two sets $T_i$ and
$S_i$ for $i\ge 0$, where $T_0 = \{v_\ell\}$ and $S_0 = X$.
Informally, $T_i$ will be the set of endpoints that have not yet
been involved, and $S_i$ will be the set of involved vertices while
obtaining the set $T_i$. For example, suppose that we performed a
rotation as in Figure \ref{fig_rotext} in the first round. We will
later see that in the process, we expose the neighbors of $v_\ell$
and $v_i$ for this round of rotation to obtain a new endpoint
$v_{j-1}$. Thus we will add the vertices $v_\ell$ and $v_i$ to $S_1$
and $v_{j-1}$ to $T_1$. It is crucial to maintain $T_i$ as a subset
of the set of non-involved vertices, since we will need to expose
its neighbors in the next round of rotation.

Let $Y_0 = Y \cup \{v_0\}$. Throughout the rotation process, $T_i$ and $S_i$ will satisfy
the following properties:

\begin{enumerate}[(i)]
  \setlength{\itemsep}{1pt} \setlength{\parskip}{0pt}
  \setlength{\parsep}{0pt}
\item for every $w \in T_i$, there exists a path of length $\ell$ from $v_0$ to $w$
whose set of blue edges is a subset of that of $P$,
\item the set of exposed pairs after the $i$-th step is a subset of $Q(S_i)$,
\item all the breaking points used in constructing the paths above belong to $S_i \cup T_i$,
\item $|T_i| = \left( \frac{\log n}{500} \right)^{2i}$ and $|S_i \setminus S_{i-1}| \le 2\left( \frac{\log n}{500} \right)^{2i-1} = \frac{1000}{\log n}|T_i|$ (for $i\ge 1$),
\item $X \cup T_{i-1} \cup S_{i-1} \subset S_i$,
\item $S_i$, $T_i$, and $Y_0$ are mutually disjoint, and
\item the outcome at each iteration is typical.
\end{enumerate}

Recall that $T_0 = \{v_\ell\}$ and $S_0 = X$, and note that the
properties above indeed hold for these sets. Since $S_0 = X$,
property (iv) in particular implies that
\[ |S_i| \le |X| + \sum_{a=1}^{i} \frac{1000}{\log
n}|T_a| \le |X| + \frac{2000}{\log n}|T_i|. \]
 Suppose that we completed constructing the sets
$T_{i}$ and $S_{i}$ for some index $i$ so that $|T_i| \le
\frac{n}{(\log n)^2 \log \log n}$. By (iv), we have $|S_i| \le |X| +
\frac{2000n}{\log n}|T_i| \le (\frac{1}{32} + o(1))n$ and $i =
O(\frac{\log n}{\log \log n})$. We will show how to construct the
sets $T_{i+1}$ and $S_{i+1}$ from these sets.

By $|X| \le \frac{n}{32}$, (ii), (iv)
and (vii), we know that at any step of the process the number of
edges in $E_L$ that remain non-revealed is at least
\[ |E_L| - \frac{|Q(S_i)|\cdot \log n}{n} \ge \frac{n\log n}{6} - \frac{2|S_i|n \log n}{n} \ge \frac{n\log n}{12}. \]
Moreover, the number of non-exposed pairs remaining is at least
\[ n^2 - |Q(S_i)| \ge n^2 - 2n|S_i| \ge \frac{n^2}{2}. \]
We will make use of the following three claims whose proof will be given later.

\begin{CLAIM} \label{clm_blueindep}
Assume that some pairs have been exposed and the outcome is typical.
Once we expose the remaining edges, the probability that there exists
a vertex incident to two new blue edges is at most $o((\log
n)^{-2})$.
\end{CLAIM}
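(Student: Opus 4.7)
The plan is to control, via a union bound over vertices $v$, the conditional probability that $v$ is incident to two blue edges among the slots of $E_L$ that were non-revealed before this full exposure. First I would record the conditional distribution of the non-revealed slots: given the typical exposure $S \subseteq Q(S_i)$ so far, each such slot is uniform over $V \times V \setminus (S \cup r(S))$, and distinct non-revealed slots remain mutually independent. Since the running bound $|S_i| \le (1/32 + o(1))n$ forces $|S \cup r(S)| \le n^2/8$, any fixed ordered pair appears in a given non-revealed slot with probability at most $2/n^2$, and there are at most $|E_L| = n \log n / 6$ such slots to consider.

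For a fixed $v$ I would then decompose the event ``$\{v, w\}$ is represented by a new blue edge'' into two cases: (a) $\{v, w\}$ already appears in the revealed graph (either among pre-$E_L$ edges or among the already-exposed portion of $E_L$) and additionally appears in some non-revealed slot of $E_L$; or (b) $\{v, w\}$ appears at least twice among the non-revealed slots of $E_L$. Using the conditioned max-degree bound $C \log n$ on the revealed graph, case (a) contributes at most $O(\log n)$ candidates $w$ each with probability $O(\log n / n)$ of landing in a non-revealed slot; case (b) gives $n$ candidates $w$ each with probability $O(\log^2 n / n^2)$ of occurring twice. Thus the expected number of $w$'s producing a new blue edge incident to $v$ is $O(\log^2 n / n)$.

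Since new blue edges through distinct neighbors $w_1 \ne w_2$ are witnessed by disjoint slot pairs and the slot values are conditionally independent, a straightforward second-moment argument bounds the probability that $v$ acquires two new blue edges through distinct neighbors by $O(\log^4 n / n^2)$; the contribution from a single pair $\{v,w\}$ yielding two new blue edges (which forces at least three occurrences of $\{v,w\}$) is of even smaller order. Summing over the $n$ vertices gives total probability $O(\log^4 n / n) = o(\log^{-2} n)$, as required. The only delicate point is quantitative: obtaining $o(\log^{-2} n)$ rather than merely $o(1)$ relies on both the $O(\log n)$ max-degree bound on the currently revealed graph and the typicality of the outcome on $E_L$, which keeps the number of already-exposed slots at most $|S| \log n / n = O(\log n)$ and hence guarantees that the non-revealed slots still dominate $E_L$.
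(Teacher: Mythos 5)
Your proposal is correct and follows essentially the same route as the paper: the same case split according to whether each of the two blue edges arises from a pair already present in the revealed graph (controlled by the conditioned max-degree bound $C\log n$) or from a pair repeated among the non-revealed slots of $E_L$ (controlled by the per-slot probability $2/n^2$), yielding the same final bound $O(\log^4 n/n)=o((\log n)^{-2})$. The only difference is presentational — you organize the count as a per-vertex first/second-moment estimate, whereas the paper union-bounds directly over triples $(v,w_1,w_2)$ — and these amount to the same computation.
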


\begin{CLAIM} \label{clm_manyedges}
Assume that some pairs have been exposed and the outcome is typical.
Let $R$ be a set of pairs of size $|R| = \Omega(\frac{n}{\log n})$
disjoint to the exposed pairs. Then with probability at least $1 -
o((\log n)^{-2})$, the number of times a pair in $R$ appear among
the non-revealed edges of $E_L$ is at least $\frac{|R|\log n}{24n}$,
and is at most $\frac{|R| \log n}{2n}$.
\end{CLAIM}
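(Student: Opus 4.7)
The plan is to reduce $N_R$, the number of $R$-pair occurrences among the non-revealed edges of $E_L$, to a binomial random variable, and then apply Chernoff's inequality. Since $R$ is disjoint from the exposed set $S$, any edge of $E_L$ landing in $R$ is automatically non-revealed, so $N_R$ simply counts the occurrences of $R$-pairs among the $|E_L| = n\log n/6$ i.i.d.\ uniform edges on the $n^2$ ordered pairs. Conditioning on the exposure information---the locations and values of those $N_S$ edges of $E_L$ that lie in $S$---the remaining $|E_L| - N_S$ edges are i.i.d.\ uniform on the $n^2 - |S|$ pairs outside $S$, and therefore $N_R$ is conditionally distributed as $\mathrm{Bin}\bigl(|E_L| - N_S,\, |R|/(n^2 - |S|)\bigr)$.

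Next, I invoke the typical-outcome hypothesis $N_S \leq |S|\log n/n$ together with the bound $|S| = o(n^2)$ which is valid throughout Lemma \ref{lem_removeblue} (since $|S| \leq |Q(S_i)| \leq 2n\cdot |S_i|$ with $|S_i| \ll n$). These yield $|E_L| - N_S = (1 - o(1))|E_L|$ and $|R|/(n^2 - |S|) = (1 + o(1))|R|/n^2$, so the conditional mean of $N_R$ is $\mu = (1 + o(1))\frac{|R|\log n}{6n}$. Chernoff's inequality then gives
\[
\BFP\bigl[\,|N_R - \mu| \geq \tfrac{1}{3}\mu\,\bigr] \leq 2\,e^{-\Omega(\mu)},
\]
and, with $|R| = \Omega(n/\log n)$ having a sufficiently large implicit constant (as in all applications within Lemma \ref{lem_removeblue}), $\mu$ grows faster than any fixed multiple of $\log \log n$, so the right-hand side is $o((\log n)^{-2})$. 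On the complementary event, the deviation $|N_R - \mu| < \mu/3$ combined with the $1 \pm o(1)$ slack built into $\mu$ places $N_R$ strictly inside $[\tfrac{|R|\log n}{24n},\, \tfrac{|R|\log n}{2n}]$, which is exactly the conclusion.

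The only subtlety is disentangling the conditioning on the exposed pairs: a priori one might worry that knowing which edges of $E_L$ lie in $S$ could distort the distribution of $N_R$. This is handled precisely by the binomial reduction above, and the typicality bound $N_S \leq |S|\log n/n$ together with $|S| = o(n^2)$ ensures the distortion is only a $1 \pm o(1)$ factor on both the number of trials and the per-trial success probability, hence harmless for the Chernoff argument.
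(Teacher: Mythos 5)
Your approach is the same as the paper's: condition on the exposure information, bound the conditional mean of the count from above and below, and finish with Chernoff. Two quantitative steps, however, do not hold as you state them. First, $|S| = o(n^2)$ is not justified: in Lemma \ref{lem_removeblue} the exposed set is $Q(S_i)$ with $|S_i| \le |X| + \frac{2000}{\log n}|T_i|$ and $|X|$ is only assumed to be at most $\frac{n}{32}$, so $|S_i|$ (hence $|Q(S_i)|/2n$) can be a constant fraction of $n$ and $N_S$ a constant fraction of $|E_L|$. Your conditional mean is therefore not $(1+o(1))\frac{|R|\log n}{6n}$ but only pinned down up to constant factors; the paper handles this by working with the cruder bounds $|E_L| - N_S \ge \frac{n\log n}{12}$ and per-trial probability at most $\frac{2}{n^2}$, placing the mean in $[\frac{|R|\log n}{12n}, \frac{|R|\log n}{3n}]$, and the extra factor-of-two slack in the claimed interval $[\frac{|R|\log n}{24n}, \frac{|R|\log n}{2n}]$ is exactly what absorbs this. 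Your argument survives if you widen the deviation tolerance accordingly, but the $\mu \pm \mu/3$ bookkeeping as written rests on a false premise.

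Second, and more seriously, the final step fails as justified: if $|R| = \Omega(\frac{n}{\log n})$ with any fixed implicit constant $c$, then $\mu = \Theta(c)$ is a \emph{constant} --- no choice of $c$ makes $\mu$ "grow faster than any fixed multiple of $\log\log n$," and $e^{-\Omega(\mu)}$ is then a constant, not $o((\log n)^{-2})$. What is actually needed is $|R| = \omega(\frac{n\log\log n}{\log n})$; the paper's own proof quietly replaces the hypothesis by $|R| = \Omega(\frac{n}{\log\log n})$, which is what holds in every application (e.g.\ $|Q_1| \ge \frac{n}{4}|T_i| \ge \frac{n}{4}$ and $|Q_3| = \Omega(\frac{n}{\log\log n})$) and gives $\mu = \Omega(\frac{\log n}{\log\log n}) \gg \log\log n$, hence $e^{-\Omega(\mu)} = o((\log n)^{-2})$. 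So the weakness here is inherited from the claim's stated hypothesis, but your proposed repair (a large implicit constant) does not close it; the correct repair is to strengthen the lower bound on $|R|$ to the one the applications actually provide.
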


\begin{CLAIM} \label{clm_nottoomanyedges}
Assume that some pairs have been exposed and the outcome is typical.
Then with probability at least $1 - o((\log n)^{-2})$, for every
disjoint sets $A_1, A_2$ of vertices satisfying $|A_1| \le
\frac{n}{\log n \cdot (\log \log n)^{1/2}}$ and $|A_2| =
\frac{|A_1|\log n}{500}$, the number of edges between $A_1$ and
$A_2$ among the non-revealed edges of $E_L$ is at most
$\frac{|A_1|\log n}{100}$.
\end{CLAIM}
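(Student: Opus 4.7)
My plan is to apply Chernoff's inequality to each fixed pair $(A_1,A_2)$ and then take a union bound over all such pairs, summing over the possible values of $k = |A_1|$. First I would unpack what the typicality hypothesis gives us: conditional on the exposure outcome, the non-revealed edges of $E_L$ are i.i.d.\ uniform on $[n]^{2} \setminus S^{\pm}$ (where $S^{\pm}$ collects the exposed pairs together with their flips), and $|S^{\pm}| = o(n^{2})$ in the regime of interest. So for a fixed disjoint pair with $|A_1|=k$ and $|A_2|=k\log n/500$, each non-revealed $E_L$-edge lies between $A_1$ and $A_2$ with probability at most $3|A_1||A_2|/n^{2}$, and the total count $X$ is stochastically dominated by $\mathrm{Bin}\bigl(|E_L|,\,3|A_1||A_2|/n^{2}\bigr)$, of mean $\mu \le k^{2}\log^{2} n/(1000\,n)$.

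With target $t = k\log n/100$, the hypothesis $k\log n \le n/(\log\log n)^{1/2}$ gives $t/\mu \ge 10(\log\log n)^{1/2}$, so Chernoff will yield
\[
\BFP[X \ge t] \le (e\mu/t)^{t} \le \Bigl(\tfrac{e\,k\log n}{10\,n}\Bigr)^{k\log n/100} = \exp\!\Bigl(-\tfrac{k\log n}{100}\,\log v + O(k\log n)\Bigr),
\]
writing $v := n/(k\log n) \ge (\log\log n)^{1/2}$. In parallel, the number of candidate pairs for a fixed $k$ is at most $\binom{n}{k}\binom{n-k}{k\log n/500}$, which using $\binom{n}{m}\le (en/m)^{m}$ with $n/m = 500v$ for the dominant second factor is bounded by $\exp\bigl((k\log n/500)\log v + O(k\log n)\bigr)$. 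Multiplying these two estimates gives $\exp\bigl(-(k\log n)(\log v)/125 + O(k\log n)\bigr)$, and since $\log v \ge \tfrac12 \log\log\log n$, this is at most $\exp(-(k\log n)\log\log\log n/250)$. Summing over $k\ge 1$ is then dominated by $k=1$ and yields $(\log n)^{-\omega(1)} = o((\log n)^{-2})$, as required.

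The one step that needs care is matching the two estimates: the argument is genuinely tight. Coarser bounds such as $\binom{n}{m}\le n^{m}$ together with a mean-only Chernoff estimate would leave a positive counting contribution of order $k\log^{2} n/500$ against a tail decay of only $k\log n \cdot \log\log\log n/200$, and the union bound would not close. The crucial observation is that both the counting factor and the Chernoff exponent can be written in terms of the same quantity $\log v$, with coefficients $+1/500$ and $-1/100$ respectively, and the slack $v \ge (\log\log n)^{1/2}$ built into the hypothesis on $|A_1|$ is precisely what makes the negative contribution dominate.
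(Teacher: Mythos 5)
Your proposal is correct and follows essentially the same route as the paper: a union bound over the pairs $(A_1,A_2)$ for each size $k$, with the counting factor bounded via $\binom{n}{m}\le (en/m)^m$ and the tail bounded by $\binom{N}{t}p^t\le (e\mu/t)^t$ (which is exactly the paper's ``choose the $t$ offending pairs and multiply their probabilities'' computation), the whole thing closing precisely because of the $1/100$ versus $1/500$ gap in the exponents. The only blemishes are harmless constants: the per-edge probability should be $4|A_1||A_2|/n^2$ rather than $3|A_1||A_2|/n^2$ (there are $2|A_1||A_2|$ ordered pairs and at least $n^2/2$ non-exposed pairs), and absorbing the $O(k\log n)$ error into the final exponent costs another factor of $2$ in the constant $250$, neither of which affects the conclusion.
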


For each vertex $w \in T_i$, there exists a path $P_w$ of length
$\ell$ from $v_0$ to $w$ satisfying (i). Let $P_{w,1}$ be the first
half and $P_{w,2}$ be the second half of $P_w$. Let $S_{i+1,0} = S_i
\cup T_i$, and $N = S_{i+1,0} \cup S_{i+1,0}^- \cup Y_0$ and
\[ Q_1 = \{(w, x^+): w \in T_i, x \in V(P_{w,1}) \setminus N \}. \]
We have $Q_1 \subset Q(T_i)$ and
\[ |Q_1| \ge |T_i| \cdot \left(\frac{\ell}{2} - 2|S_i| - 2|T_i| - |Y|-1  \right) \ge \frac{n}{4}|T_i|. \]
By (vi) and the definition of $N$, the pairs in $Q_1$ have both of
their endpoints not in $S_i$, thus have not been exposed yet. Now
expose the set $Q_1$. By Claim \ref{clm_manyedges}, we know that
with probability at least $1 - o((\log n)^{-2})$, the outcome is
typical, and the number of pairs in $Q_1$ that appear in $E_L$ is at
least
\[ \frac{|Q_1|\log n}{24n} \ge \frac{|T_i|\log n}{96}. \] Condition on this event. Note that if
some pair $(w,x^+) \in Q_1$ appears in $E_L$ and is not a blue edge,
then $x$ can serve as an intermediate point in our next round of
rotation. Since we forced not to use the same breaking point twice
by avoiding the set $N$ (see properties (iii) and (v)), if there is
a non-blue edge of the form $(x, y^+)$ for some $y \in P_{w,2}$,
then we can find a path of length $\ell$ from $v_0$ to $y$
satisfying (i) (see Figure \ref{fig_rotext}).

Let
\[ S_{i+1, 1} = \{x :  (w,x^+) \in Q_1 \cap E_L, (w,x^+) \, \textrm{is
not blue} \}. \] By Claim \ref{clm_blueindep}, with probability at
least $1-o((\log n)^{-2})$, among the edges in $Q_1 \cap E_L$, the
number of blue edges is at most $|T_i|$. Condition on this event.
Then the number of non-blue edges between $T_i$ and $S_{i+1,1}^+$ is
at least $\frac{|T_i|(\log n - 1)}{96} >\frac{|T_i|\log n}{100}$. By
Claim \ref{clm_nottoomanyedges}, with probability at least
$1-o((\log n)^{-2})$, we see that $|S_{i+1,1}| \ge \frac{|T_i|\log
n}{500}$. Redefine $S_{i+1,1}$ as an arbitrary subset of it of size
exactly $\frac{|T_i|\log n}{500}$. Note that $S_{i+1,1} \cap N =
\emptyset$. The vertices in $S_{i+1,1}$ will serve as intermediate
points of our rotation.

Now let \[ Q_2 = \{(x,y^+) : x \in S_{i+1,1} ,\,\, (w,x^+)\in Q_1
\cap E_L, \,\, \textrm{and} \,\, y \in V(P_{w,2}) \setminus (N\cup
S_{i+1,1}^-) \}, \] and note that $Q_2 \subset Q(S_{i+1,1})$.
Further note that we are subtracting $S_{i+1,1}^-$ from $V(P_{w,2})$
in the above definition. This is to avoid having both a pair and its
reverse in the set $Q_2$. Even though the set $S_{i+1,1}$ was
defined as a collection of vertices belonging to $P_{w',1}$ for
various choices of $w'$, it can still intersect $P_{w,2}$ for some
vertex $w$, since we are considering different paths for different
vertices. Similarly as before, all the pairs in $Q_2$ are not
exposed yet and we have $|Q_2| \ge \frac{n}{4}|S_{i+1,1}|$.
Moreover, with probability at least $1-o((\log n)^{-2})$, the number
of pairs in $Q_2$ that appear in $E_L$ which are not blue edges is
at least $\frac{|S_{i+1,1}|\log n}{100}$ and the outcome is typical.
Let $T_{i+1,0} = \{y : (x,y^+) \in Q_2 \cap E_L, (x,y^+) \,
\textrm{is not blue} \}$. As in above, with probability at least
$1-o((\log n)^{-2})$, we have $|T_{i+1,0}| \ge \frac{|S_{i+1,1}|\log
n}{500}$. Moreover, by the observation above, for all the vertices
$y \in T_{i+1,0}$, there exists a path of length $\ell$ from $v_0$
to $y$ satisfying (i).

Let $T_{i+1} = T_{i+1,0}$ and $S_{i+1} = S_{i+1,0} \cup S_{i+1,1}$.
Since $|T_{i+1}| \ge \Big(\frac{\log n}{500}\Big)^2|T_i| \ge
\Big(\frac{\log n}{500}\Big)^{2(i+1)}$, we may redefine $T_{i+1}$ as
an arbitrary subset of it of size exactly $\Big(\frac{\log
n}{500}\Big)^{2(i+1)}$. In the previous paragraph we saw that (i)
holds for $T_{i+1}$. Property (ii) holds since the set of newly
exposed pairs is $Q_1 \cup Q_2 \subset Q(S_{i+1,0} \cup S_{i+1,1}) =
Q(S_{i+1})$. Properties (iii), (v), and (vi) can easily be checked
to hold. By Claim \ref{clm_manyedges}, the outcome is typical, and
we have (vii). For property (iv), the size of $T_{i+1}$ by
definition satisfies the bound, and the size of $S_{i+1} \setminus
S_i$ is
\begin{align*}
|S_{i+1} \setminus S_i| &\le |S_{i+1,0} \setminus S_i| + |S_{i+1,1}| \\
&\le |T_i| +  \frac{|T_i|\log n}{500}
\le \left(\frac{\log n}{500}\right)^{2i} \cdot \left(1 + \frac{\log n}{500}\right) \le 2\left(\frac{\log n}{500}\right)^{2i+1}.
\end{align*}

Repeat the above until we reach a set $T_t$ of size $\frac{n}{(\log
n)^2 \cdot \log \log n} \le |T_t| \le \frac{n}{(500)^2 \log \log
n}$. By (iv), we have $t = O(\frac{\log n}{\log \log n})$ and $|S_t|
\le |X| + \frac{n}{125\log n \cdot \log \log n}$. Redefine $T_t$ as
an arbitrary subset of size exactly $\frac{n}{(\log n)^2 \cdot \log
\log n}$. Note that the size of $S_t$ does not necessarily decrease,
and thus we still have $|S_t| \le |X| + \frac{n}{125\log n \cdot
\log \log n}$. We will repeat the process above for the final time
with the sets $S_t$ and $T_t$. This will give $|T_{t+1}| =
\frac{n}{(500)^2\log \log n}$ and $|S_{t+1} \setminus S_{t}| \le
\frac{n}{125 \log n \cdot \log \log n}$, from which it follows that
$|S_{t+1}| \le \frac{2n}{125\log n \log\log n}$. Let $Q_3 = \{(v_0,
z) : z \in T_{t+1}\}$ and expose $Q_3$ (note that the pairs in $Q_3$
has not yet been exposed since $(T_{t+1} \cup \{v_0\}) \cap S_{t+1}
= \emptyset$, while the set of exposed pairs is $Q(S_{t+1})$). Since
$|Q_3| = |T_{t+1}| = \Omega(\frac{n}{\log \log n})$, by Claims
\ref{clm_blueindep} and \ref{clm_manyedges}, with probability at
least $1 - o((\log n)^{-2})$, we have a pair in $Q_3$ that appears
in $E_L$ as a non-blue edge. This gives a cycle over the vertices of
$P$ whose set of blue edges is a subset of that of $P$.

For the set $Z = (S_{t+1} \cup \{v_0\})
\setminus X$, we see that the set of exposed pairs is a subset of $Q(X
\cup Z)$. Furthermore, since $Y_0$ and $S_{t+1}$ are disjoint and $v_0
\notin Y$, the sets $Y$ and $Z$ are disjoint as well. By $t =
O(\frac{\log n}{\log \log n})$, the total number of events involved
is $O(\frac{\log n}{\log \log n})$. Since each event hold with
probability at least $1-o((\log n)^{-2})$, by taking the union
bound, we obtain our set and cycle as claimed with probability at
least $1-o((\log n)^{-1})$.
\end{proof}

The proofs of Claims \ref{clm_blueindep}, \ref{clm_manyedges}, and
\ref{clm_nottoomanyedges} follow.

\begin{proof}[Proof of Claim \ref{clm_blueindep}]
Let $G'$ be the graph induced by the edges that have been
revealed before the final phase (thus all the edges but $E_L$).
It suffices to compute the probability of the following
events: (i) there exist $v,w_1,w_2 \in V$ such that both
$\{v,w_1\}$ and $\{v,w_2\}$ appears at least twice among the
remaining edges, (ii) there exist $v,w_1,w_2 \in V$ such
that $\{v,w_1\}$ and $\{v,w_2\}$ were already in $G'$, and both
appears at least once among the remaining edges, and (iii) there
exist $v, w_1, w_2 \in V$ such that $\{v,w_1\}$ were
already in $G'$, appears at least once among the remaining
edges, and $\{v,w_2\}$ appears at least twice among the remaining
edges.

The probability of the first event happening is at most
\[ n^3 \cdot {n \log n /6 \choose 4} \cdot {4 \choose 2} \left(\frac{2}{n^2}\right)^4 = O\left(\frac{(\log n)^4}{n}\right). \]
Recall that we conditioned on the event that
each vertex has degree at most $C \log n$
in the graph induced by the edges revealed before this phase.
Consequently, the probability of the second event happening is at most
\[ n \cdot {C \log n \choose 2} \cdot {n \log n/6 \choose 2} {2 \choose 1} \cdot \left(\frac{2}{n^2}\right)^2 = O\left(\frac{(\log n)^4}{n}\right), \]
and similarly, the probability of the third event happening is at most
\[ n^2 \cdot {C\log n \choose 1} \cdot {n \log n/6 \choose 3} {3 \choose 1} \cdot \left(\frac{2}{n^2}\right)^3 = O\left(\frac{(\log n)^4}{n}\right). \]
Therefore we have our conclusion.
\end{proof}

\begin{proof}[Proof of Claim \ref{clm_manyedges}]
Recall that at any time of the process, the number of non-revealed edges in $E_L$
is at least $\frac{n\log n}{12}$. The probability of a single non-revealed
edge of $E_L$ being in $R$ is at least $\frac{|R|}{n^2}$. Therefore
the expected number of times a pair in $R$ appear among
the non-revealed edges is at least,
\[ \frac{|R|}{n^2} \cdot \frac{n \log n}{12} = \frac{|R|\log n}{12n}. \]
On the other hand, recall that at any time of the process, the
probability that a non-revealed edge of $E_L$ is some fixed pair at
most $\frac{2}{n^2}$, since the number of non-exposed pairs is at
least $\frac{n^2}{2}$. Therefore the expected number of times a pair
in $R$ appear among the non-revealed edges is at most,
\[ \frac{2|R|}{n^2} \cdot \frac{n \log n}{6} = \frac{|R|\log n}{3n}. \]
Since $|R| = \Omega(\frac{n}{\log \log n})$, the conclusion follows
from Chernoff's inequality and union bound.
\end{proof}

\begin{proof}[Proof of Claim \ref{clm_nottoomanyedges}]
Recall that at any time of the process, the probability that a
non-revealed edge of $E_L$ is $(v,w)$ or $(w,v)$ is at most
$\frac{4}{n^2}$, since the number of non-exposed pairs is at least
$\frac{n^2}{2}$.

Let $k$ be a fixed integer satisfying $k \le \frac{n}{\log n \cdot
\log \log n}$. Let $A_1$ be a set of vertices of size $k$ and $A_2$
be a set of vertices of size $\frac{k\log n}{500}$ disjoint from
$A_1$. The number of choices for such sets is at most
\[ n^k {n \choose k\log n/500} \le
\left( n^{\frac{500}{\log n}} \cdot \frac{500 en}{k \log n} \right)^{k\log n/500}
\le \left( \frac{e^{1000}n}{k \log n} \right)^{k\log n/500}
. \]
The probability of there being more than $\frac{k\log n}{100}$ edges
between $A_1$ and $A_2$ can be computing by first choosing
$\frac{k\log n}{100}$ pairs between $A_1$ and $A_2$, and then
computing the probability that they all appear among the remaining
edges. Thus is at most
\begin{align*}
&{k^2\log n/500 \choose k\log n/100} \cdot \left(\frac{n\log n}{3} \right)^{k \log n/100} \left(\frac{4}{n^2}\right)^{k\log n /100} \\
\le & \left( \frac{ek}{5} \cdot \frac{n\log n}{3} \cdot \frac{4}{n^2} \right)^{k\log n /100} \\
\le & \left( \frac{ 4ek\log n}{15n} \right)^{k\log n /100 }
\le \left( \frac{ k\log n}{n} \right)^{k\log n /100 }.
\end{align*}
Thus by taking the union bound, we see that the probability
of there being such sets $A_1$ and $A_2$ is at most
\[ \sum_{k=1}^{n/(\log n \cdot \log \log n)} \left(\frac{e^{1000} n}{k \log n}\right)^{k\log n / 500} \cdot \left( \frac{ k\log n}{n} \right)^{k\log n /100}
\le \sum_{k=1}^{n/(\log n \cdot \log \log n)} \left(\frac{e^{1000} k^4 \log^4 n}{n^4}\right)^{k\log n / 500}.
\]
Since the summand is maximized at $k=1$ in the range $1 \le k \le
\frac{n}{\log n \cdot \log \log n}$, we see that the right hand side
of above is $o((\log n)^{-2})$.
\end{proof}

We now can find a Hamilton cycle without any blue edges,
and conclude the proof that $\ALGPRIME$ succeeds $\whp$.

\begin{THM} \label{thm_rotationextension}
There exists a Hamilton cycle with no blue edges $\whp$.
\end{THM}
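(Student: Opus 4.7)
The approach combines absorption of the small cycles with iterative removal of blue edges, both built on the machinery of Lemma~\ref{lem_removeblue}. Apply Lemma~\ref{lem:originalphase2} to obtain a 1-factor with $O(\log n)$ cycles and at most $O(\log n)$ blue edges, one cycle being of length $n - o(n)$. Break that long cycle at a non-blue edge (possible since blue edges are sparse) to obtain an initial long path $P_0$ of length $n - o(n)$ containing at most $O(\log n)$ blue edges.

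The first main step is to absorb the $O(\log n)$ remaining small cycles, which together cover $o(n)$ vertices, into the current long path. For each small cycle $C_j$ I would adapt the rotation scheme of Lemma~\ref{lem_removeblue}: iteratively rotate the path to amplify a set $T$ of admissible endpoints geometrically until $|T| = \Omega(n/\log\log n)$, then search among pairs $(w,c)$ with $w \in T$ and $c \in V(C_j)$ for a non-blue edge appearing in $E_L$. Such an edge lets us extend the path by attaching $C_j$ through $(w,c)$ and then traversing $C_j$ minus one of its edges. The analysis is essentially identical to Lemma~\ref{lem_removeblue}'s, except that the termination condition is ``find an extension edge into $C_j$'' rather than ``close to $v_0$''; the expected number of qualifying pairs is $|T| \cdot |V(C_j)| \cdot \Theta(\log n / n) = \omega(1)$, giving success with the same $1 - o((\log n)^{-1})$ probability. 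Iterating over all small cycles produces a Hamilton path $P$ whose blue edges are a subset of those of $P_0$.

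Next, apply Lemma~\ref{lem_removeblue} to close $P$ into a Hamilton cycle $H_0$, which by the lemma inherits only a subset of the blue edges of $P$. While the current Hamilton cycle $H$ still contains a blue edge $(b, b^+)$, break $H$ at $(b, b^+)$ to produce a Hamilton path with one fewer blue edge, and re-apply Lemma~\ref{lem_removeblue} to close it back into a Hamilton cycle. To preserve the preconditions of the lemma across iterations, at each invocation include the endpoints of all not-yet-removed blue edges in the reserved set $Y$; since the set $Z$ returned by the lemma is disjoint from $Y$, the accumulated exposed set $X$ never contains an endpoint of a blue edge we still plan to break at, so the condition ``$v_0, v_\ell \notin X \cup Y$'' is always satisfiable. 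Each iteration strictly reduces the number of blue edges, so after $O(\log n)$ rounds the cycle is entirely blue-free.

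Feasibility of the budget is easy to verify: the accumulated set $X$ grows by at most $n/(\log n \log\log n)$ per invocation of Lemma~\ref{lem_removeblue}, so over all $O(\log n)$ absorptions, closings, and blue-edge removals it satisfies $|X| = O(n/\log\log n) \ll n/32$; the reserved set $Y$ stays of size $O(\log n)$; and a union bound over $O(\log n)$ independent success events, each holding with probability $1 - o((\log n)^{-1})$, gives overall success $\whp$. The principal obstacle I foresee is the absorption step: one must adapt Lemma~\ref{lem_removeblue}'s rotation machinery from ``close the path'' to ``extend the path,'' being careful to track breaking points, exposed pairs, and the invariant that no newly added edge is blue. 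Once that adaptation is in place, the closing and subsequent blue-edge removals follow essentially by direct application of Lemma~\ref{lem_removeblue}.
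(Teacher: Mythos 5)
Your overall strategy coincides with the paper's: get the $1$-factor of Lemma~\ref{lem:originalphase2}, absorb the $O(\log n)$ short cycles into the long one, then repeatedly break the Hamilton cycle at a blue edge and re-close it with Lemma~\ref{lem_removeblue}, reserving the endpoints of the surviving blue edges in $Y$ so that $X$ never swallows them. The closing and blue-edge-removal phases, and your bookkeeping of $X$, $Y$, and the union bound, match the paper's proof essentially verbatim. The one place you diverge is the absorption step, and there you take the harder road: you propose to rotate the \emph{long} path to amplify its endpoint set $T$ and then hunt for a non-blue edge from $T$ into $V(C_j)$, which, as you note yourself, requires re-deriving a variant of Lemma~\ref{lem_removeblue} with a different termination condition. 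The paper avoids this entirely by making the connection in the opposite direction: break the short cycle $C_j$ at a (preferably blue) edge to get a path $P_j=(w_0,\dots,w_a)$, expose only the $n-o(n)$ pairs $\{(w_a,x): x\in V(C_0)\}$ --- a single vertex's worth of exposures, handled by Claims~\ref{clm_blueindep} and~\ref{clm_manyedges} with no rotation at all --- to find one non-blue edge $(w_a,x)$, and observe that the result is a \emph{path} on $V(C_0)\cup V(P_j)$ from $w_0$ to some vertex of $C_0\cup P_j$. Closing that path is then exactly the statement of Lemma~\ref{lem_removeblue}, applied as a black box with $Y$ the blue-edge endpoints. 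So the adaptation you flag as the principal obstacle is not needed; I would restructure your absorption step accordingly rather than prove a second rotation lemma. Two minor inaccuracies to fix: the blue edges of the absorbed short cycles do enter the path (so the Hamilton cycle you first obtain has up to $k=O(\log n)$ blue edges, not merely those of $P_0$ --- harmless, since the final phase removes them all), and you should invoke Claim~\ref{clm_blueindep_global} (blue edges are vertex-disjoint) to justify that un-reserving the two endpoints of the blue edge currently being removed does not expose the endpoint of another blue edge.
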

\begin{proof}
By Proposition \ref{prop_fiveinout1factor}, we $\whp$ can find a
1-factor, which by Claims \ref{clm_numblue} and
\ref{clm_blueindep_global} contains at most $\log n$ blue edges that
are vertex-disjoint. By Claim \ref{clm_norepeatedblue}, it suffices
to find a Hamilton cycle after compressing the vertices in $B$ from
the 1-factor, since $\whp$ there are no blue edges incident to $B$.
With slight abuse of notation, we may assume that the compressed
graph contains $n$ vertices, and that we are given at least
$\frac{n\log n}{3}$ random edges over this 1-factor. By Lemma
\ref{lem:originalphase2}, by using half of these random edges, we
can find a 1-factor consisting of cycles $C_0, C_1, \cdots, C_t$ so
that $|C_0| = n - o(n)$ and $t = O(\log n)$. Suppose that there are
$k$ blue edges that belong to the 1-factor, for some $k = O(\log
n)$. We still have a set of at least $\frac{n\log n}{6}$
non-revealed edges $E_L$ that we are going to use in Lemma
\ref{lem_removeblue}.

Let $X$ be a set which we will update throughout the process.
Consider the cycle $C_1$. If it contains a blue edge, then remove it
from the cycle to obtain a path $P_1$. Otherwise, remove an
arbitrary edge from $C_1$ to obtain $P_1 = (w_0, w_1, \cdots,
w_{a})$. Expose the set of pairs $\{(w_a, x) : x \in V(C_0),
(w_a,x) \textrm{ is not exposed}\}$ which is of size at least $|C_0|
- |X| - 2k = n-o(n)$. By Claims \ref{clm_blueindep} and
\ref{clm_manyedges}, with probability at least $1- o((\log
n)^{-2})$, the outcome is typical and there exists at least one
non-blue edge of the form $(w_a, x)$ for some $x \in V(C_0)$.
Condition on this event. Note that the set of exposed pairs is a
subset of $Q(\{w_a\})$, and that this gives a path $P$ over the
vertices of $C_0$ and $P_1$, which starts at $w_0$ and ends at some
vertex in $C_1$ (thus $w_a$ is not a endpoint). Add $w_a$ to the set
$X$, and let $Y_1$ be the set of vertices incident to some blue edge
that belongs to $C_0$ or $P_1$. Note that $X$, $Y_1$ are disjoint,
the set of exposed pairs is a subset of $Q(X)$, and neither of the
two endpoints of $P$ belong to $X \cup Y_1$. By applying Lemma
\ref{lem_removeblue} with $X$ and $Y = Y_1$, with probability at
least $1 - o((\log n)^{-1})$, we obtain a cycle that contains all
the vertices of $C_0$ and $C_1$. Moreover, the pairs we further
exposed will be a subset of $Q(Z_1)$ for some set $Z_1$ of size at
most $\frac{n}{\log n \cdot \log \log n}$. Condition on this event
and update $X$ as the union of itself with $Z_1$. Note that by the
definition of $Y_1$, $X$ does not intersect any blue edge of the new
cycle.

Repeat the above for cycles $C_2, C_3, \cdots, C_t$. At each step,
the success probability is $1-o((\log n)^{-1})$, and the size of $X$
increases by at most $1+\frac{n}{\log n \cdot \log \log n} \le
\frac{2n}{\log n \cdot \log \log n}$. Since $t = O(\log n)$, we can
maintain $X$ to have size $o(n)$, and thus the process above indeed
can be repeated. In the end, by the union bound, with probability $1
- o(1)$, we find a Hamiltonian cycle which has at most $k$ blue
edges. Let $Y$ be the vertices incident to the blue edges that
belong to this Hamilton cycle. Note that $|Y| \le 2k$ and $X \cap Y
= \emptyset$. Remove one of the blue edges $(y,z)$ from the cycle to
obtain a Hamilton path. Apply Lemma \ref{lem_removeblue} with the
sets $X$ and $Y \setminus \{y,z\}$ to obtain another Hamilton cycle
with fewer blue edges. Since the total number of blue edges is at
most $k = O(\log n)$, the blue edges are vertex-disjoint, and the
probability of success is at least $1 - o( (\log n)^{-1})$, after
repeating this argument for all the blue edges in the original
cycle, we obtain a Hamilton cycle with no blue edge.
\end{proof}

\section{Concluding Remarks}

In this paper we considered the following natural question. Consider
a random edge process where at each time $t$ a random edge $(u,v)$
arrives. We are to give an on-line orientation to each edge at the
time of its arrival. At what time $t^*$ can one make the resulting
directed graph Hamiltonian? The best that one can hope for is to
have a Hamilton cycle when the last vertex of degree one disappears,
and we prove that this is indeed achievable $\whp$.

The main technical difficulty in the proof arose from the existence
of bud vertices. These were degree-two vertices that were adjacent
to a saturated vertex in the auxiliary graph $\FIVEINOUT$. Note that
for our proof, we used the method of deferred decisions, not
exposing the end-points of certain edges and leaving them as random
variables. Bud vertices precluded us from doing this naively and
forced us to expose the end-point of some of the edges which we
wanted to keep unexposed (it is not difficult to show that without
exposing these endpoints, we cannot guarantee the bud vertices to
have degree at least 2). If one is willing to settle for an
asymptotically tight upper bound on $t^*$, then one can choose
$t^*=(1+\varepsilon)n\log n/2$, and then for $n=n(\varepsilon)$
sufficiently large there are no bud vertices. Moreover, since for
this range of $t^*$, the vertices will have significantly larger
degree, the orienting rule can also be simplified. While not making
the analysis ``trivial" (i.e., an immediate consequence of the work
in \cite{Frieze}), this will considerably simplify the proof.

\medskip

\noindent {\bf Acknowledgement.} We are grateful to Alan Frieze for
generously sharing this problem with us, and we thank Igor Pak for
reference \cite{ErdTur}. We would also like to thank
the two referees for their valuable comments.

\end{document}